\documentclass[12pt]{amsart}
\usepackage[left=1in,top=1in,right=1in,bottom=1in,letterpaper]{geometry}
\usepackage{epsfig}
\usepackage{amsmath}
\usepackage{amssymb}
\usepackage{amscd}
\usepackage{url}
\usepackage{verbatim} 
\usepackage{color}
\usepackage{epsfig}
\usepackage{amsthm}
\usepackage{pifont}

\usepackage{hyperref}

\title{Grid graphs and lattice surfaces}
\author{W. Patrick Hooper}
\thanks{Support was provided by N.S.F. Postdoctoral Fellowship DMS-0803013, N.S.F. Grant DMS-1101233 and a PSC-CUNY Award (funded by The Professional Staff Congress and The City University of New York).}
\address{
The City College of New York\\
New York, NY, USA 10031}
\email{whooper@ccny.cuny.edu}
\date{\today}

\newtheorem{theorem}{Theorem}
\newtheorem{proposition}[theorem]{Proposition}
\newtheorem{lemma}[theorem]{Lemma}
\newtheorem{remark}[theorem]{Remark}
\newtheorem{corollary}[theorem]{Corollary}

\newtheorem{question}[theorem]{Question}

\theoremstyle{definition}


\def\C{\mathbb{C}}%
\def\N{\mathbb{N}}%
\def\P{\mathbb{P}}%
\def\Q{\mathbb{Q}}%
\def\R{\mathbb{R}}%
\def\Z{\mathbb{Z}}%
%

\def\GL{\textit{GL}}
\def\SL{\textit{SL}}
\def\SO{\textit{SO}}
\def\PGL{\textit{PGL}}
\def\PSL{\textit{PSL}}

%
%
%

\renewcommand{\d}[1]{\ensuremath{\textit{d$#1$}}}%
%
%
%

\def\Moduli{{\mathcal M}} 


%


%


%
%

%
%
\def\dev{\textit{dev}}%
\def\H{\mathbb H}%

%

\def\isomS1{\textit{Isom}_+(S^1)}
\def\rt3{\sqrt{3}}

\def\tr{\mathrm{tr}} 

\def\T{\mathcal{T}}

\def\Isom{\mathrm{Isom}}




\newtheorem{convention}{Convention}

\makeatletter
\def\@strippedMR{}
\def\@scanforMR#1#2#3\endscan{%
  \ifx#1M\ifx#2R\def\@strippedMR{#3}%
  \else\def\@strippedMR{#1#2#3}%
  \fi\fi}

\def\Alpha{{\mathcal A}}
\def\Beta{{\mathcal B}} 
\def\n{{\mathfrak n}} 
\def\e{{\mathfrak e}} 
\def\E{{\mathcal E}} 
\def\G{{\mathcal G}} 
\def\H{{\mathbb H}} 
\def\V{{\mathcal V}} 

\def\Aff{\textit{Aff}}
\def\til{\widetilde}
\def\sC{{\mathcal C}}

\newcommand{\compat}[1]{{\textcolor{red}{\bf #1}}}

\begin{document}
\begin{abstract}
First, we apply Thurston's construction of pseudo-Anosov homeomorphisms to grid graphs and obtain translation surfaces whose Veech groups are commensurable to $(m,n,\infty)$ triangle groups. These surfaces were first discovered by Bouw and M\"oller, however our treatment of the surfaces differs. We construct these surfaces by gluing together polygons in two ways. We use these elementary descriptions to compute the Veech groups, resolve primitivity questions, and describe the surfaces
algebraically. Second, we show that some $(m,n, \infty)$ triangle groups can not arise as Veech groups. This generalizes work of Hubert and Schmidt.
\end{abstract}
\maketitle

\section{Introduction}

A {\em translation surface} $(X,\omega)$ is a closed Riemann surface $X$ equipped with a non-zero holomorphic $1$-form $\omega$. 
There is a well known action of $\GL(2,\R)$ on the moduli space of all translation surfaces. 
We use $\GL(X,\omega) \subset \GL(2,\R)$ to denote the subgroup of all $A \in \GL(2, \R)$ for which $A(X, \omega)=(X,\omega)$.
By area considerations, the determinant of an element $A \in \GL(X, \omega)$ must be $\pm 1$. 
The {\em Veech group} of $(X, \omega)$ is the group $\SL(X, \omega)=\GL(X, \omega) \cap \SL(2, \R)$. We define 
$\PGL(X, \omega)$ and $\PSL(X, \omega)$ to be the projections of these groups to $\PGL(2, \R)$ and $\PSL(2, \R)$, respectively.
We say $(X,\omega)$ has the {\em lattice property} if the Veech group has finite co-volume in $\SL(2,\R)$. 

Interest in these objects is sparked by connections with Teichm\"uller theory. See \cite[\S 2.3]{MT}, for instance.
If $(X, \omega)$ is a translation surface, there is a totally geodesic isometric immersion of $\H^2/\PSL(X, \omega)$ into ${\mathcal M}_g$, the
moduli space of surfaces of genus $g=\textit{genus}(X)$ equipped with the Teichm\"uller metric. 

The {\em $(m,n, \infty)$ triangle group} is the group 
$$\langle a, b, c~:~a^2=b^2=c^2=(ac)^m=(bc)^n=e \rangle.$$
This group can be realized as a subgroup $\Delta(m,n, \infty) \subset \PGL(2, \R)=\Isom(\H^2)$
generated by reflections in the sides of a hyperbolic triangle with one ideal vertex and two angles of $\pi/m$ and $\pi/n$.
We use $\Delta^+(m,n,\infty)$ to refer to the orientation preserving part,
$\Delta(m,n,\infty) \cap \PSL(2, \R)$. 

We will describe translation surfaces $(X, \omega)$ for which the group $\PGL(X, \omega)$ is conjugate to $\Delta(m,n, \infty)$ or an index two subgroup of $\Delta(m,n, \infty)$. 
In one sentence, these surfaces are constructed by applying Thurston's construction of pseudo-Anosov homeomorphisms to grid graphs. 
Sections \ref{sect:thurston_veech} and \ref{sect:results} explain.
We reprove the following theorem of Bouw and M\"oller \cite{BM}.

\begin{theorem}[Veech triangle groups]
\label{thm:veech_triangle_groups}
Let $m$ and $n$ be integers satisfying $2 \leq m < n < \infty$. 
\begin{itemize}
\item
If $m$ and $n$ are not both even, then there is a translation surface for which
$\PGL(X, \omega)$ is conjugate to $\Delta(m,n, \infty)$. 
\item When $m$ and $n$ are even, there is a translation surface for which
$\PGL(X, \omega)$ is conjugate to an index two subgroup of $\Delta(m,n, \infty)$. 
In this case, $\H^2/\PSL(X, \omega)$ is a hyperbolic $(m/2,n/2,\infty, \infty)$-orbifold. 
\end{itemize}
\end{theorem}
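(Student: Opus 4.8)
The plan is to follow the route advertised in the introduction: build $(X,\omega)$ by Thurston's construction applied to the grid graph $G_{m,n}$, read off a pair of transverse parabolic directions together with a finite-order symmetry from the combinatorics, and then identify the group these generate. First I would assemble the Thurston--Veech data. The grid graph is bipartite, so its two color classes index a pair of filling multicurves, equivalently two transverse cylinder decompositions of a translation surface assembled from one rectangle per edge. The two multitwists supported on these decompositions are affine automorphisms of $(X,\omega)$; taking the cylinder moduli proportional to the Perron--Frobenius eigenvector of the weighted adjacency operator of $G_{m,n}$, their derivatives become a horizontal and a vertical shear whose common shearing parameter is governed by the Perron--Frobenius eigenvalue $\lambda$. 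This exhibits two parabolic elements of $\SL(X,\omega)$ and matches the promise of describing $(X,\omega)$ by gluing polygons ``in two ways,'' one decomposition per color class.

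Next I would compute $\lambda$ in closed form. Since a grid graph is a Cartesian product of two paths, its spectrum is the set of pairwise sums of the two path spectra, so every eigenvalue has the shape $2\cos(j\pi/p)+2\cos(k\pi/q)$ for suitable integers; the top eigenvalue then yields an explicit shear. The decisive input, however, is the symmetry of the grid. The dihedral symmetries of the rectangular array are realized by affine automorphisms of $(X,\omega)$, and an appropriate one $S$ interchanges the two cylinder decompositions; conjugating a multitwist by $S$ produces the opposite shear, and suitable products of $S$ with the two shears are \emph{elliptic}, precisely because the relevant traces --- controlled by the eigenvalue $\lambda$ just computed --- lie in $(-2,2)$. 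The heart of the argument is to verify that the cosine identities force these traces to equal $2\cos(\pi/m)$ and $2\cos(\pi/n)$, so that the elliptic elements have orders exactly $m$ and $n$ and their product is parabolic. This is what pins the group to $\Delta^+(m,n,\infty)$ rather than to a merely commensurable neighbor.

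With explicit generators and their orders in hand, the lower bound is immediate: the two elliptics and the parabolic generate a group conjugate to $\Delta^+(m,n,\infty)$, and adjoining the orientation-reversing graph symmetry --- when it is realized by an affine map --- upgrades $\PGL(X,\omega)$ to all of $\Delta(m,n,\infty)$. For the upper bound I would use that triangle groups are lattices in $\PSL(2,\R)$: since $\SL(X,\omega)$ is discrete and already contains a finite-covolume subgroup, it is itself a lattice, so $(X,\omega)$ has the lattice property and the periodic directions found above represent all cusps up to the group. Maximality of $\Delta(m,n,\infty)$ among Fuchsian groups, or equivalently a direct comparison of the cone points and cusps of the quotient orbifold against the affine symmetries actually carried by $(X,\omega)$, then forces equality and excludes unexpected automorphisms.

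Finally the parity dichotomy. When $m$ and $n$ are not both even, the full dihedral symmetry, including the completing reflection, is realized affinely and one recovers all of $\Delta(m,n,\infty)$; when both are even, that reflection fails to be realized as an orientation-reversing affine automorphism --- the eigenvector weighting is no longer symmetric in the way required --- and only the index-two subgroup survives. I would identify its quotient as the $(m/2,n/2,\infty,\infty)$-orbifold by exhibiting its two inequivalent cusps and its two cone points of orders $m/2$ and $n/2$. I expect the genuine obstacle to be the upper bound: showing that $(X,\omega)$ carries \emph{no} affine symmetries beyond those already named --- so that the Veech group is the triangle group on the nose rather than merely commensurable with it --- while at the same time tracking the parity finely enough to obtain the precise index and orbifold type.
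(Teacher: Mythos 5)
The construction you begin from is the paper's own (Thurston's construction applied to $\G_{m,n}$, with Perron--Frobenius eigenvalue $\lambda=2\cos\frac{\pi}{m}+2\cos\frac{\pi}{n}$, matching equation \ref{eq:eigenfunction}), but the step that is supposed to produce the elliptic elements of orders $m$ and $n$ is not just unproved --- it is provably impossible by the route you describe, and supplying a substitute is precisely the hard part of the paper. Every symmetry of the rectangular array induces an isometry of $(X_{m,n},\omega_{m,n})$ permuting the rectangles, so its derivative lies in the order-$8$ dihedral group $D_8=\langle C,E\rangle$ of equation \ref{eq:matrices}, while the two multitwists have derivatives $P_0=\left[\begin{smallmatrix}1&\lambda\\0&1\end{smallmatrix}\right]$ and $Q_0=\left[\begin{smallmatrix}1&0\\-\lambda&1\end{smallmatrix}\right]$ of equation \ref{eq:standard_matrices}. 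For $3\le m<n$ one has $\lambda\ge 1+\sqrt2>2$, so by the classical ping-pong (Sanov) criterion $\langle P_0,Q_0\rangle$ is free of rank two, hence torsion-free. Conjugation by $D_8$ permutes $\{P_0^{\pm1},Q_0^{\pm1}\}$ (for instance $EP_0E=P_0^{-1}$ and $CP_0C=Q_0^{-1}$), so $D_8$ normalizes this free group, and every finite subgroup of the group generated by everything available in your outline injects into $D_8$, projectively into $D_8/\{\pm I\}\cong\Z/2\times\Z/2$. Hence every elliptic element of that group has order at most $2$ in $\PSL(2,\R)$: no ``suitable product of $S$ with the two shears'' is ever elliptic of order $m$ or $n$, and the traces $2\cos\frac{\pi}{m}$, $2\cos\frac{\pi}{n}$ never occur (an orientation-preserving product of a $D_8$-element with a power of a shear has trace $\pm2$ or $\pm k\lambda$ with $|k\lambda|>2$; longer words are covered by the torsion argument just given). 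Your plan does succeed when $m=2$, where $\lambda=2\cos\frac{\pi}{n}<2$ --- but that is exactly Veech's original $(2,n,\infty)$ case, and it does not generalize.

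What is missing is the paper's central discovery: the surface admits polygonal presentations invisible in the rectangle picture. Theorem \ref{thm:semiregular} produces affine homeomorphisms $\mu:(X_{m,n},\omega_{m,n})\to(Y_{m,n},\eta_{m,n})$ and $\nu:(X_{m,n},\omega_{m,n})\to(Y_{n,m},\eta_{n,m})$ onto surfaces glued from semiregular $2n$-gons and $2m$-gons respectively; the Euclidean dihedral symmetries of those decompositions pull back to $B=D(\mu)^{-1}ED(\mu)$, $A=D(\nu)^{-1}ED(\nu)$ and $C=-D(\mu)^{-1}Y_nD(\mu)$, which satisfy $(AC)^m=(BC)^n=-I$ with $AB$ parabolic. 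That is where the order-$m$ and order-$n$ elliptics come from; your reading of ``gluing polygons in two ways'' as meaning the two cylinder directions of one rectangle decomposition misses this second structure entirely, and proving Theorem \ref{thm:semiregular} occupies Section \ref{sect:semiregular}. Secondarily, your upper bound via maximality of $\Delta(m,n,\infty)$ cannot work in the case where $m$ and $n$ are both even: there the Veech group is a \emph{proper} index-two subgroup, so one must show the full triangle group is not realized. The paper does this by observing that the numbers of horizontal and vertical cylinders then differ (so no affine automorphism exchanges the two cusps) and by comparing orbifold Euler characteristics of $\H^2/\Gamma_{m,n}$ and $\H^2/\SL(X_{m,n},\omega_{m,n})$ --- and it is this Euler characteristic comparison, not maximality, that rules out unexpected affine automorphisms in every case.
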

A more detailed restatement of this theorem which also covers the case $m=n$ is provided by theorem \ref{thm:veech_groups}.
The surfaces we describe are the same as the surfaces constructed in \cite{BM}. (See Corollary \ref{cor:same} and the discussion below this corollary.)
What sets this work apart is a concrete treatment of surfaces with these Veech groups.
By applying Thurston's construction to grid graphs, we obtain a description of these surfaces by gluing together rectangles. 
We also provide a description of the surfaces in terms of a Riemann surface and a holomorphic $1$-form in all cases.
This was provided in \cite{BM} in the cases when $m$ and $n$ are relatively prime.

Our concrete treatment of these surfaces enabled us to discover finer information about these surfaces. See further below. Perhaps the following is more surprising.

\begin{theorem}[Non-Veech triangle groups]
\label{thm:non_veech_triangle_groups}
Let $m$ and $n$ be even integers, and let $\gamma=\gcd(m,n)$.
Under either the assumption that $m/\gamma$ and $n/\gamma$ are both odd or that $\gamma=2$, 
there is no translation surface for which $\Delta^+(m,n,\infty) \subset \PSL(X, \omega)$.
\end{theorem}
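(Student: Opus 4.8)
The plan is to argue by contradiction. Suppose $(X,\omega)$ is a translation surface with $\Delta^+(m,n,\infty)\subseteq\PSL(X,\omega)$. Since $\Delta^+(m,n,\infty)$ is a non-cocompact lattice and a discrete subgroup of $\PSL(2,\R)$ containing a lattice is again a lattice, $(X,\omega)$ has the lattice property; I will not need to pin down $\PSL(X,\omega)$ beyond the given containment. The key point is that the two elliptic generators of $\Delta^+(m,n,\infty)$ lift to $\SL(X,\omega)$ and are therefore derivatives of affine automorphisms. Writing $u$ and $v$ for the generators of orders $m$ and $n$ with $uv$ parabolic, I obtain affine automorphisms $\phi,\psi$ with $D\phi$ conjugate to the rotation $R_{\pi/m}$ and $D\psi$ conjugate to $R_{\pi/n}$. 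Because $(D\phi)^m=-I$, the involution $-I$ lies in $\SL(X,\omega)$; this is the first place the evenness of $m$ and $n$ is visible.

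First I would extract the arithmetic. The eigenvalues of $D\phi$ are the primitive $2m$-th roots of unity $e^{\pm i\pi/m}$, and via the period (holonomy) map they occur among the eigenvalues of $\phi^*$ on $H^1(X;\C)$; as $\phi^*$ is an integral symplectic matrix, $\Phi_{2m}$ divides its characteristic polynomial, so $\varphi(2m)\le 2g$ and the trace field contains $2\cos\tfrac{\pi}{m}$, and likewise $2\cos\tfrac{\pi}{n}$. Thus the trace field contains $\Q(\cos\tfrac{\pi}{m},\cos\tfrac{\pi}{n})$, a totally real field whose degree is strictly larger --- because $m,n$ are even --- than that of $\Q(\cos\tfrac{2\pi}{m},\cos\tfrac{2\pi}{n})$, the field attached to the $(m/2,n/2,\infty,\infty)$ surface of Theorem \ref{thm:veech_triangle_groups}. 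Next I would normalize $(X,\omega)$ by an element of $\SL(2,\R)$ so that $D\phi=R_{\pi/m}$ is a genuine rotation; then $\phi$ is a holomorphic automorphism of $X$ with $\phi^*\omega=e^{i\pi/m}\omega$, so $\omega$ is an eigenform and $\mathrm{Jac}(X)$ acquires an action of $\Z[\zeta_{2m}]$ compatible with the Hodge decomposition.

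The heart of the argument is then a parity obstruction. Using the explicit Thurston--Veech / grid-graph description underlying Theorem \ref{thm:veech_triangle_groups}, the orders of the zeros of $\omega$, and hence the rotation numbers of $\phi$ at its fixed points, are determined by the $(m,n,\infty)$ orbifold data. Feeding these into the Chevalley--Weil formula computes the multiplicity of each primitive $2m$-th root of unity in $H^{1,0}(X)$, and I would show that under either hypothesis --- $m/\gamma$ and $n/\gamma$ both odd, or $\gamma=2$ --- this multiplicity cannot be a nonnegative integer; equivalently, the order-$m$ holomorphic symmetry refining the order-$m/2$ symmetry of the constructed surface is obstructed. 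Concretely, the root of unity $e^{i\pi(1/m+1/n)}$ governing the parabolic product $uv$ has a $2$-adic order that, in exactly these two cases, is incompatible with the eigenform relation and the real-multiplication structure.

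The main obstacle is this last step: carrying out the Chevalley--Weil / fixed-point bookkeeping precisely, identifying the exact order of $e^{i\pi(1/m+1/n)}$, and verifying that the two stated hypotheses on $(m,n)$ are the sharp conditions forcing the contradiction (consistent with Theorem \ref{thm:veech_triangle_groups}, which realizes only the index-two subgroup in all even cases). A secondary point is that one cannot simultaneously normalize both $\phi$ and $\psi$ to honest rotations, so the eigenform information attaches to one elliptic at a time; reconciling this with the constraint coming from the parabolic $uv$ is precisely where the evenness hypotheses must be used in full.
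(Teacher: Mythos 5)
There is a genuine gap, and it sits exactly where you acknowledge it: the ``parity obstruction'' is never produced, and the inputs you propose to feed it are not available. The theorem quantifies over \emph{all} translation surfaces, so the hypothetical $(X,\omega)$ with $\Delta^+(m,n,\infty)\subset\PSL(X,\omega)$ need not be related to the grid-graph surfaces of theorem \ref{thm:veech_triangle_groups}: neither its genus, nor the orders of the zeros of $\omega$, nor the rotation numbers of your automorphism $\phi$ at its fixed points are determined by the Veech group containment alone, so the Chevalley--Weil bookkeeping cannot even be set up without first classifying all candidate surfaces. Moreover, the field-theoretic remark you lean on is misleading: the relevant invariant field is not $\Q(\cos \frac{2\pi}{m}, \cos \frac{2\pi}{n})$ but the invariant trace field $k\Gamma_{m,n}=\Q(\cos \frac{2\pi}{m}, \cos \frac{2\pi}{n}, \cos \frac{\pi}{m}\cos \frac{\pi}{n})$ (note the product term), and for many even pairs --- e.g.\ $(m,n)=(4,8)$, where $\gamma=4$ and $n/\gamma$ is even --- this field \emph{equals} $\Q(\cos\frac{\pi}{m},\cos\frac{\pi}{n})$. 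So evenness alone creates no discrepancy; deciding exactly when the two fields differ is a nontrivial Galois-theoretic computation whose answer is precisely the hypothesis of the theorem ($\gamma=2$, or $m/\gamma$ and $n/\gamma$ both odd). An argument that only ``sees'' evenness would prove too much and contradict the open question stated after theorem \ref{thm:non_veech_triangle_groups}.

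For contrast, the paper's proof needs no structure theory of the hypothetical surface and runs entirely through trace fields. Since $\Delta^+(m,n,\infty)$ is a lattice, it has finite index in the discrete group $\Gamma=\PSL(X,\omega)$, so the two groups share the same invariant trace field. The theorem of Kenyon--Smillie (theorem \ref{thm:KS}) says $\Gamma$ is conjugate into $\PSL\big(2,\Q(\tr~A)\big)$ for any single hyperbolic $A\in\Gamma$; applying this with $A=B^2$ gives $\Q(\tr~\Gamma)\subset\Q(\tr~B^2)\subset k\Gamma$, i.e.\ $\Q(\tr~\Gamma)=k\Gamma$ (lemma \ref{lem:trace_field}). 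This is an \emph{upper} bound on the trace field of a Veech group --- the direction your cohomological eigenvalue argument (a lower bound) cannot supply. The cyclotomic computation of lemma \ref{lem:triangle_groups_fields} then shows that under the stated hypotheses the triangle group itself has $\Q(\tr~\Gamma_{m,n})\neq k\Gamma_{m,n}$, a contradiction. If you want to salvage your approach, the salvageable part is the trivial observation that the elliptics force $\cos\frac{\pi}{m},\cos\frac{\pi}{n}$ into the trace field; what must be added is the Kenyon--Smillie rigidity bounding the trace field from above, not Chevalley--Weil.
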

This theorem contradicts a claim that appeared in a version of \cite{BM}. 
Hubert and Schmidt remarked a proof of this theorem in the special case when $m=2$ \cite[remark 7]{HS01}.

The following seems to be a very interesting open question.

\begin{question}
For $m$ and $n$ even and not satisfying the conditions of theorem \ref{thm:non_veech_triangle_groups}, is there a translation surface 
for which $\Delta^+(m,n,\infty) \subset \PSL(X, \omega)$? Can the $(m,n,\infty)$ orbifold be isometrically immersed in ${\mathcal M}_g$
for some $g$?
\end{question}

We now return to a discussion of the translation surfaces constructed to prove theorem \ref{thm:veech_triangle_groups}. 
New examples of surfaces with the lattice property can be constructed by carefully taking branched covers of
existing examples. 
A translation surface is {\em primitive} if it does not arise as a branched covering of a translation surface of smaller genus. 
A lattice $\Gamma \subset \SL(2, \R)$  is {\em arithmetic} if it contains a finite index subgroup which can be
conjugated into $\SL(2, \Z)$. If $\SL(X, \omega)$ is arithmetic, then $(X, \omega)$ is a cover of a torus, branched at one point. See theorem \ref{thm:gutkin-judge} and \cite{GJ00}.

As mentioned above, we will explicitly construct translation surfaces for which $\PGL(X, \omega)$ is commensurable to $\Delta(m,n, \infty)$. 
We will explicitly compute the Veech group of these surfaces, and show that these surfaces are primitive whenever their Veech groups are non-arithmetic. 
There are only three arithmetic $(m,n,\infty)$ triangle groups with $2 \leq m < n < \infty$.
In \cite{BM}, it was shown that these surfaces are primitive when $m$ and $n$ are relatively prime.

\subsection{Structure of paper}

We give some background on the problems described above in the next section. The major tool we use is the Thurston's construction of pseudo-Anosov homeomorphisms,
which we describe in \S \ref{sect:thurston_veech}. 

We give concrete descriptions of translation surfaces which meet the criteria of theorem \ref{thm:veech_triangle_groups} in \S \ref{sect:results}.
In \S \ref{sect:bm}, we apply the Thurston's construction to grid graphs to build these translation surfaces. 
Questions about topology, the Veech groups, arithmeticity, and primitivity are also answered in this section.
In \S \ref{sect:semiregular_background}, we describe affinely equivalent surfaces built from ``semiregular polygons.'' 
This description of these surfaces was independently discovered by Ronen Mukamel.
This point of view gives us algebraic formulas for the Riemann surface and holomorphic $1$-form in each case.
Sections \ref{sect:semiregular} through \ref{sect:formulas} are devoted to proving these facts. We outline the 
structure of these sections in \S \ref{sect:outline}.

We prove theorem \ref{thm:non_veech_triangle_groups} in section \ref{sect:not_veech}. 

\section{Background}
\label{sect:1}
\label{sect:background}

\subsection{Translation surfaces and the lattice property}
A {\em translation surface} $(X,\omega)$ is a Riemann surface $X$ equipped with a non-zero holomorphic $1$-form, $\omega$. The $1$-form $\omega$ 
provides local charts from $X$ to $\C$ defined up to translation, away from the zeros of $\omega$. 
At a zero, we have a chart to the Riemann surface $w=z^{k+1}$, where $k$ is the order of the zero.
A translation surface inherits a singular Euclidean metric by pulling back the Euclidean metric via the charts. 
From this point of view, a zero of order $k$ of $\omega$ yields
a cone singularity with cone angle $2 \pi (k+1)$.
In particular, we may equivalently think of
a translation surface as a finite union of polygonal subsets of $\C$ with edges glued in pairs by translations. 
Cone singularities may appear at the equivalence class of a vertex of a polygon.
Also, a translation surface inherits a notion of {\em direction} from $\C$. This is just the map which sends a tangent vector on $X$ to its image vector in $\C$ under a chart.
Note that the direction of a tangent vector is invariant under the geodesic flow in this metric.

Let $(X,\omega)$ and $(X_0,\omega_0)$ be translation surfaces. A homeomorphism $f:X \to X_0$ is called {\em affine} if it preserves the underlying affine structures. That is, there are real numbers $a,b,c,d$
such that on local charts
$f(x+iy)=(ax+by)+i(cx+dy)$.
We use $f:(X, \omega) \to (X_0, \omega_0)$ to denote such a map.
An {\em affine automorphism} of $(X,\omega)$ is an affine homeomorphism $f:(X, \omega) \to (X, \omega)$. 
Since $f$ must preserve area we have $ad-bc=\pm 1$. The collection of all affine automorphisms forms the {\em affine automorphism group}, $\Aff(X,\omega)$. 
The {\em derivative map} $D:\Aff(X, \omega) \to \GL (2,\R)$ recovers the action of an affine automorphism on local charts. That is,
$$D:f \mapsto \left[\begin{array}{rr} a & b \\ c & d \end{array}\right].$$
We use $\GL(X, \omega)$ to denote $D\big(\Aff(X,\omega)\big) \subset \GL(2, \R)$.
The orientation preserving part of $\GL(X, \omega)$
is the {\em Veech group}, $\SL(X, \omega)=D\big(\Aff(X,\omega)\big)=\SL(2,\R) \cap \GL(X,\omega)$. 
As in the introduction, we use $\PGL(X, \omega)$ and $\PSL(X, \omega)$ to denote the projectivizations
of these groups.

It is a theorem of Veech that $\SL(X,\omega)$ is discrete and is never co-compact \cite{V}. We say that $(X,\omega)$ has the {\em lattice property}
if $\SL(X,\omega)$ has finite co-volume in $\SL(2,\R)$. 

Interest in the lattice property arises from Teichm\"uller theory. Consider the hyperbolic plane, 
${\mathbb H}^2=\SO(2,\R) \setminus \SL(2,\R)$. 
The quotient ${\mathbb H}^2/\PSL(X,\omega)$ naturally immerses into the moduli space of complex structures on a surface with the genus of $X$. 
In fact, if we consider the related topic of {\em half-translation surfaces} (Riemann surfaces paired with a holomorphic quadratic differential),
every complete, finite area, totally geodesic subsurface of $\Moduli_g$ is isometric to ${\mathbb H}^2/\PSL(X,\omega)$,
for some naturally related half-translation surface $(X,\omega)$ with the lattice property \cite{MT}.
Veech found the first examples of translation surfaces with the lattice property \cite{V}.
Since then, there has been interest in finding more examples and classifying these objects. 
See \cite{KS}, \cite{McM03}, \cite{Calta04}, \cite{BM}, and \cite{McM06} for instance.

Let $(X,\omega)$ and $(X_0, \omega_0)$ be translation surfaces. 
A {\em covering} is a surjective (possibly branched) holomorphic map $f:X \to X_0$ for which the pullback form $f^\ast(\omega_0)$ equals $\omega$. 
$(X,\omega)$ is called {\em primitive} if it does not cover a surface of smaller genus. 
The following theorem of M\"oller answered a question of Hubert and Schmidt in \cite{HS01}. See
\cite[theorem 2.6]{Mo06} or \cite[\S 2]{McM06}. 

\begin{theorem}[M\"oller]
\label{thm:primitive}
Every translation surface $(X,\omega)$ covers a primitive translation surface $(X_0, \omega_0)$. 
If the genus of $X_0$ is greater than $1$, then this 
covering is unique and $\SL(X,\omega)$ is a subgroup of $\SL(X_0, \omega_0)$. 
\end{theorem}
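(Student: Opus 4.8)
The plan is to treat the three assertions in turn. Write $g(Y)$ for the genus of a surface $Y$ and recall that a translation covering $f\colon(Y,\eta)\to(W,\xi)$ is a surjective branched holomorphic map with $f^\ast\xi=\eta$; away from the cone points it is then a local isometry of the flat metrics, so in aligned translation charts it is literally the identity. Since $\eta\neq 0$ is holomorphic we always have $g\geq 1$, and Riemann--Hurwitz, $2g(Y)-2=d\big(2g(W)-2\big)+\sum_P(e_P-1)$, shows that genus is non-increasing under coverings. \emph{Existence} follows at once by minimality: the genera of the translation surfaces covered by $(X,\omega)$ form a nonempty subset of $\{1,\dots,g(X)\}$ and attain a minimum $g_0$; a covered surface $(X_0,\omega_0)$ with $g(X_0)=g_0$ must be primitive, because a covering $X_0\to X_1$ with $g(X_1)<g_0$ would compose with $X\to X_0$ to exhibit $(X_1,\omega_1)$ as covered by $(X,\omega)$, contradicting minimality.

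The heart of the matter is \emph{uniqueness}, and here I would prove the stronger statement that the family of surfaces covered by $(X,\omega)$ is \emph{downward directed}: given $f_1\colon X\to X_1$ and $f_2\colon X\to X_2$, there is a translation surface $Q$ covered by both $X_1$ and $X_2$ and fitting into a commuting square with $f_1,f_2$. Granting this, suppose $g_0>1$ and that $(X,\omega)$ covers two primitive surfaces $(X_1,\omega_1)$ and $(X_2,\omega_2)$ of genus $g_0$. Their common quotient $Q$ is covered by $X_1$, so $g(Q)\leq g_0$, and primitivity of $X_1$ forbids $g(Q)<g_0$; hence $g(Q)=g_0\geq 2$. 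The covering $X_1\to Q$ then has equal genera at both ends, so Riemann--Hurwitz with $2g_0-2>0$ forces degree $1$ and no ramification, i.e.\ an isomorphism; the same holds for $X_2\to Q$. Thus $X_1\cong Q\cong X_2$, which gives uniqueness of the primitive quotient up to isomorphism.

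Constructing $Q$ is the step I expect to be the main obstacle. Formally $Q$ is the pushout of $X_1\xleftarrow{f_1}X\xrightarrow{f_2}X_2$, namely $X_1\sqcup X_2$ modulo the equivalence relation generated by $f_1(x)\sim f_2(x)$ for $x\in X$; the content is to show this quotient is genuinely a translation surface with $X_1\to Q$ and $X_2\to Q$ branched coverings pulling back its $1$-form. Here I would exploit that $f_1$ and $f_2$ are local isometries: away from the finite singular set each point of $X$ has a neighborhood on which both $f_i$ are chart-identities, so the induced identifications on $X_1$ and $X_2$ are locally modeled on deck-type translations and the flat charts descend to $Q$. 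The genuine work is verifying that the identified space is Hausdorff and that the induced maps are proper finite branched coverings rather than merely local homeomorphisms; this is precisely where the argument must be made with care, and it is what makes the hypothesis $g_0>1$ (as opposed to the elastic genus-$1$ case) do real work through Riemann--Hurwitz above.

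Finally, the containment $\SL(X,\omega)\subseteq\SL(X_0,\omega_0)$ follows formally from uniqueness of the target together with the functoriality of the $\GL^+(2,\R)$-action on coverings. Let $A\in\SL(X,\omega)$, realized by some $\phi\in\Aff(X,\omega)$ with $D\phi=A$. Applying $A$ to $f\colon(X,\omega)\to(X_0,\omega_0)$ yields a translation covering of $A\cdot(X,\omega)$ onto $A\cdot(X_0,\omega_0)$, which is again primitive of genus $g_0>1$. Composing with the translation isomorphism between $(X,\omega)$ and $A\cdot(X,\omega)$ supplied by $\phi$ exhibits $A\cdot(X_0,\omega_0)$ as a primitive, genus-$g_0$ quotient of $(X,\omega)$; by uniqueness it is translation-isomorphic to $(X_0,\omega_0)$. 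Since $A\cdot(X_0,\omega_0)\cong(X_0,\omega_0)$ as translation surfaces is exactly the condition $A\in\SL(X_0,\omega_0)$, this completes the argument.
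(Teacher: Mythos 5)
A preliminary remark: the paper does not prove this theorem at all --- it is quoted from M\"oller and McMullen (see \cite{Mo06} and \cite[\S 2]{McM06}) --- so your proposal can only be judged as a standalone proof, not against an in-paper argument. Judged that way, its outer skeleton is sound: the existence argument by minimizing genus (using Riemann--Hurwitz to see genus is non-increasing under translation coverings) is correct; the deduction of uniqueness from ``directedness'' via Riemann--Hurwitz in genus $\geq 2$ is correct (modulo the harmless slip of assuming both primitive quotients have the same genus, which your own argument repairs); and the deduction of $\SL(X,\omega)\subseteq\SL(X_0,\omega_0)$ from uniqueness is the standard, correct argument.

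The genuine gap is the construction of the common quotient $Q$, the step you yourself call ``the main obstacle.'' This is not a technical verification to be deferred --- it is the entire mathematical content of the theorem. Indeed, directedness is essentially equivalent to the theorem: granting the theorem, any two quotients $X_1,X_2$ of $X$ both cover the primitive quotient of $X$, so you have reduced the statement to an equivalent unproved statement. Concretely, your pushout fails at the first hurdle: the equivalence relation on $X$ generated by the two fiber relations ($x\sim x'$ if $f_1(x)=f_1(x')$ or $f_2(x)=f_2(x')$) has no visible reason to have finite classes --- alternately saturating a point under $f_1$-fibers and $f_2$-fibers may grow without bound, and the local-isometry property of the $f_i$ does nothing to stop this. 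If a class is infinite, the quotient is not Hausdorff and not a surface, so Riemann--Hurwitz can never be invoked; in particular the hypothesis $g_0>1$, which you say ``does real work through Riemann--Hurwitz,'' in fact does no work at all in your construction, whereas in the actual proofs it is precisely what produces finiteness. The cited sources obtain this by non-elementary means: a translation covering $f\colon(X,\omega)\to(Y,\eta)$ determines a subspace $f^*H^1(Y;\Q)\subset H^1(X;\Q)$ containing the class of $\omega$, equivalently an abelian subvariety of $\operatorname{Jac}(X)$ on whose homology $[\omega]$ vanishes; the common quotient of two coverings is then manufactured inside a quotient of $\operatorname{Jac}(X)$ by the abelian subvariety generated by the two kernels (on which $[\omega]$ still vanishes), with $Q$ arising as (the normalization of) the image of $X$ there. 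Compactness and finiteness then come from algebraic geometry --- for instance finiteness of automorphism groups of curves of genus $\geq 2$ --- not from point-set topology. Without an argument of this kind, or some other mechanism forcing finiteness of the identification classes, your proof does not go through.
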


We now concentrate on the special case when $(X, \omega)$ covers a torus $(X_0, \omega_0)$. Two subgroups $\Gamma_1, \Gamma_2 \subset \SL(2,\R)$ 
are {\em commensurable} if there are finite index subgroups $G_1 \subset \Gamma_1$ and $G_2 \subset \Gamma_2$ which are conjugate in $\SL(2, \R)$. 
A subgroup of $\SL(2, \R)$ is {\em arithmetic} if it is commensurable to $\SL(2, \Z)$. We have the following theorems about surfaces with arithmetic
Veech groups.

\begin{theorem}[Gutkin-Judge {\cite{GJ00}}]
\label{thm:gutkin-judge}
Let $(X,\omega)$ be a translation surface with the lattice property. The following are equivalent.
\begin{enumerate}
\item $(X,\omega)$ is a branched cover of a torus. 
\item $\SL(X, \omega)$ is arithmetic.
\end{enumerate}
\end{theorem}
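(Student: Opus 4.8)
The plan is to prove the two implications separately, in each case after a harmless normalization by an element of $\GL(2,\R)$, under which both ``covers a torus'' and ``is commensurable to $\SL(2,\Z)$'' are invariant.

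For the implication (1) $\Rightarrow$ (2), I would first rescale the target torus $(X_0,\omega_0)$ so that it becomes $\C/\Z^2$ with $\omega_0=dz$. Since the covering $f:X \to X_0$ satisfies $f^\ast(\omega_0)=\omega$, it is a translation in the charts of $\omega$, and hence exhibits $(X,\omega)$ as a surface tiled by finitely many unit squares, say $N$ of them. The first claim is that $\SL(X,\omega) \subseteq \SL(2,\Z)$: an affine automorphism $\phi$ with derivative $A$ induces $\phi_\ast$ on $H_1(X;\Z)$, and the period map intertwines $A$ with $\phi_\ast$; since the absolute periods of $\omega$ span $\Z^2$ (or a finite-index sublattice, which changes nothing up to commensurability), $A$ preserves this lattice and so lies in $\SL(2,\Z)$. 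The second claim is finiteness of the index: $\SL(2,\Z)$ acts on the finite set of $N$-square tilings up to relabeling, and the stabilizer of our tiling is exactly $\SL(X,\omega)$, which is therefore a finite-index, hence arithmetic, subgroup.

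For the implication (2) $\Rightarrow$ (1), conjugate so that $\SL(X,\omega)$ contains a finite-index subgroup $G \subseteq \SL(2,\Z)$. Then $G$ contains parabolics $\left[\begin{smallmatrix} 1 & N \\ 0 & 1 \end{smallmatrix}\right]$ and $\left[\begin{smallmatrix} 1 & 0 \\ N & 1 \end{smallmatrix}\right]$ for some positive integer $N$, fixing the horizontal and vertical directions. By the standard theory of parabolic elements in Veech groups, each is realized by an affine multitwist, so $(X,\omega)$ decomposes into horizontal cylinders and, transversally, into vertical cylinders. Matching the single derivative of the horizontal multitwist against the twist in each cylinder forces the inverse moduli $w_i/h_i$ of the horizontal cylinders (circumference over height) to equal $N/m_i$ for integers $m_i$, hence to be rational; the same holds for the vertical cylinders. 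The heart of the argument is then to upgrade ``two transverse cylinder decompositions with rational moduli'' to ``all periods lie in one lattice.'' Here I would use that each horizontal saddle connection has length a sum of the $w_i$ and each vertical saddle connection a sum of the $h_i$, with the symmetric statement for the vertical decomposition; feeding the rationality of the moduli into these incidence relations produces a system of $\Q$-linear equations forcing all the $w_i$ to be rational multiples of a single real number $p$ and all the $h_i$ rational multiples of a single $q$. Consequently every relative period of $\omega$ lies in $\Lambda = \Z\,p \oplus i\,\Z\,q$, and the developing map of $\omega$ descends to a branched covering $f:X \to \C/\Lambda$ with $f^\ast(dz)=\omega$, which is the desired torus cover.

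I expect the crux to be precisely this last passage in the hard direction: extracting a single global lattice from two \emph{independent} transverse decompositions requires carefully tracking how the horizontal and vertical combinatorics constrain one another, and in particular ruling out that the two directions carry incommensurable length scales. By contrast, the inclusion $\SL(X,\omega) \subseteq \SL(2,\Z)$ and the finiteness of the tiling orbit in the easy direction are routine once the square-tiling normalization is in place.
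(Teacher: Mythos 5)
First, a framing remark: the paper does not prove this theorem at all; it is quoted as background and attributed to Gutkin and Judge \cite{GJ00}. So your proposal can only be assessed on its own terms, and on those terms the direction (1) $\Rightarrow$ (2) has a genuine gap. The paper's definition of a covering allows $f:X \to \C/\Z^2$ to be branched over \emph{arbitrary} points of the torus, and if some branch point is not a lattice point, pulling back the unit-square tiling does not exhibit $(X,\omega)$ as tiled by squares: the preimage of a square with a branch point in its interior is a branched cover of that square, not a union of squares. Consequently your orbit--stabilizer argument on the finite set of $N$-square tilings never gets started. This is not a removable technicality, because your argument for this direction never uses the lattice-property hypothesis, while the implication is simply false without it: the double cover of $\C/\Z^2$ branched over $0$ and over $(\alpha,0)$ with $\alpha$ irrational covers a torus in the paper's sense, yet its Veech group is contained (up to sign) in the group of upper-triangular unipotent integer matrices, so it is not a lattice, let alone arithmetic. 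The repair keeps your first claim --- the derivative of an affine automorphism preserves the absolute period lattice $\Lambda$, a finite-index sublattice of $\Z^2$, so $\SL(X,\omega)$ lies in the stabilizer of $\Lambda$, a conjugate of $\SL(2,\Z)$ --- and then invokes the hypothesis: $\SL(X,\omega)$ has finite covolume and sits inside a lattice, hence has finite index in it, hence is arithmetic.

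Your direction (2) $\Rightarrow$ (1) follows the right route; it is essentially Veech's argument from \cite[\S 9]{V}, which the paper itself invokes elsewhere (theorem \ref{thm:graphs}). Two points need attention. First, one of your incidence relations is misstated: a horizontal saddle connection has length equal to a sum of widths of the \emph{vertical} cylinders it meets (with multiplicity), not a sum of the horizontal circumferences $w_i$; the usable relations are that each horizontal circumference is an integer combination of vertical widths and each vertical circumference is an integer combination of horizontal heights. Substituting the rational moduli into these turns the circumference vector into a positive eigenvector of a nonnegative \emph{rational} matrix, irreducible because $X$ is connected, and Perron--Frobenius forces that eigenvector onto a rational line --- that, and not merely ``a system of $\Q$-linear equations,'' is where the single length scale comes from. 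Second, the assertion that every relative and absolute period then lies in $\Lambda$ needs an argument: homotope any path between singularities into the $1$-skeleton of the decomposition of $X$ into the rectangles cut out by the two cylinder families, so that every period is an integer combination of the vectors given by rectangle sides. With those repairs the second direction is sound.
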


\begin{theorem}[Schmith\"usen {\cite{Schm04}}]
\label{thm:schmithusen}
If $\SL(X,\omega)$ is arithmetic, then $\SL(X, \omega)$ is conjugate to a subgroup of $\SL(2, \Z)$.
\end{theorem}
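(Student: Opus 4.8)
The plan is to use the Gutkin--Judge theorem to convert the arithmetic hypothesis into the geometric statement that $(X,\omega)$ is a branched torus cover, and then to produce from that cover a single full-rank lattice in $\R^2$ that is preserved by \emph{every} affine automorphism; conjugating this lattice to $\Z^2$ forces all derivatives into $\SL(2,\Z)$ simultaneously. First, note that an arithmetic group is a lattice (it is commensurable to the lattice $\SL(2,\Z)$), so $(X,\omega)$ has the lattice property and theorem~\ref{thm:gutkin-judge} applies: $(X,\omega)$ covers a torus $(X_0,\omega_0)$, branched over a single point. Write $(X_0,\omega_0)=\C/\Lambda_0$ with $\omega_0=dz$, and let $f\colon X\to X_0$ be the covering, so that $\omega=f^\ast(dz)$. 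Conjugating $\SL(X,\omega)$ by the element of $\GL(2,\R)$ carrying $\Lambda_0$ to $\Z^2$ replaces $(X,\omega)$ by an affinely equivalent surface covering the standard torus $\C/\Z^2$; since conjugation does not affect the statement to be proved, I would assume from now on that $(X_0,\omega_0)=\C/\Z^2$.

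Next I would study the absolute period lattice $\Lambda=\{\int_\gamma\omega : [\gamma]\in H_1(X;\Z)\}$. For any closed loop $\gamma$ in $X$, the image $f\circ\gamma$ is a closed loop in $\C/\Z^2$, whence
\[
\int_\gamma \omega \;=\; \int_{f\circ\gamma} dz \;\in\; \Z^2 .
\]
Thus $\Lambda\subseteq\Z^2$. Moreover $\Lambda$ has rank $2$, since the periods of any nonzero holomorphic $1$-form on a surface of positive genus span $\R^2$ over $\R$, and $X$ has positive genus because it covers a torus. The essential point is that $\Lambda$ is preserved by the whole affine automorphism group: an affine automorphism $\phi$ with derivative $A=D\phi$ acts on homology by $\phi_\ast$ and transforms periods by its derivative, so $\int_{\phi_\ast\gamma}\omega = A\int_\gamma\omega$; applying this to $\phi$ and to $\phi^{-1}$ gives $A\Lambda=\Lambda$ for every $A\in\SL(X,\omega)$.

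Finally, I would conjugate once more by a fixed $B\in\GL(2,\R)$ with $B\Lambda=\Z^2$. Every derivative then preserves $\Z^2$, so $B\,\SL(X,\omega)\,B^{-1}\subseteq\SL(2,\R)\cap\GL(2,\Z)=\SL(2,\Z)$; composing the two conjugations yields the theorem. The crux of the argument, and the step I expect to require the most care, is establishing that one fixed full-rank sublattice of $\Z^2$ is invariant under all of $\SL(X,\omega)$ at once: this is exactly what upgrades the \emph{commensurability} built into the definition of ``arithmetic'' to an honest conjugation into $\SL(2,\Z)$. The supporting facts---that affine automorphisms act on $H_1(X;\Z)$ compatibly with the derivative action on periods, that the period lattice is full rank, and that the single matrix $B$ serves the entire group---are routine once the branched-cover picture from Gutkin--Judge is in hand.
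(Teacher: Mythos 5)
Your proposal is correct, but there is nothing in the paper to compare it against: theorem \ref{thm:schmithusen} is quoted from Schmith\"usen \cite{Schm04} without proof, so your argument must be judged against the literature rather than the text. Judged that way, it holds up. The reduction via theorem \ref{thm:gutkin-judge} is legitimate: an arithmetic group (in the sense of \S \ref{sect:background}, commensurable to $\SL(2,\Z)$) contains a finite-index subgroup conjugate to a finite-index subgroup of $\SL(2,\Z)$, hence has finite covolume, so the lattice property holds and Gutkin--Judge yields the torus cover. The paper's definition of covering gives exactly $\omega=f^\ast(\omega_0)$, so absolute periods of $\omega$ do land in the period lattice of the torus; your harmonic-function argument for full rank is valid (alternatively, $f_\ast$ is surjective on rational homology for any nonzero-degree map of closed surfaces, so the period lattice has finite index in $\Z^2$); the equivariance $\int_{\phi_\ast\gamma}\omega=D(\phi)\int_\gamma\omega$ is correct because $\phi^\ast\omega=D(\phi)\,\omega$ in translation charts; and conjugating the invariant full-rank lattice onto $\Z^2$ places the entire group inside $\SL(2,\R)\cap\GL(2,\Z)=\SL(2,\Z)$. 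For what it is worth, this is a genuinely different and more elementary route than Schmith\"usen's own: in \cite{Schm04} the result emerges from the theory of origamis, where the Veech group of a one-point branched cover of the standard torus is identified with the image, under the natural map $\mathrm{Aut}^{+}(F_2)\to\SL(2,\Z)$ with $F_2$ the fundamental group of the punctured torus, of the stabilizer of the subgroup corresponding to the cover; containment in $\SL(2,\Z)$ is thus built into that machinery. Your period-lattice argument instead isolates exactly the point where commensurability gets upgraded to honest conjugation, at the cost of invoking Gutkin--Judge as a black box.
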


Theorem \ref{thm:non_veech_triangle_groups} can be viewed a generalization of the following observation.

\begin{corollary}[Some non-Veech triangle groups]
The orientation preserving parts of the triangle groups $\Delta(2,4, \infty)$, $\Delta(2, 6, \infty)$, $\Delta(4,4, \infty)$ and $\Delta(6,6, \infty)$ can
not be subgroups of $\PSL(X, \omega)$ for any translation surface $(X, \omega)$.
\end{corollary}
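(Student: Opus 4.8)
The plan is to deduce this corollary directly from Theorem \ref{thm:non_veech_triangle_groups}, which is the general statement already available to us. The corollary asserts that $\Delta^+(m,n,\infty)$ cannot be contained in $\PSL(X,\omega)$ for the four specific pairs $(m,n) \in \{(2,4),(2,6),(4,4),(6,6)\}$, so the only work is to verify that each of these pairs satisfies the hypotheses of Theorem \ref{thm:non_veech_triangle_groups}. First I would observe that in all four cases both $m$ and $n$ are even, so the blanket hypothesis of the theorem is met, and I now need to check that each pair satisfies one of the two alternative conditions, namely that $m/\gamma$ and $n/\gamma$ are both odd or that $\gamma = 2$, where $\gamma = \gcd(m,n)$.

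Next I would run through the four cases and compute $\gamma$ in each. For $(2,4)$ we have $\gamma = \gcd(2,4) = 2$, so the condition $\gamma = 2$ holds. For $(2,6)$ we have $\gamma = \gcd(2,6) = 2$, so again $\gamma = 2$ holds. For $(4,4)$ we have $\gamma = \gcd(4,4) = 4$, and then $m/\gamma = n/\gamma = 1$, which is odd, so the both-odd condition holds. For $(6,6)$ we have $\gamma = \gcd(6,6) = 6$, and then $m/\gamma = n/\gamma = 1$, again odd, so the both-odd condition holds. Thus every one of the four pairs falls under at least one of the two disjuncts in the hypothesis of Theorem \ref{thm:non_veech_triangle_groups}.

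Having verified the hypotheses, I would then apply Theorem \ref{thm:non_veech_triangle_groups} to each pair to conclude that there is no translation surface $(X,\omega)$ with $\Delta^+(m,n,\infty) \subset \PSL(X,\omega)$, which is exactly the assertion of the corollary. I do not expect any genuine obstacle here: this is purely a matter of specializing a general theorem to four numerical instances, so the entire proof reduces to the elementary gcd computations above. The only thing to be careful about is ensuring the pairs are correctly ordered (each satisfies $2 \leq m \leq n$, consistent with the conventions in play) and that the diagonal cases $(4,4)$ and $(6,6)$ are handled by the ``both odd'' branch via the quotient being $1$ rather than being overlooked because $m = n$.
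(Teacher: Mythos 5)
Your deduction is logically sound: all four pairs are even, your gcd computations are correct, each pair satisfies one of the two hypotheses of Theorem \ref{thm:non_veech_triangle_groups}, and there is no circularity since the proof of that theorem in \S\ref{sect:not_veech} nowhere uses this corollary. However, this is not the paper's argument, and the difference matters for the paper's structure. The paper proves the corollary directly, in one line, from the two background theorems stated just above it: the groups $\Delta^+(2,4,\infty)$, $\Delta^+(2,6,\infty)$, $\Delta^+(4,4,\infty)$ and $\Delta^+(6,6,\infty)$ are \emph{arithmetic}, so if one of them were contained in $\PSL(X,\omega)$ then $\PSL(X,\omega)$ would be an arithmetic lattice, and Schmith\"usen's theorem (Theorem \ref{thm:schmithusen}) would force $\SL(X,\omega)$ to be conjugate into $\SL(2,\Z)$; but each of these triangle groups contains an elliptic element of trace $\pm\sqrt{2}$ or $\pm\sqrt{3}$, so it is not conjugate into $\SL(2,\Z)$, or even $\SL(2,\Q)$. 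The two routes buy different things. The paper's argument is essentially self-contained given the quoted background results, works precisely because these four groups happen to be arithmetic, and fits the corollary's expository role: it appears in the background section as the ``observation'' that Theorem \ref{thm:non_veech_triangle_groups} is later said to generalize, before any trace-field machinery is developed. Your argument instead imports the full strength of \S\ref{sect:not_veech} (Kenyon--Smillie, invariant trace fields, cyclotomic Galois theory). One caveat if you take your route: Theorem \ref{thm:non_veech_triangle_groups} as stated does cover $m=n$, but the paper proves it by concatenating Lemmas \ref{lem:trace_field} and \ref{lem:triangle_groups_fields}, and the latter is stated only for $2 \leq m < n$; so for the diagonal cases $(4,4)$ and $(6,6)$ you are implicitly relying on an extension of that lemma to $m=n$ (which does hold, but is not literally written in the paper), whereas the paper's arithmeticity proof covers those two cases with no extra work.
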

\begin{proof}
The arithmetic triangle groups are classified in \cite{Tak77}, and include the listed triangle groups. 
These groups are not conjugate into $\SL(2, \Z)$ (or even $\SL(2, \Q)$), because they contain elements with irrational trace.
\end{proof}
\section{Veech groups with non-commuting parabolics}
\label{sect:thurston_veech}
We will introduce {\em Thurston's construction}, a combinatorial construction which produces a translation surface $(X_0, \omega_0)$ 
with hyperbolic elements in $\SL(X,\omega)$. Thurston used this construction to generate pseudo-Anosov automorphisms of surfaces \cite[theorem 7]{T88}.
It follows from work of Veech that all translation surfaces with the lattice property arise from this construction \cite[\S 9]{V}.
McMullen realized that a concise way to describe the combinatorics of this construction is via a bipartite ribbon graph \cite[\S 4]{McM06}.

A {\em bipartite ribbon graph} is a finite connected graph $\G$ with vertex set $\V$ and edge set $\E$, equipped with two permutations $\n, \e:\E \to \E$, that satisfy the following conditions.
\begin{itemize}
\item The vertex set $\V$ is a disjoint union of two sets $\Alpha$ and $\Beta$.
\item There are functions $\alpha:\E \to \Alpha$ and $\beta:\E \to \Beta$ such that every edge $e \in \E$ joins vertex $\alpha(e) \in \Alpha$ to the vertex $\beta(e)\in \Beta$. 
\item For all $e \in \E$, the orbit ${\mathcal O}_\e(e)=\{\e^k(e)~:~k \in \N\}$ satisfies $\alpha({\mathcal O})=\alpha(e)$. Similarly, 
the orbit ${\mathcal O}_\n(e)=\{\n^k(e)~:~k \in \N\}$ satisfies $\beta({\mathcal O})=\beta(e)$.
\end{itemize}
The first two statements make $\G$ a bipartite graph. The third statement says that the cycles of the permutations $\e$ and $\n$ are the 
edges with a common vertex in $\Alpha$ or a common vertex in $\Beta$, respectively. These cycles make $\G$ an oriented ribbon graph. 

Given the data above plus a function $w:\V \to \R_{>0}$, we can construct a translation surface $(X_{\G,w}, \omega_{\G,w})$. This surface is a union of the rectangles
$R_e$ for $e \in \E$ with 
$$R_e=[0, w \circ \beta(e)] \times [0, w \circ \alpha(e)].$$
To build $(X_{\G,w}, \omega_{\G,w})$ isometrically identify the right side of each rectangle $R_e$ to the left side of $R_{\e(e)}$ and the top side of $R_e$ to the bottom side of $R_{\n(e)}$.
(This justifies the choice of symbols for the permutations; $\e$ is for east and $\n$ is for north.)

We now review some standard definitions.
A {\em saddle connection} in a translation surface $(X, \omega)$ is a geodesic segment $\sigma$ 
which intersects the zeros of $\omega$ precisely at its endpoints. A {\em cylinder} in $(X, \omega)$ is
a closed subset isometric to a Euclidean cylinder of the form $[0,a] \times \R/k\Z$.
The positive real constants $a$ and $k$ are the {\em width} and {\em circumference} of the cylinder, respectively.
The interior of a cylinder in $(X, \omega)$ is foliated by periodic trajectories of the geodesic flow. 
The direction of each trajectory viewed as an element of $\R\P^1$ is the same, and we call this the {\em direction} of the cylinder.
Each boundary component of a cylinder is either a finite union of saddle connections or a periodic trajectory.
A {\em cylinder decomposition} is finite collection of cylinders with disjoint interiors that cover the translation surface.
Each cylinder in a decomposition has the same direction, so we call this the {\em direction of the cylinder decomposition}.

The surface $(X_{\G,w}, \omega_{\G,w})$ comes equipped with both a horizontal and a vertical cylinder decomposition. These horizontal and vertical cylinders are in bijective correspondence
with the sets $\Alpha$ and $\Beta$ respectively. Given $a \in \Alpha$ and $b \in \Beta$, the respective cylinders are
$$\bigcup_{e \in \alpha^{-1}(a)} R_e 
\quad \textrm{and} \quad
\bigcup_{e \in \beta^{-1}(b)} R_e.$$

We say that $w$ is an {\em eigenfunction} of $\G$ corresponding to the {\em eigenvalue} $\lambda \in \R$ if for all $x \in \V$
$$\sum_{\overline{xy} \in \E} w(y)=\lambda w(x).$$
If $w$ is a positive eigenfunction with eigenvalue $\lambda$, then the Veech group of $(X_{\G,w}, \omega_{\G,w})$ contains the elements
\begin{equation}
\label{eq:standard_matrices}
P_0=\left[\begin{array}{rr} 1 & \lambda \\ 0 & 1 \end{array}\right] 
\quad
\textrm{and}
\quad
Q_0=\left[\begin{array}{rr} 1 & 0 \\ -\lambda & 1 \end{array}\right].
\end{equation}
(See \cite[\S 9]{V} and \cite[\S 4]{McM06}, for instance.)
Note, by the Perron-Frobeninus theorem, there is a unique positive eigenfunction up to scalar multiplication. 
For most applications, the choice of this eigenfunction is irrelevant. 
So, we will use $(X_\G,\omega_\G)$ to denote $(X_{\G,w}, \omega_{\G,w})$ where $w$ is a positive eigenfunction of the adjacency matrix of $\G$.

In \S \ref{sect:not_veech}, we will use the following consequence of comments in \cite[\S 9]{V}.

\begin{theorem}[Veech]
\label{thm:graphs}
Given any $(Y, \eta)$ such that $\SL(Y, \eta)$ contains two non-commuting parabolics $P$ and $Q$, there is
a bipartite ribbon graph $\G$ and an affine homeomorphism $\phi:(Y, \eta) \to (X_\G, \omega_\G)$. Moreover, we can assume that
$$
D(\phi) \circ P\circ D(\phi)^{-1} =\left[\begin{array}{rr} \pm 1 & r \lambda \\ 0 & \pm 1 \end{array}\right] 
\quad
\textrm{and}
\quad
D(\phi) \circ Q\circ D(\phi)^{-1}=\left[\begin{array}{rr} \pm 1 & 0 \\ s \lambda & \pm 1 \end{array}\right],
$$
where $\lambda$ is the Perron-Frobenius eigenvector of $\G$, $r,s \in \Q$ are non-zero, and the choice of signs depends
on the sign of the eigenvalues of $P$ and $Q$.
\end{theorem}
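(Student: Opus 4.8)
The plan is to read the two cylinder decompositions directly off of $(Y,\eta)$, to subdivide their cylinders until all moduli agree, and then to recognize the result as $(X_\G,\omega_\G)$ for the incidence graph of the resulting rectangles. First I would note that two parabolics in $\SL(2,\R)$ commute precisely when they fix the same point of $\RP^1$, so the non-commuting hypothesis forces $P$ and $Q$ to fix distinct directions. After conjugating by a suitable $A_0\in\GL(2,\R)$ (which I absorb into $D(\phi)$) I may assume $P$ fixes the horizontal direction and $Q$ the vertical one, so that $D(A_0)\circ P\circ D(A_0)^{-1}=\left(\begin{smallmatrix}\pm 1 & p\\ 0 & \pm 1\end{smallmatrix}\right)$ and $D(A_0)\circ Q\circ D(A_0)^{-1}=\left(\begin{smallmatrix}\pm 1 & 0\\ q & \pm 1\end{smallmatrix}\right)$ with $p,q\neq 0$, the diagonal signs being dictated by the signs of the eigenvalues of $P$ and $Q$. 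By the analysis in \cite[\S 9]{V}, the presence of the parabolic $P$ makes the horizontal direction completely periodic: $(Y,\eta)$ is a union of horizontal cylinders whose moduli are rational multiples of one another, and likewise $Q$ exhibits $(Y,\eta)$ as a union of vertical cylinders with commensurable moduli.

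Next I would equalize the moduli by subdivision. Since the horizontal cylinders have commensurable moduli, each can be cut into finitely many equal-height sub-strips so that every horizontal sub-cylinder has the same ratio (circumference)$/$(height), say $\lambda_H$; cutting the vertical cylinders into equal-width strips likewise makes every vertical sub-cylinder have ratio (circumference)$/$(width) equal to a single constant $\lambda_V$. These cuts introduce only marked regular points, so they change neither the translation surface nor the affine automorphisms $P$ and $Q$. Overlaying the two families of sub-cylinders partitions $(Y,\eta)$ into rectangles, and I take $\G$ to be their incidence graph: $\Alpha$ is the set of horizontal sub-cylinders, $\Beta$ the set of vertical sub-cylinders, $\E$ the set of rectangles, $\alpha$ and $\beta$ the maps sending a rectangle to the sub-cylinders containing it, and $\e$ and $\n$ the permutations recording the eastward and northward neighbor of a rectangle inside its horizontal and vertical sub-cylinder, respectively. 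Connectedness of $(Y,\eta)$ makes $\G$ connected, the two families give the bipartition, and the cyclic orders within sub-cylinders make $\G$ a bipartite ribbon graph.

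I would then identify the subdivided surface with $(X_\G,\omega_\G)$. Writing $\tilde h_a$ for the height of a horizontal sub-cylinder $a$ and $\tilde w_b$ for the width of a vertical sub-cylinder $b$, the equal-modulus relations read $\sum_{\alpha(e)=a}\tilde w_{\beta(e)}=\lambda_H\,\tilde h_a$ and $\sum_{\beta(e)=b}\tilde h_{\alpha(e)}=\lambda_V\,\tilde w_b$. Rescaling widths by $\kappa=\sqrt{\lambda_V/\lambda_H}$ through the diagonal map $g=\left(\begin{smallmatrix}\kappa & 0\\ 0 & 1\end{smallmatrix}\right)$, I find that the positive function $w$ given by $w(a)=\tilde h_a$ on $\Alpha$ and $w(b)=\kappa\,\tilde w_b$ on $\Beta$ satisfies $\sum_{\overline{xy}\in\E}w(y)=\lambda\,w(x)$ with $\lambda=\sqrt{\lambda_H\lambda_V}$. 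Thus $w$ is a positive eigenfunction, so by the Perron--Frobenius theorem $\lambda$ is the Perron--Frobenius eigenvalue of $\G$, and $g$ carries the subdivided surface onto the rectangle surface $(X_{\G,w},\omega_{\G,w})=(X_\G,\omega_\G)$ described above, whose Veech group contains the matrices of \eqref{eq:standard_matrices}. Setting $\phi$ equal to the composite of $A_0$, the subdivision, and $g$, I obtain the required affine homeomorphism with $D(\phi)=g\,D(A_0)$.

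Finally I would compute the conjugated derivatives. Because every horizontal sub-cylinder has full twist $\left(\begin{smallmatrix}1 & \lambda_H\\ 0 & 1\end{smallmatrix}\right)$, the shear $p$ of $P$ must be an integer---hence rational---multiple of $\lambda_H$, say $p=r\lambda_H$ with $r\in\Q\smallsetminus\{0\}$; symmetrically $q=s\lambda_V$ with $s\in\Q\smallsetminus\{0\}$. Conjugating by $g$ multiplies the upper-right entry by $\kappa$ and the lower-left entry by $\kappa^{-1}$, and since $\kappa\lambda_H=\lambda=\kappa^{-1}\lambda_V$ this yields $D(\phi)\circ P\circ D(\phi)^{-1}=\left(\begin{smallmatrix}\pm 1 & r\lambda\\ 0 & \pm 1\end{smallmatrix}\right)$ and $D(\phi)\circ Q\circ D(\phi)^{-1}=\left(\begin{smallmatrix}\pm 1 & 0\\ s\lambda & \pm 1\end{smallmatrix}\right)$, as desired. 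I expect the main obstacle to be the bookkeeping around the permutation action of $P$ and $Q$ on the cylinders---one must check that a parabolic which permutes cylinders of equal modulus still shears by a rational multiple of the common full twist---together with verifying that the introduced marked points do not interfere with realizing $\phi$ as a genuine affine homeomorphism onto $(X_\G,\omega_\G)$.
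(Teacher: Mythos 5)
Your proposal is correct, and it fleshes out exactly the argument the paper relies on: the paper gives no proof of this theorem, stating it as a consequence of the comments in \cite[\S 9]{V} (see also \cite[\S 4]{McM06}), and those comments are precisely the cylinder-decomposition, modulus-equalizing subdivision, and incidence-graph construction you carry out, together with the Perron--Frobenius rescaling. The only nitpick is your phrase ``integer---hence rational---multiple of $\lambda_H$'': after subdividing, and when $P$ permutes the maximal cylinders or has eigenvalue $-1$, the shear is only a rational multiple of $\lambda_H$, but since you flag this and use only rationality, the argument stands.
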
 
\section{Veech triangle groups}
\label{sect:results}

In this section, we describe our construction of translation surfaces $(X_{m,n}, \omega_{m,n})$ 
for which $\PGL(X_{m,n}, \omega_{m,n})$ is commensurable to $\Delta(m,n,\infty)$. 
Section \ref{sect:outline} reveals where we prove these results.

\subsection{Grid graphs}
\label{sect:bm}

For integers $m$ and $n$ with $m \geq 2$ and $n \geq 2$, we define the
{\em $(m,n)$ grid graph} to be the graph $\G_{m,n}$ whose vertices are $v_{i,j}$ for integers $i$ and $j$ satisfying
$1 \leq i < m$ and $1 \leq j <n$. We define
$$\E=\{\overline{v_{i,j} v_{k,l}} ~:~ (i-k)^2+(j-l)^2=1\}.$$
(Equivalently, embed the vertices in $\R^2$ in the natural way as in figure \ref{fig:graph_example2} and join edges between vertices of distance $1$.)
We make this graph bipartite by defining the disjoint subsets $\Alpha, \Beta$ by
\begin{equation}
\label{eq:node_labels}
\Alpha=\{v_{i,j} \in \V~:~\textrm{$i+j$ is even}\} 
\quad
\textrm{and}
\quad
\Beta=\{v_{i,j} \in \V~:~\textrm{$i+j$ is odd}\} 
\end{equation}
We will use the notation $\alpha_{i,j}=v_{i,j}$ provided $v_{i,j} \in \Alpha$ and 
$\beta_{i,j}=v_{i,j}$ provided $v_{i,j} \in \Beta$.

\begin{figure}
\begin{center}
\includegraphics[height=1.9in]{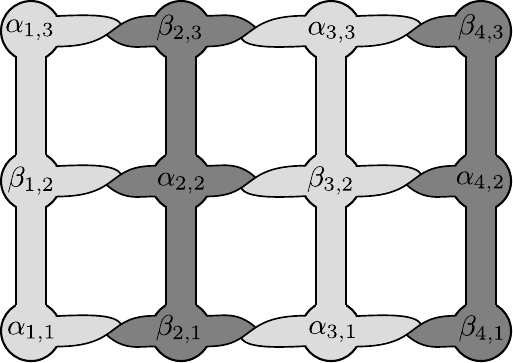}
\quad
\includegraphics[height=1.9in]{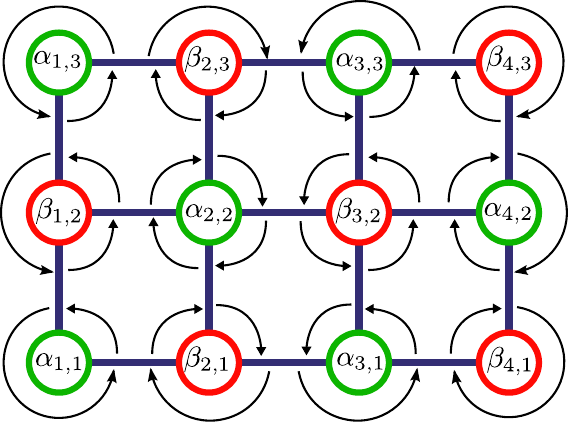}
\caption{The bipartite ribbon graph $\G_{5,4}$ is shown on the left. On the right, we view this as a graph
equipped with two edge permutations. The edge permutation $\e$ is indicated by the arrows surrounding the $\alpha$
vertices, and $\n$ is indicated by the arrows surrounding the $\beta$ vertices.}
\label{fig:graph_example2}
\end{center}
\end{figure}

So that the edge set $\E$ is non-empty, we make the assumption that $mn \geq 6$. 
We make $\G_{m,n}$ a ribbon graph by defining the permutations $\e,\n:\E \to \E$ according to the following convention.

\begin{convention}[Permutation convention]
\label{permutation_convention}
The permutations $\e,\n:\E \to \E$ are determined from cyclic orderings for the edges around each vertex $v_{i,j}$.
Consider the usual embedding of $\G_{m,n}$ into $\R^2$ as in figure \ref{fig:graph_example2}. We choose the clockwise ordering around $v_{i,j}$ 
when $i$ is even and the counter-clockwise ordering when $i$ is odd.
\end{convention}

The positive eigenfunction is given by the equation 
\begin{equation}
\label{eq:eigenfunction}
w(v_{i,j})=\sin (\frac{i \pi}{m}) \sin (\frac{j \pi}{n}).
\end{equation}
For $m$ and $n$ as above, define the translation surface $(X_{m,n}, \omega_{m,n})=(X_{\G_{m,n},w}, \omega_{\G_{m,n},w})$. 

The following matrices are all $2 \times 2$ matrices of determinant $-1$ with eigenvalues $\pm 1$. 
\begin{equation}
\label{eq:matrices}
A=\left[\begin{array}{rr}
-1 & -2 \cos \frac{\pi}{m} \\
0 & 1
\end{array}\right]
\quad 
B=\left[\begin{array}{rr}
-1 & 2 \cos \frac{\pi}{n} \\
0 & 1
\end{array}\right]
\quad
C=\left[\begin{array}{rr} 
0 & -1 \\ 
-1 & 0 
\end{array}\right]
\quad 
E=\left[\begin{array}{rr}
-1 & 0 \\
0 & 1
\end{array}\right]
\end{equation}
The projection of the subgroup $\langle A,B,C \rangle \subset \GL(2, \R)$ to $\PGL(2, \R)$ is conjugate to the triangle group $\Delta(m,n,\infty)$ described
in the introduction. The subgroup $\langle A,B,C \rangle \subset \GL(2, \R)$ is characterized by this image and the statement that $-I \in \langle A, B, C\rangle$. 
These matrices have the relations
$$A^2=B^2=C^2=I, \quad (AC)^m=-I, \quad \text{and} \quad (BC)^n=\begin{cases}
-I & \text{if $n$ is even}\\
I & \text{if $n$ is odd,}
\end{cases} $$
while $AB$ is parabolic. When $n=m$, $E$ appears as an additional symmetry, satisfying the relations
$E^2=I$, $EAE=B$ and $(EC)^2=-I$. The subgroup $\langle A,C,E \rangle \subset \GL(2, \R)$ projects to a group conjugate to $\Delta(2,m, \infty)$ in
$\PGL(2, \R)$. 

\begin{theorem}[The Veech groups]
\label{thm:veech_groups}
Let $m \geq 2$ and $n \geq 2$ be integers with $mn \geq 6$. 
If $mn<10$, then $(X_{m,n}, \omega_{m,n})$ is a torus 
and so $\GL(X_{m,n}, \omega_{m,n})$ is conjugate to $GL(2,\Z)$. 
The following statements determine the Veech group $\GL(X_{m,n}, \omega_{m,n})$
when $mn \geq 10$
\begin{itemize}
\item If $m \neq n$, then
\begin{itemize}
\item When $m$ and $n$ are not both even, $\GL(X_{m,n}, \omega_{m,n})=\langle A,B,C \rangle$.
\item When $m$ and $n$ are both even, $\GL(X_{m,n}, \omega_{m,n})=\langle A,B,CAC,CBC\rangle$. (This is an index two subgroup of
$\langle A,B,C\rangle$.) 
\end{itemize}
\item If $m=n$, then
\begin{itemize}
\item When $m$ is odd, $\GL(X_{m,m}, \omega_{m,m})=\langle A,C,E \rangle$.
\item When $m$ is even, $\GL(X_{m,m}, \omega_{m,m})=\langle A,E,CAC\rangle$. (This is a reflection group in an $(\frac{m}{2},\infty, \infty)$ triangle.)
\end{itemize}
\end{itemize}
\end{theorem}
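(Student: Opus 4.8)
The plan is to establish the two inclusions separately: first that the listed matrices are derivatives of affine automorphisms of $(X_{m,n},\omega_{m,n})$, and then that $\GL(X_{m,n},\omega_{m,n})$ contains nothing more. I would begin the containment step by confirming that \eqref{eq:eigenfunction} really is the Perron--Frobenius eigenfunction and extracting its eigenvalue. Summing $w$ over the neighbors of $v_{i,j}$ and applying $\sin\frac{(i-1)\pi}{m}+\sin\frac{(i+1)\pi}{m}=2\cos\frac{\pi}{m}\sin\frac{i\pi}{m}$ together with the analogous identity in $j$, the neighbor sum collapses to $\lambda\, w(v_{i,j})$ with $\lambda=2(\cos\frac{\pi}{m}+\cos\frac{\pi}{n})$; the boundary is automatically handled because $\sin 0=\sin\pi=0$ kills the phantom terms $v_{0,j},v_{m,j},v_{i,0},v_{i,n}$. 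By \eqref{eq:standard_matrices} this puts $P_0$ and $Q_0$ in the Veech group, and a direct multiplication gives $AB=P_0^{-1}$, so the horizontal parabolic from \eqref{eq:matrices} is accounted for.

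For the remaining, orientation--reversing generators I would use the weight--preserving automorphisms of the ribbon graph $\G_{m,n}$: the two axis reflections $v_{i,j}\mapsto v_{m-i,j}$ and $v_{i,j}\mapsto v_{i,n-j}$, and, when $m=n$, the transposition $v_{i,j}\mapsto v_{j,i}$. Each preserves $w$ because $\sin\frac{(m-i)\pi}{m}=\sin\frac{i\pi}{m}$ (and similarly in $j$, and by symmetry of the two sine factors when $m=n$), and each is compatible with the ribbon structure of Convention~\ref{permutation_convention} up to orientation, so each induces an affine automorphism. Composing these, where needed, with Dehn twists in the horizontal and vertical cylinder decompositions, I would compute the derivative in each case and verify by matrix multiplication that suitable products realize $A$, $B$, $C$, and $E$, along with all the stated relations $A^2=B^2=C^2=I$, $(AC)^m=(BC)^n=-I$, $E^2=I$, and $EAE=B$. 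Note that $-I=(AC)^m$ then lies in the group automatically, pinning the image down at the $\GL(2,\R)$ level and not merely projectively.

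The parity of $m$ and $n$ enters exactly through the action of these reflections on the bipartite labeling \eqref{eq:node_labels}. Since $(m-i)+j$ and $i+j$ differ by $m-2i$, the reflection $v_{i,j}\mapsto v_{m-i,j}$ changes the parity of $i+j$, hence swaps $\Alpha\leftrightarrow\Beta$ and interchanges the horizontal and vertical cylinder decompositions, precisely when $m$ is odd; likewise $v_{i,j}\mapsto v_{i,n-j}$ swaps the classes exactly when $n$ is odd. Thus when $m$ and $n$ are not both even the surface carries a class--swapping symmetry realizing $C$, giving all of $\langle A,B,C\rangle$; when both are even no reflection swaps the classes, only the class--preserving conjugates $CAC$ and $CBC$ are realized, and one lands in the index--two subgroup. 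The diagonal case $m=n$ contributes the transposition, which preserves the classes and supplies $E$ with $EAE=B$, yielding $\langle A,C,E\rangle$ or $\langle A,E,CAC\rangle$ according to parity.

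For the upper bound I would note that the projectivization of $\langle A,B,C\rangle$ is the triangle group $\Delta(m,n,\infty)$, a lattice in $\PGL(2,\R)$; since $\GL(X_{m,n},\omega_{m,n})$ is discrete and now contains a finite--covolume subgroup, it is itself a lattice containing the relevant group with finite index. To exclude a strictly larger Veech group I would invoke the maximality of $(m,n,\infty)$ triangle groups among Fuchsian groups (Singerman's classification of inclusions), together with the cusp and cone--point data of the surface, to rule out the finitely many candidate overgroups. The genuinely delicate point, which I expect to be the main obstacle, is the even case: there I must show that the class--swapping element $C$ is \emph{not} the derivative of any affine automorphism, so that the group is exactly the index--two subgroup and not all of $\Delta(m,n,\infty)$. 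The natural route is to produce a combinatorial invariant separating the $\Alpha$--cylinders from the $\Beta$--cylinders --- for instance a ribbon--combinatorial marking recording the induced orientation data, which an abstract coincidence of the circumference multisets $\{\lambda w(a):a\in\Alpha\}$ and $\{\lambda w(b):b\in\Beta\}$ cannot undo --- and to show it is preserved by every affine automorphism yet would be exchanged by any map with derivative $C$. Making this non--existence argument airtight, and matching the resulting group with the $(m/2,n/2,\infty,\infty)$ reflection orbifold of Theorem~\ref{thm:veech_triangle_groups}, is where the real work lies.
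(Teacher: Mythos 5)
Your first paragraph is fine (the eigenvalue computation and $AB=P_0^{-1}$ are correct), but the core of your lower bound breaks down, and provably so. Every affine automorphism you propose to construct comes either from an automorphism of the ribbon graph $\G_{m,n}$ --- which necessarily preserves the pair of horizontal/vertical cylinder decompositions, hence has derivative in the dihedral group $D_8=\langle C,E\rangle$ (proposition \ref{prop:dihedral}) --- or from Dehn twists, with derivatives $P_0,Q_0$ of equation \eqref{eq:standard_matrices}. So everything you can certify lies in $G^*=\langle C,E,P_0,Q_0\rangle$. Since $C$ and $E$ conjugate $\{P_0^{\pm1},Q_0^{\pm1}\}$ into itself ($EP_0E=P_0^{-1}$, $CP_0C=Q_0^{-1}$, etc.), every element of $G^*$ factors as $u\,d$ with $u\in N=\langle P_0,Q_0\rangle$ and $d\in D_8$. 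Now project to $\PGL(2,\R)$ and set $x=\bar A\bar C$, $y=\bar C\bar B$, so $\Delta^+(m,n,\infty)=\langle x,y\rangle\cong\Z/m\ast\Z/n$, $\bar P_0=(xy)^{-1}$, $\bar Q_0=x^{-1}y^{-1}$. Under abelianization to $\Z/m\times\Z/n$ all four parabolics $\bar P_0^{\pm1},\bar Q_0^{\pm1}$ map to $\pm(1,1)$. If $\bar A\in \bar G^*$, then writing $\bar A=\bar u\bar d$ forces $\bar d\in\{\bar C,\bar E\}$ by orientation: if $\bar d=\bar C$ then $x=\bar A\bar C=\bar u\in\bar N$, impossible when $\gcd(m,n)>1$ because $x\mapsto(1,0)\notin\langle(1,1)\rangle$; if $\bar d=\bar E$ then $\bar u=\bar A\bar E$ is the translation $z\mapsto z-2\cos\frac{\pi}{m}$, impossible for $m\geq 3$ because the stabilizer of $\infty$ in $\bar N\subset\Delta^+(m,n,\infty)$ consists of translations by multiples of $\lambda$ and $0<2\cos\frac{\pi}{m}<\lambda$. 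So, e.g., for $(m,n)=(3,6)$ or $(4,6)$, no product of your generators equals $A$: the matrix identities you plan to ``verify by multiplication'' do not exist. This is exactly why the paper proves the semiregular decomposition theorem (theorem \ref{thm:semiregular}) before touching the Veech group: $A$ and $B$ are invisible in the rectangle picture and arise only as $A=D(\nu)^{-1}ED(\nu)$ and $B=D(\mu)^{-1}ED(\mu)$, pullbacks of isometric symmetries of the two polygon decompositions $(Y_{n,m},\eta_{n,m})$ and $(Y_{m,n},\eta_{m,n})$ under the affine equivalences $\nu,\mu$ (proposition \ref{prop:orthogonal}). Any correct proof needs some substitute for this hidden symmetry; your proposal has none.

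Two further points. First, the ``genuinely delicate point'' you flag in the both-even case has a one-line resolution, which is the paper's: when $m$ and $n$ are both even, $|\Alpha|+|\Beta|=(m-1)(n-1)$ is odd, so the numbers of horizontal and vertical cylinders differ, and consequently no affine automorphism can have derivative $C$; no marking invariant is needed (indeed your premise that the two circumference multisets coincide is false --- they have different cardinalities). Second, your upper bound via Singerman maximality is also not adequate as stated: for $m=n$ the group $\Delta(m,m,\infty)$ is \emph{not} maximal (it has index two in a $(2,m,\infty)$ group, which is precisely why $E$ enlarges the Veech group there), and in the both-even case the asserted answer is a quadrilateral reflection group, so maximality of triangle groups cannot pin it down --- ruling out the triangle-group overgroup is again the $C$-exclusion problem. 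The paper handles all cases uniformly by an orbifold Euler characteristic comparison between $\H^2/\Gamma_{m,n}$ and $\H^2/\SL(X_{m,n},\omega_{m,n})$, with cusps counted via the cylinder disparity above and cone points via the orthogonal groups.
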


\begin{corollary}[Arithmeticity]
The surface $(X_{m,n}, \omega_{m,n})$ has an arithmetic Veech group if and only if $(m,n)$ or $(n,m)$ is in the set
$\{(2,3), (2,4), (2, 6), (3,3), (4,4), (6,6)\}.$
\end{corollary}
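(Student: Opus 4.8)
The plan is to reduce the arithmeticity of $\SL(X_{m,n},\omega_{m,n})$ to the arithmeticity of an explicitly identifiable triangle group, and then to read off the answer from the known classification of arithmetic triangle groups. The structural input is that arithmeticity is a commensurability invariant: by definition an arithmetic group is one commensurable to $\SL(2,\Z)$, and the commensurability relation used here (finite-index subgroups conjugate in $\SL(2,\R)$) is an equivalence relation, hence transitive; so a group commensurable to an arithmetic group is itself arithmetic, and conversely. Since $-I$ lies in each Veech group under discussion and in $\SL(2,\Z)$, and each such group is the full preimage of its projection, commensurability upstairs in $\SL(2,\R)$ is equivalent to commensurability downstairs in $\PSL(2,\R)$; I will therefore argue at the level of $\PSL(2,\R)$ and the triangle groups.

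First I would record, for each case of theorem \ref{thm:veech_groups}, the triangle group to which $\PSL(X_{m,n},\omega_{m,n})$ is commensurable. When $m \neq n$ the Veech group is the orientation-preserving part of $\langle A,B,C\rangle$ or of its index-two subgroup $\langle A,B,CAC,CBC\rangle$, so $\PSL(X_{m,n},\omega_{m,n})$ is commensurable to $\Delta^+(m,n,\infty)$. When $m=n$ is odd, $\langle A,C,E\rangle$ yields commensurability with $\Delta^+(2,m,\infty)$; when $m=n$ is even, $\langle A,E,CAC\rangle$ is a reflection group in an $(m/2,\infty,\infty)$ triangle, so $\PSL(X_{m,m},\omega_{m,m})$ is commensurable to $\Delta^+(m/2,\infty,\infty)$. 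Thus arithmeticity of the Veech group is equivalent, respectively, to arithmeticity of the $(m,n,\infty)$, the $(2,m,\infty)$, or the $(m/2,\infty,\infty)$ triangle group.

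Next I would invoke Takeuchi's classification of arithmetic triangle groups. Each group above is non-cocompact, because the entry $\infty$ produces a cusp, and the non-cocompact arithmetic Fuchsian groups form a single commensurability class, that of $\PSL(2,\Z)$; among triangle groups this class consists exactly of $(2,3,\infty)$, $(2,4,\infty)$, $(2,6,\infty)$, $(2,\infty,\infty)$, $(3,3,\infty)$, $(3,\infty,\infty)$, $(4,4,\infty)$, $(6,6,\infty)$, and $(\infty,\infty,\infty)$. Intersecting this list with the groups produced in each case gives: for $m \neq n$, the arithmetic cases are exactly $\{m,n\}=\{2,3\},\{2,4\},\{2,6\}$; for $m=n$ odd, arithmeticity holds exactly when $(2,m,\infty)$ is on the list, i.e.\ $m=3$, giving $(3,3)$; for $m=n$ even, arithmeticity holds exactly when $m/2 \in \{2,3\}$, i.e.\ $m=4$ or $m=6$, giving $(4,4)$ and $(6,6)$. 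Assembling these yields precisely the stated set $\{(2,3),(2,4),(2,6),(3,3),(4,4),(6,6)\}$, up to interchanging the two entries.

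The main obstacle is not a hard computation but the bookkeeping in the $m=n$ cases, where the relevant triangle group is \emph{not} $(m,m,\infty)$ but the smaller group $(2,m,\infty)$ or $(m/2,\infty,\infty)$ forced by the extra symmetry $E$; one must use precisely the commensurability statements of theorem \ref{thm:veech_groups} rather than the naive guess. A minor point to verify is that the excluded degenerate cases ($mn<6$, in particular $m=n=2$) never arise, so that every $(m/2,\infty,\infty)$ triangle appearing is genuinely hyperbolic with $m/2 \geq 2$, and that the classification is being applied only to honest hyperbolic triangle groups.
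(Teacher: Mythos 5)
Your proof is correct, but it follows a genuinely different route from the paper's. The paper argues with traces: for the six listed pairs it asserts (as a straightforward check) that the Veech group is conjugate into $\SL(2,\Z)$, and for every other pair it exhibits an explicit element whose trace obstructs arithmeticity --- an elliptic with irrational trace in most cases, and the element $CACB$ in the one stubborn case $(4,6)$, where the index-two group $\langle A,B,CAC,CBC\rangle$ only contains elliptics of rational trace and a non-elliptic witness is needed. That obstruction is valid because of theorem \ref{thm:schmithusen} (Schmith\"usen), already quoted in the background: an arithmetic Veech group is conjugate into $\SL(2,\Z)$ and hence has only integer traces. You instead identify the commensurability class of $\PSL(X_{m,n},\omega_{m,n})$ with an explicit triangle group via theorem \ref{thm:veech_groups} --- $\Delta^+(m,n,\infty)$ for $m\neq n$, $\Delta^+(2,m,\infty)$ for $m=n$ odd, $\Delta^+(m/2,\infty,\infty)$ for $m=n$ even --- and then quote Takeuchi's classification, in the form that the non-cocompact arithmetic Fuchsian groups constitute the single commensurability class of $\PSL(2,\Z)$, whose triangle groups are the nine you list. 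What your approach buys: no matrix computations, a uniform treatment of all cases including $(4,6)$, and an honest two-sided argument (the paper's ``straightforward to check'' direction still requires producing conjugations into $\SL(2,\Z)$ for six surfaces). What it costs: reliance on Takeuchi's classification, an external result the paper never cites, whereas the paper's argument is self-contained given theorem \ref{thm:schmithusen}. One small remark on what you call the main obstacle: your careful bookkeeping in the $m=n$ cases is correct but not actually forced, since $\Delta(m,m,\infty)$ and $\Delta(m/2,\infty,\infty)$ are each index two in $\Delta(2,m,\infty)$ (bisect the respective triangles), so all three groups are commensurable and the ``naive guess'' $(m,m,\infty)$ would have produced the same answer via Takeuchi's list.
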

\begin{proof}
It is straightforward to check that in all the listed cases the Veech group of $(X_{m,n}, \omega_{m,n})$ is conjugate to a subgroup of $\SL(2,\Z)$. 
We see that when $(m,n)$ or $(n,m)$ is $(4,6)$ then $CACB$ is in the Veech group, but its
trace is not an integer. In the remaining cases, the Veech group has elliptics whose trace is not rational.
\end{proof}

We will now consider primitivity. The graph $\G_{m,n}$ admits an automorphism $\iota$ defined by
\begin{equation}
\label{eq:iota}
\iota(v_{i,j})=v_{m-i,n-j}.
\end{equation}
In the special case that both $m$ and $n$ are even, this automorphism satisfies the following conditions.
\begin{itemize}
\item $\iota(\Alpha)=\Alpha$ and $\iota(\Beta)=\Beta$.
\item $\iota \circ \e=\e \circ \iota$ and $\iota \circ \n=\n \circ \iota$. 
\item $w \circ \iota=w$. 
\end{itemize}
These conditions imply that $\iota$ extends to an automorphism $\iota_\ast:(X_{m,n}, \omega_{m,n}) \to (X_{m,n}, \omega_{m,n})$, which simply permutes
the rectangles making up $(X_{m,n}, \omega_{m,n})$ according to $\iota$. In particular,
$(X_{m,n}^e, \omega_{m,n}^e)=(X_{m,n}, \omega_{m,n})/\iota_{\ast}$ is a translation surface covered by $(X_{m,n}, \omega_{m,n})$.

Note that $\SL(X^e_{m,n}, \omega^e_{m,n})$ is arithmetic if and only if $\SL(X_{m,n}, \omega_{m,n})$ is.

\begin{theorem}[Primitivity]
\label{thm:primitivity}
When $m$ and $n$ are not both even, $(X_{m,n}, \omega_{m,n})$ is primitive unless $\SL(X_{m,n}, \omega_{m,n})$
is arithmetic.
When both $m$ and $n$ are even, $(X_{m,n}^e, \omega_{m,n}^e)$ is primitive unless $\SL(X_{m,n}^e, \omega_{m,n}^e)$ is arithmetic.
\end{theorem}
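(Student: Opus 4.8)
The plan is to argue by contradiction, reducing non-primitivity to a statement about Veech groups via Möller's theorem (Theorem \ref{thm:primitive}) and then exploiting the explicit computation of Theorem \ref{thm:veech_groups}. Write $(Y,\eta)$ for $(X_{m,n},\omega_{m,n})$ when $m$ and $n$ are not both even, and for $(X^e_{m,n},\omega^e_{m,n})$ in the even-even case; passing to the quotient in the latter case is precisely what removes the automorphism $\iota_\ast$, which already exhibits $(X_{m,n},\omega_{m,n})$ itself as a nontrivial cover. Suppose $(Y,\eta)$ is not primitive but $\SL(Y,\eta)$ is non-arithmetic. By Theorem \ref{thm:primitive}, $(Y,\eta)$ covers a primitive surface $(X_0,\omega_0)$; if $\mathrm{genus}(X_0)=1$ then $(Y,\eta)$ covers a torus and Theorem \ref{thm:gutkin-judge} forces $\SL(Y,\eta)$ to be arithmetic, contrary to assumption. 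Hence $\mathrm{genus}(X_0)\geq 2$, and the second part of Theorem \ref{thm:primitive} gives that $\SL(Y,\eta)$ is a subgroup of $\SL(X_0,\omega_0)$; since both are lattices, the index is finite, and as arithmeticity is a commensurability invariant, $\SL(X_0,\omega_0)$ is itself a non-arithmetic lattice.

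The core of the argument is to show this situation cannot occur, namely that $\SL(Y,\eta)$ has no proper Veech overgroup and is covered only trivially. First I would rule out a proper overgroup. By the Margulis commensurability criterion, a non-arithmetic lattice $\Gamma$ in $\PSL(2,\R)$ has discrete commensurator $\mathrm{Comm}(\Gamma)$, which is the largest lattice in its commensurability class and contains every commensurable lattice. Applying this to $\Gamma=\PSL(Y,\eta)$ gives $\PSL(X_0,\omega_0)\subseteq\mathrm{Comm}(\Gamma)$, so it suffices to prove $\mathrm{Comm}(\Gamma)=\Gamma$. Here I would use Theorem \ref{thm:veech_groups}: $\PGL(Y,\eta)$ is the reflection group $\Delta(m,n,\infty)$ (respectively $\Delta(2,m,\infty)$ when $m=n$ is odd), while in the even cases it is the index-two subgroup uniformizing an $(m/2,n/2,\infty,\infty)$-orbifold. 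Maximality then follows from the classification of commensurabilities and inclusions among triangle groups (Takeuchi, Singerman): one checks that these particular $(m,n,\infty)$ groups are minimal in their commensurability class, so no strictly larger lattice contains them, and that the orientation-reversing symmetries responsible for any coincidences lie in $\PGL(2,\R)\setminus\PSL(2,\R)$. This yields $\PSL(X_0,\omega_0)=\Gamma$, hence $\SL(X_0,\omega_0)=\SL(Y,\eta)$.

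It remains to contradict the existence of a degree $d\geq 2$ cover $(Y,\eta)\to(X_0,\omega_0)$ with equal Veech groups. I would argue that equality of the full affine groups forces the covering to be compatible with the entire action, so that $(X_0,\omega_0)$ is recovered from $(Y,\eta)$ as a quotient by a group of automorphisms lying in the kernel of the derivative map $D:\Aff(Y,\eta)\to\GL(2,\R)$, that is, the translations together with any order-two automorphism of derivative $-I$. Equivalently, the identification $\SL(X_0,\omega_0)=\SL(Y,\eta)$ identifies the two Teichm\"uller curves, and the cover corresponds to a proper nonzero $\Gamma$-invariant subspace of the Hodge bundle containing $[\eta]$. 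Computing from the explicit eigenfunction \eqref{eq:eigenfunction} and the rectangle presentation that $[\eta]$ generates $H^{1,0}(Y)$ as a module over the trace field, one sees that no such proper subobject survives once $\iota_\ast$ has been factored out; this forces $\mathrm{genus}(X_0)=\mathrm{genus}(Y)$ and hence $d=1$, the desired contradiction. The main obstacle is the maximality step of the previous paragraph: correctly pinning down $\mathrm{Comm}(\Gamma)$ inside $\PSL(2,\R)$, especially in the even-even case where $\Gamma$ is a quadrilateral rather than a triangle group, and cleanly separating the genuine orientation-reversing symmetries from any that would enlarge the lattice.
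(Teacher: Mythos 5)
Your reduction at the start (handling the torus case via Theorem \ref{thm:gutkin-judge} and invoking Theorem \ref{thm:primitive} to get a unique primitive surface $(X_0,\omega_0)$ with $\SL(Y,\eta)\subseteq\SL(X_0,\omega_0)$) is fine and matches the setup one would expect. The gap is in the two steps that follow, and the first of them actually fails. You propose to show $\mathrm{Comm}(\Gamma)=\Gamma$ for $\Gamma=\PSL(Y,\eta)$ and deduce $\PSL(X_0,\omega_0)=\Gamma$. But in the even--even case, Theorems \ref{thm:veech_groups} and \ref{thm:even_veech_groups} say that $\PSL(X^e_{m,n},\omega^e_{m,n})$ is an index-two subgroup of $\Delta^+(m,n,\infty)$ (the quadrilateral group); any lattice containing $\Gamma$ with finite index lies in $\mathrm{Comm}(\Gamma)$, so here $\mathrm{Comm}(\Gamma)\supseteq\Delta^+(m,n,\infty)\supsetneq\Gamma$. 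No appeal to Takeuchi or Singerman can rescue this: the group is genuinely non-maximal in its commensurability class, which is exactly the case you flag as the ``main obstacle.'' (Non-maximality also occurs inside this family of triangle groups, e.g.\ $\Delta^+(m,m,\infty)\subset\Delta^+(2,m,\infty)$.) The second step has a different problem: even granting $\SL(X_0,\omega_0)=\SL(Y,\eta)$, your claim that a degree $d\geq 2$ translation covering with equal Veech groups is impossible is not proved; the assertion that $[\eta]$ generates $H^{1,0}(Y)$ over the trace field and that this ``forces $\mathrm{genus}(X_0)=\mathrm{genus}(Y)$'' is a sketch of a Hodge-theoretic idea with no argument behind it, and it is not a standard fact one can cite.

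The paper avoids both difficulties with an elementary geometric argument that you may want to compare. First, it shows $\Aff(Y,\eta)$ acts transitively on the zeros of $\eta$ (every vertex of the semiregular decomposition is a vertex of $P(0)$, and the rotation by $2\pi/n$ preserving $P(0)$ acts transitively on its vertices); by uniqueness of the covering in Theorem \ref{thm:primitive}, this forces the covering $f:Y\to X_0$ to be \emph{balanced}, i.e.\ to send zeros to zeros, hence saddle connections to saddle connections. Then, if $f$ is generically $k$-to-one and $P$ is a convex polygon bounded by saddle connections, $f^{-1}\circ f(P)$ consists of $k$ disjoint isometric polygons differing by translation. Applying this to $P(0)$, whose boundary saddle connections are the shortest ones on the surface, the only other polygon bounded by saddle connections of that length is $P(m-1)$, which is not a translate of $P(0)$ when $m$ and $n$ are not both even (and the quotient by $\iota_\ast$ handles the even--even case). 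Hence $k=1$ and the surface is primitive. This sidesteps any analysis of commensurators, overgroups, or covers with equal Veech groups entirely.
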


It remains to describe the Veech groups of $(X_{m,n}^e, \omega_{m,n}^e)$. 

\begin{theorem}
\label{thm:even_veech_groups}
Suppose that $m$ and $n$ are even and that $(X_{m,n}^e, \omega_{m,n}^e)$ is not a torus. Then, 
$\GL(X_{m,n}^e, \omega_{m,n}^e)=\GL(X_{m,n}, \omega_{m,n})$.
\end{theorem}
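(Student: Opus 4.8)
The plan is to show the two Veech groups coincide by establishing inclusions in both directions. The key structural fact is that $(X_{m,n}^e, \omega_{m,n}^e)$ is the quotient of $(X_{m,n}, \omega_{m,n})$ by the order-two translation automorphism $\iota_\ast$, so we have a degree-two covering $\pi:(X_{m,n}, \omega_{m,n}) \to (X_{m,n}^e, \omega_{m,n}^e)$. First I would observe that any affine automorphism must respect canonical structure associated to the covering. Since $\iota_\ast$ generates the deck group of $\pi$ and is a translation (its derivative is the identity), it lies in the center of $\Aff(X_{m,n}, \omega_{m,n})$ up to the normality forced by the covering being characteristic. Concretely, every $f \in \Aff(X_{m,n}, \omega_{m,n})$ conjugates $\iota_\ast$ to another deck transformation of $\pi$; since the only nontrivial deck transformation is $\iota_\ast$ itself, we get $f \circ \iota_\ast = \iota_\ast \circ f$, so $f$ descends to an affine automorphism $\bar f$ of the quotient with the same derivative. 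This gives the inclusion $\GL(X_{m,n}, \omega_{m,n}) \subseteq \GL(X_{m,n}^e, \omega_{m,n}^e)$.

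For the reverse inclusion, I would use theorem \ref{thm:primitive} (M\"oller's theorem). Under the hypothesis that $(X_{m,n}^e, \omega_{m,n}^e)$ is not a torus, I claim its genus is at least two, so the primitive surface it covers is well-defined and $\SL$-functorial. The cleanest route is: by theorem \ref{thm:primitivity}, $(X_{m,n}^e, \omega_{m,n}^e)$ is primitive unless its Veech group is arithmetic, and arithmeticity of $(X_{m,n}^e, \omega_{m,n}^e)$ is equivalent to that of $(X_{m,n}, \omega_{m,n})$ by the remark preceding this theorem. In the non-arithmetic case both surfaces are primitive of the same genus, and an affine automorphism of $(X_{m,n}^e, \omega_{m,n}^e)$ lifts through the characteristic covering $\pi$ to an affine automorphism of $(X_{m,n}, \omega_{m,n})$ with identical derivative, giving the containment $\GL(X_{m,n}^e, \omega_{m,n}^e) \subseteq \GL(X_{m,n}, \omega_{m,n})$.

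I expect the main obstacle to be the lifting argument in the reverse inclusion, specifically justifying that an arbitrary affine automorphism $g$ of the quotient lifts to the cover. The covering $\pi$ need not be a normal covering in the sense that $g$ preserves it a priori; what must be checked is that $g$ pulls back the covering $\pi$ to an isomorphic covering, so that $g$ lifts to $(X_{m,n}, \omega_{m,n})$. Here the functoriality in theorem \ref{thm:primitive} does the work: since the covering onto the common primitive surface is canonical, $g$ acts on the primitive quotient and the induced map on coverings identifies $\pi$ with itself, forcing a lift. Alternatively, one argues that $\iota_\ast$ is intrinsic, being the unique translation of order two whose quotient is $(X_{m,n}^e, \omega_{m,n}^e)$; any affine symmetry of the quotient must then commute with passage to this canonical double cover.

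A remaining technical point I would address is the exceptional arithmetic cases, where $(X_{m,n}^e, \omega_{m,n}^e)$ could fail to be primitive or could have larger genus considerations. Since the hypothesis explicitly excludes the torus case, and the finitely many arithmetic $(m,n)$ with $m,n$ both even are $(4,4)$ and $(6,6)$ from the arithmeticity corollary, I would handle these by direct verification that the derivative groups agree, using the explicit descriptions $\GL(X_{m,n}, \omega_{m,n}) = \langle A,E,CAC \rangle$ from theorem \ref{thm:veech_groups} and a direct computation on the rectangle decomposition. For all other even pairs the primitivity-plus-lifting argument applies uniformly, completing the proof that $\GL(X_{m,n}^e, \omega_{m,n}^e) = \GL(X_{m,n}, \omega_{m,n})$.
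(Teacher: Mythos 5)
The decisive gap is in your reverse inclusion $\GL(X_{m,n}^e, \omega_{m,n}^e) \subseteq \GL(X_{m,n}, \omega_{m,n})$. M\"oller's theorem \ref{thm:primitive} concerns the covering from a surface down to its primitive quotient: since $(X_{m,n},\omega_{m,n})$ covers the primitive surface $(X_{m,n}^e,\omega_{m,n}^e)$ (of genus at least two in the non-arithmetic, non-torus case), it yields $\SL(X_{m,n},\omega_{m,n}) \subseteq \SL(X_{m,n}^e,\omega_{m,n}^e)$ --- that is, the \emph{easy} inclusion --- and it says nothing about whether an affine automorphism $g$ of the base lifts to the double cover. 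Such a $g$ lifts if and only if it pulls the covering $\pi$ back to an isomorphic covering, i.e.\ $g$ preserves the conjugacy class of $\pi_\ast\pi_1$; calling $\pi$ ``characteristic'' is precisely the unproven point, and your fallback (uniqueness of the order-two translation whose quotient is $X_{m,n}^e$) concerns the deck group of $\pi$, not the action of $g$ on isomorphism classes of double covers. Nothing in your argument uses any property of this particular cover, and the implicit principle ``affine automorphisms of a primitive surface lift to its translation covers'' is false in general: otherwise every translation cover of a primitive surface would have the same Veech group as the base, which fails (covers branched over generic points have much smaller Veech groups). Your sentence ``both surfaces are primitive of the same genus'' is also false: $X_{m,n}$ double covers $X_{m,n}^e$, so it is not primitive and has strictly larger genus. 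The paper closes this direction by a completely different, quantitative argument: the orbifold $\H^2/\SL(X_{m,n}^e,\omega_{m,n}^e)$ must have at least two cusps, because the horizontal and vertical cylinder decompositions have different numbers of cylinders so no affine automorphism can exchange these directions, and it must have cone points of orders $m/2$ and $n/2$ (at least one of order $m/2$ when $m=n$) coming from the orthogonal groups of proposition \ref{prop:orthogonal2}. These features force $\chi\big(\H^2/\SL(X_{m,n}^e,\omega_{m,n}^e)\big) \leq \chi\big(\H^2/\SL(X_{m,n},\omega_{m,n})\big)$; combined with the inclusion $\SL(X_{m,n},\omega_{m,n}) \subseteq \SL(X_{m,n}^e,\omega_{m,n}^e)$, the induced covering of orbifolds must have degree one. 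Some upper bound of this kind (covolume, Euler characteristic, or a verified lifting criterion) is indispensable, and your proposal contains none.

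Your forward inclusion also has a gap, though a more repairable one. For $f \in \Aff(X_{m,n},\omega_{m,n})$, the conjugate $f \circ \iota_\ast \circ f^{-1}$ is an order-two affine automorphism with derivative $I$, i.e.\ a translation automorphism, but it is not a priori a deck transformation of $\pi$: conjugation by $f$ carries the deck group of $\pi$ to the deck group of the possibly different covering $\pi \circ f^{-1}$. To conclude $f \circ \iota_\ast \circ f^{-1} = \iota_\ast$ you need that $\iota_\ast$ is the \emph{unique} nontrivial translation automorphism of $(X_{m,n},\omega_{m,n})$ (the translation subgroup is normal in $\Aff(X_{m,n},\omega_{m,n})$, so this would close the argument); that is plausible --- translation automorphisms correspond to automorphisms of the grid graph commuting with $\e$ and $\n$ and preserving $w$ --- but you do not prove it. The paper avoids the issue entirely: it only needs the specific generators $A$, $B$, $CAC$, $CBC$ (respectively $A$, $E$, $CAC$ when $m=n$) of $\GL(X_{m,n},\omega_{m,n})$, known from theorem \ref{thm:veech_groups}, to lie in $\GL(X_{m,n}^e,\omega_{m,n}^e)$, and it obtains them by pulling back the dihedral symmetries of the semiregular polygon decomposition of $(Y^e_{m,n},\eta^e_{m,n})$ via corollary \ref{cor:semiregular2} and proposition \ref{prop:orthogonal2}, rather than by descending arbitrary automorphisms from the cover.
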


\begin{theorem}[The stratum of $(X_{m,n}, \omega_{m,n})$]
\label{thm:topological_type}
Let $m \geq 2$ and $n \geq 2$ and assume $mn \geq 6$. Set $\gamma=\gcd(m,n)$. Then $X_{m,n}$ is a surface of genus
$\frac{mn-m-n-\gamma}{2}+1$.
Provided $m>3$ or $n>3$,
the $1$-form $\omega_{m,n}$ has $\gamma$ zeros, each of order $\frac{mn-m-n}{\gamma}-1$. 
\end{theorem}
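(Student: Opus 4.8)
The plan is to compute the genus and the zero structure directly from the rectangle decomposition of $(X_{m,n}, \omega_{m,n})$, since the surface is explicitly a union of rectangles $R_e$ glued along edges. The cleanest route is to count the cells of the natural CW-structure coming from the grid graph and then apply the Euler characteristic formula $\chi = 2 - 2g$. First I would count the rectangles: the edge set $\E$ of $\G_{m,n}$ consists of pairs of adjacent grid vertices, so $|\E|$ is the number of horizontal plus vertical unit edges in an $(m-1)\times(n-1)$ grid, namely $(m-2)(n-1) + (m-1)(n-2)$ faces. Each rectangle contributes one $2$-cell, so $F = |\E|$. The edges of the CW-complex are the glued sides of the rectangles; since each rectangle has four sides and sides are identified in pairs, there are $2|\E|$ edges. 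The only genuinely delicate count is the number of vertices (cone points), since this requires understanding how the corners of the rectangles are identified, and this is exactly what controls both $g$ and the zero data.

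The key geometric fact I would exploit is the interplay of the two cylinder decompositions described in the Thurston's construction section: the horizontal cylinders correspond to $\Alpha$-vertices and the vertical cylinders to $\Beta$-vertices, with cylinder dimensions given by the eigenfunction $w(v_{i,j}) = \sin(i\pi/m)\sin(j\pi/n)$. To locate the zeros of $\omega_{m,n}$, I would track the equivalence classes of rectangle-corners and compute the cone angle at each class; a corner-class with cone angle $2\pi(k+1)$ is a zero of order $k$. The cone angles are governed by how many rectangle-corners meet at a point, which in turn is dictated by the combinatorics of $\e$ and $\n$ near the "boundary" and "interior" vertices of the grid. I expect the vertices on the outer frame of the grid (where $i \in \{1, m-1\}$ or $j \in \{1, n-1\}$) to behave differently from interior vertices, and the $\gcd$ condition $\gamma = \gcd(m,n)$ should emerge from counting how the corner-identifications at the four "ideal" directions of the grid close up into $\gamma$ separate orbits.

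Concretely, the plan is: (1) establish $F = |\E|$ and $E_{\mathrm{cw}} = 2|\E|$; (2) determine the vertex set $V_{\mathrm{cw}}$ of the CW-complex by analyzing corner-identifications, splitting into the regular corners (which are not singular) and the singular classes; (3) show that the singular points number exactly $\gamma$ and that, under the hypothesis $m > 3$ or $n > 3$, they all have equal cone angle, forcing each zero to have the same order; (4) assemble $\chi = V_{\mathrm{cw}} - E_{\mathrm{cw}} + F$ and solve for $g$. The order of each zero then follows either from the cone-angle computation at a singular class or, more slickly, from the Gauss--Bonnet / degree relation $\sum_P \mathrm{ord}_P(\omega) = 2g - 2$: with $\gamma$ zeros each of order $k$ we need $\gamma k = 2g-2$, and substituting the genus formula $g = \frac{mn-m-n-\gamma}{2}+1$ gives $2g - 2 = mn - m - n - \gamma$, hence $k = \frac{mn-m-n}{\gamma} - 1$, consistent with the claim. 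This makes step (3), pinning down that there are precisely $\gamma$ zeros and that they are all of equal order, the main obstacle, since it demands a careful orbit count of the corner-identifications rather than a bookkeeping of Euler-characteristic terms.

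The hypothesis $m>3$ or $n>3$ should be exactly what rules out the degenerate low cases where a would-be zero has order $0$ (i.e., is a regular point) or where the eigenfunction-weighted rectangles degenerate; I would check the excluded small cases $(m,n) \in \{(2,3),(3,2),(3,3)\}$ separately to confirm the order formula breaks down there, which both justifies the hypothesis and serves as a sanity check on the general count.
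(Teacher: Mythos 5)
Your overall framework is consistent, and it is in fact the route the paper says it took in earlier versions (the remark in \S\ref{sect:outline} notes that computing the topology through the grid-graph description is ``possible, though more cumbersome''), but as written there is a genuine gap: your step (3) \emph{is} the theorem, and you never actually perform it. Everything else in your plan is bookkeeping. Once the corner orbits are known, $F=|\E|=(m-2)(n-1)+(m-1)(n-2)$, $E_{\mathrm{cw}}=2|\E|$, the Euler characteristic, and the degree relation $\sum_P \mathrm{ord}_P(\omega)=2g-2$ finish things in a few lines. What you must supply, and do not, is a mechanism showing that the $4|\E|$ rectangle corners fall into exactly $(m-2)(n-2)+\gamma$ equivalence classes under the gluings induced by $\e$ and $\n$, of which $(m-2)(n-2)$ are regular ($4$ corners, angle $2\pi$) and $\gamma$ are cone points all of the same angle $2\pi(mn-m-n)/\gamma$. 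Your sentence about identifications ``at the four ideal directions of the grid closing up into $\gamma$ separate orbits'' is a guess, not an argument; nothing in the proposal explains where $\gcd(m,n)$ comes from, and that is the only nontrivial content of the statement. Note also that your ``slicker'' derivation of the zero order from $\gamma k=2g-2$ is circular in your own setup: it requires the genus (hence $V_{\mathrm{cw}}$, hence the full orbit count) \emph{and} the fact that all zeros have equal order, which is again exactly step (3). Your reading of the hypothesis $m>3$ or $n>3$ is correct but peripheral: the orbit structure is the same in the excluded cases; the hypothesis merely ensures the common cone angle exceeds $2\pi$, so the points are honest zeros.

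For comparison, the paper sidesteps the corner-orbit computation entirely by first proving the semiregular decomposition (theorem \ref{thm:semiregular}): $(X_{m,n},\omega_{m,n})$ is affinely equivalent to $(Y_{m,n},\eta_{m,n})$, a union of $m$ semiregular $2n$-gons $P(0),\ldots,P(m-1)$, and affine maps preserve genus and zero orders. In that model (proposition \ref{prop:sing1}) one rotates a tangent vector counterclockwise around a vertex class; each circuit through the chain of polygons accumulates angle $2\pi(mn-m-n)/n$, so the cone angle is $2x\pi(mn-m-n)/n$ where $x$ is the smallest positive integer making this a multiple of $2\pi$, namely $x=n/\gamma$. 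This is where the gcd enters --- as an elementary number-theoretic fact about when $x(mn-m-n)/n$ is an integer --- rather than as an orbit count. Dividing the total angle $2\pi(mn-m-n)$ by the common cone angle gives $\gamma$ singularities, and the Euler characteristic is then immediate from $m$ faces, $(m-1)n$ edges, and $\gamma$ vertices. If you want to keep your rectangle-level approach you must either carry out the corner-orbit count directly (tracking how $\e$, $\n$ and their inverses act on the four corners of each $R_e$, including the degenerate behavior along the boundary of the grid), or first establish something like lemma \ref{lem:mu} --- at which point you have reproduced the paper's proof.
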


\begin{theorem}[The stratum of $(X_{m,n}^e, \omega_{m,n}^e)$]
\label{thm:topological_type2}
If $m$ and $n$ are even integers satisfying $m \leq 4$ and $n \leq 4$, then $(X_{m,n}^e, \omega_{m,n}^e)$ is a torus. 
Now assume $m$ and $n$ are even and one is greater than four. Set $\gamma= \gcd(m,n)$. 
There are the following two cases.
\begin{enumerate}
\item If both $m/\gamma$ and $n/\gamma$ are odd, then $\mathit{genus}(X_{m,n}^e)=\frac{mn-m-n-2\gamma}{4}+1$, and $\omega^e_{m,n}$ has
$\gamma$ zeros each of order $\frac{mn-m-n}{2\gamma}-1$.
\item Otherwise, $\mathit{genus}(X_{m,n}^e)=\frac{mn-m-n-\gamma}{4}+1$, and the $1$-form $\omega^e_{m,n}$ has
$\gamma/2$ zeros each of order $\frac{mn-m-n}{\gamma}-1$.
\end{enumerate}
\end{theorem}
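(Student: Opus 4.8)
The plan is to analyze the deck involution $\iota_\ast$ geometrically and then read off the topology of the quotient from Riemann--Hurwitz, using the description of the zeros of $\omega_{m,n}$ from Theorem~\ref{thm:topological_type}. First I would pin down the nature of $\iota_\ast$. The listed properties $\iota \circ \e = \e \circ \iota$, $\iota \circ \n = \n \circ \iota$, $w \circ \iota = w$, together with $\iota(\Alpha)=\Alpha$ and $\iota(\Beta)=\Beta$, force $\iota_\ast$ to carry each rectangle $R_e$ to $R_{\iota(e)}$ by a \emph{translation} preserving the eastward and northward directions, so $D(\iota_\ast)=I$ and $\iota_\ast$ is a nontrivial translation automorphism of order two. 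A nontrivial translation automorphism is fixed-point free away from the zeros of $\omega_{m,n}$ (in a flat chart it is $z \mapsto z+c$ with $c \neq 0$), so its only possible fixed points are cone points. At a cone point of order $k$ (cone angle $2\pi(k+1)$), an orientation-preserving translation automorphism fixing it acts as a rotation by a multiple of $2\pi$, and an order-two such rotation exists precisely when $k$ is odd; in that case $\iota_\ast$ acts as the half turn, the quotient map is $2$-to-$1$ and branched there, and the cone angle halves, sending the order $k$ to $\tfrac{k-1}{2}$.

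The next step is an elementary parity computation that already settles case (2). Put $N=mn-m-n$, so by Theorem~\ref{thm:topological_type} the form $\omega_{m,n}$ has $\gamma$ zeros each of order $\tfrac{N}{\gamma}-1$. Since $m$ and $n$ are even, $\gamma$ is even; writing $m=\gamma m'$ and $n=\gamma n'$ with $\gcd(m',n')=1$ gives $\tfrac{N}{\gamma}=\gamma m'n'-m'-n'\equiv m'+n'\pmod 2$. As $m'$ and $n'$ cannot both be even, case (1) is exactly $m',n'$ both odd, where $\tfrac{N}{\gamma}$ is even and the zeros have \emph{odd} order, while case (2) is the mixed-parity situation, where $\tfrac{N}{\gamma}$ is odd and the zeros have \emph{even} order. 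In case (2) the previous paragraph shows $\iota_\ast$ cannot fix any cone point, hence acts freely on all of $X_{m,n}$, so the cover is unramified: the $\gamma$ zeros are permuted in $\gamma/2$ free pairs, giving $\gamma/2$ zeros of $\omega^e_{m,n}$ of order $\tfrac{N}{\gamma}-1$, and $\chi(X_{m,n})=2\chi(X^e_{m,n})$ yields $\mathit{genus}(X^e_{m,n})=\tfrac{N-\gamma}{4}+1$.

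Case (1) requires the real work. Here the odd order only makes fixing \emph{possible}, and the claim I must prove is that $\iota_\ast$ fixes \emph{all} $\gamma$ cone points (equivalently, that $\iota$ preserves each of the $\gamma$ combinatorial cycles describing the zeros in the proof of Theorem~\ref{thm:topological_type}). To do this I would take the explicit parametrization of those cone-point cycles in terms of $\e$ and $\n$, and, using that $\iota$ commutes with both permutations, compute its induced action on this $\gamma$-element set, expecting the verification to reduce to checking that each cycle is $\iota$-invariant precisely when $m'$ and $n'$ are odd (this is the one place where the parity of $m/\gamma$ and $n/\gamma$ must be used in an essential, non-parity-of-order way). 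Granting this, $\iota_\ast$ branches at all $\gamma$ cone points, producing $\gamma$ zeros of $\omega^e_{m,n}$ each of order $\tfrac{1}{2}\big((\tfrac{N}{\gamma}-1)-1\big)=\tfrac{N}{2\gamma}-1$, and Riemann--Hurwitz $2g-2=2(2g'-2)+\gamma$ with $g=\tfrac{N-\gamma}{2}+1$ gives $\mathit{genus}(X^e_{m,n})=\tfrac{N-2\gamma}{4}+1$. Finally, the torus cases $m,n\le 4$ fall out as the specializations $(m,n)=(2,4),(4,4)$, where the same formulas give genus $1$ (and zero orders $0$); these I would also confirm by direct inspection. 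The main obstacle is thus the combinatorial count in case (1) showing every cone point is $\iota$-fixed; everything else is bookkeeping via Euler characteristic and Riemann--Hurwitz.
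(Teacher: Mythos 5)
Your strategy is genuinely different from the paper's, and most of it is sound, but it has one real gap. The paper never analyzes the involution upstairs at all: it works directly in the quotient, viewing $(Y^e_{m,n},\eta^e_{m,n})$ as the union of the semiregular polygons $P(0),\dots,P(m/2-1)$ with half the edges of $P(m/2-1)$ self-identified, develops a tangent vector around a vertex to compute each cone angle as the smallest positive multiple of $\pi(mn-m-n)/n$ lying in $2\pi\Z$, and obtains the dichotomy from the computation $\gcd(2n,m+n)=2\gamma$ or $\gamma$ according to whether $m/\gamma$ and $n/\gamma$ are both odd; the genus then comes from an Euler characteristic count $V-E+F$ of the polygonal decomposition (mirroring Proposition \ref{prop:sing1}). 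Your route --- classify the possible fixed points of the translation automorphism $\iota_\ast$ and run Riemann--Hurwitz against Theorem \ref{thm:topological_type} --- is attractive, and your case (2) is complete and arguably slicker than the paper's: even zero order forces the local group of translation symmetries at each zero (cyclic of order $k+1$) to have odd order, so $\iota_\ast$ acts freely and everything follows from $\chi(X)=2\chi(X^e)$.

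The gap is case (1). There your parity argument only shows that fixing is \emph{possible} (the zero orders are odd); the claim that $\iota_\ast$ actually fixes \emph{all} $\gamma$ cone points is asserted with a plan but not proved, and it cannot be extracted from numerology. Indeed, Riemann--Hurwitz and Gauss--Bonnet are consistent with any even number $F$ of branch points, $0\le F\le\gamma$: one would get $F$ zeros of order $\frac{mn-m-n}{2\gamma}-1$, $\frac{\gamma-F}{2}$ zeros of order $\frac{mn-m-n}{\gamma}-1$, and a genus depending on $F$, with the order sum matching $2g'-2$ for every such $F$. So the statement $F=\gamma$ is precisely the content of case (1), and it requires combinatorial input about how $\iota$ acts on the vertex identification classes --- which is essentially the same amount of work as the paper's own quotient computation. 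The cleanest way to close your gap is in fact the paper's method: develop around vertices in the quotient, or equivalently trace the vertex cycles of $P(0),\dots,P(m-1)$ under $\iota'_\ast:P(k)\mapsto P(m-1-k)$ (Proposition \ref{prop:iota_ast}) and check that each cycle is preserved exactly when $m/\gamma$ and $n/\gamma$ are both odd. Until that is done, case (1) --- and with it the genus-one claim for $(m,n)=(4,4)$, unless you verify it by hand as you suggest --- is unproven; the remaining bookkeeping (parity computation, halving of cone angles at fixed points, Euler characteristic and Riemann--Hurwitz) is correct.
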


\subsection{Decomposition into semiregular polygons}
\label{sect:semiregular_background}

The {\em $(a,b)$-semiregular $2n$-gon} is the $2n$-gon whose edge vectors (oriented counterclockwise) are given by 
$${\bf v}_i=\begin{cases} 
a (\cos \frac{i\pi}{n}, \sin \frac{i\pi}{n}) & \textrm{if $i$ is even} \\
b (\cos \frac{i\pi}{n}, \sin \frac{i\pi}{n}) & \textrm{if $i$ is odd} 
\end{cases}
$$
for $i=0, \ldots, 2n-1$. Denote this $2n$-gon by $P_n(a,b)$. 
The edges whose edge vectors are ${\bf v}_i$ for $i$ even are called {\em even edges}. The remaining edges are called {\em odd edges}. 
We restrict to the cases where $a \geq 0$ and $b \geq 0$, but $a \neq 0$ or $b \neq 0$. In the case 
where one of $a$ or $b$ is zero, $P_n(a,b)$ degenerates to a regular $n$-gon. In the case where $a=0$ or $b=0$ and $n=2$, $P_n(a,b)$ degenerates to an edge.

Note that the exterior angles of a non-degenerate semiregular $2n$-gon are all equal to $\frac{\pi}{n}$. In addition, the polygon can be inscribed in a circle. In fact, all polygons which can be inscribed in a circle and have all equal angles are similar to either a regular polygon or a semiregular polygon. 

\begin{figure}[h]
\begin{center}
\includegraphics{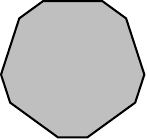}
\caption{The semiregular polygon $P_5(1,2).$}
\label{fig:semiregular}
\end{center}
\end{figure}

Fix $m$ and $n$. Define the polygons $P(k)$ for $k=0, \ldots, m-1$ by
\begin{equation}
P(k)=
\begin{cases}
P_n( \sin \frac{(k+1) \pi}{m}, \sin \frac{k \pi}{m}) & \textrm{if $n$ is odd} \\
P_n(\sin \frac{k \pi}{m}, \sin \frac{(k+1) \pi}{m}) & \textrm{if $n$ is even and $k$ is even} \\
P_n(\sin \frac{(k+1) \pi}{m},\sin \frac{k \pi}{m}) & \textrm{if $n$ is even and $k$ is odd.}
\end{cases}
\end{equation}
We form a surface by identifying the edges of the polygons in pairs. For $k$ odd, we identify the even sides of $P(k)$ with the opposite side
of $P(k+1)$, and identify the odd sides of $P(k)$ with the opposite side of $P(k-1)$. 
The cases in the definition of $P(k)$ are chosen so that this gluing makes sense.
We call the resulting surface $(Y_{m,n}, \eta_{m,n})$.
See examples in figures \ref{fig:sp64} and \ref{fig:sp45}.
Ronen Mukamel independently discovered these surfaces.

\begin{figure}[h]
\begin{center}
\includegraphics{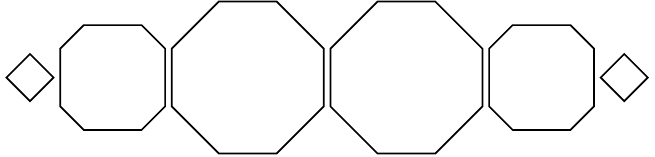}
\caption{These polygons make up one component of the surface $(Y_{6,4}, \eta_{6,4})$. These are the polygons $P(0)$, $P(1)$, $P(2)$, $P(3)$, $P(4)$ and $P(5)$ from left to right.}
\label{fig:sp64}
\end{center}
\end{figure}

\begin{figure}[h]
\begin{center}
\includegraphics{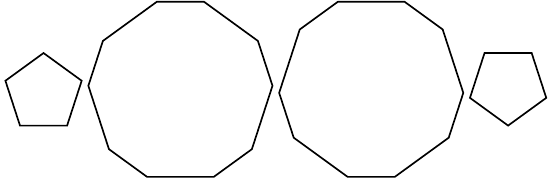}
\caption{These polygons make up one component of the surface $(Y_{4,5}, \eta_{4,5})$. These are the polygons $P(0)$, $P(1)$, $P(2)$ and $P(3)$ from left to right.}
\label{fig:sp45}
\end{center}
\end{figure}

\begin{theorem}[Semiregular decomposition]
\label{thm:semiregular}
There are affine homeomorphisms 
$$\mu:(X_{m,n}, \omega_{m,n}) \to (Y_{m,n}, \eta_{m,n}) \quad \text{and} \quad
\nu:(X_{m,n}, \omega_{m,n}) \to (Y_{n,m}, \eta_{n,m})$$ 
with derivatives
$$D(\mu)=\left[\begin{array}{rr}
\csc \frac{\pi}{n} & -\cot \frac{\pi}{n} \\
0 & 1 
\end{array}\right]
\quad \textrm{and} \quad
D(\nu)=\left[\begin{array}{rr}
-\csc \frac{\pi}{m} & -\cot \frac{\pi}{m} \\
0 & 1 
\end{array}\right].
$$
\end{theorem}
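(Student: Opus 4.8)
The plan is to realize $\mu$ as a single scissors congruence. I apply the linear map $D(\mu)$ to the rectangle decomposition of $(X_{m,n},\omega_{m,n})$ and show that the resulting parallelograms reassemble, by translations alone, into the semiregular polygons $P(k)$. Since $D(\mu)$ is linear it carries the translation edge-identifications coming from the ribbon graph $\G_{m,n}$ to translation identifications of the images, so a map with derivative $D(\mu)$ is automatically affine; the only content is that the image translation surface is translation equivalent to $(Y_{m,n},\eta_{m,n})$. The map $\nu$ is then handled by the same argument after interchanging the roles of the two grid indices.

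For the setup I would first record that the eigenvalue of $\G_{m,n}$ for the eigenfunction \eqref{eq:eigenfunction} is $\lambda=2\cos\frac\pi m+2\cos\frac\pi n$ (a one-line check using $\sin\frac{(i-1)\pi}{m}+\sin\frac{(i+1)\pi}{m}=2\cos\frac\pi m\sin\frac{i\pi}{m}$, the boundary terms vanishing since $\sin 0=\sin\pi=0$). Next I compute $D(\mu)^{-1}=\begin{pmatrix}\sin\frac\pi n & \cos\frac\pi n\\ 0 & -1\end{pmatrix}$ and observe that it sends the $i$-th edge vector $\v_i$ of a semiregular $2n$-gon $P_n(a,b)$ to the vector with horizontal component $\ell_i\sin\frac{(i+1)\pi}{n}$ and vertical component $-\ell_i\sin\frac{i\pi}{n}$, where $\ell_i\in\{a,b\}$ is the length of $\v_i$; this is exactly the angle-addition identity $\sin\frac\pi n\cos\frac{i\pi}{n}+\cos\frac\pi n\sin\frac{i\pi}{n}=\sin\frac{(i+1)\pi}{n}$. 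In particular $D(\mu)$ fixes the horizontal direction and sends the vertical direction to the edge direction $-\pi/n$ (the direction of $\v_{2n-1}$), so the two transverse edge-directions $0$ and $-\pi/n$ of the polygons $P(k)$ are precisely the images of the horizontal and vertical directions of $X_{m,n}$.

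The main step is the assembly. I organize the correspondence so that the index $k$ labelling $P(k)$ tracks the first grid index and the angular index of the $2n$-gon tracks the second, consistent with the parameters of $P(k)$ being $\sin\frac{k\pi}{m}$ and $\sin\frac{(k+1)\pi}{m}$. With $a=\sin\frac{(k+1)\pi}{m}$ the edge-image identity shows that $\v_i$ has horizontal extent $\sin\frac{(k+1)\pi}{m}\sin\frac{(i+1)\pi}{n}=w(v_{k+1,i+1})$ and vertical extent $\sin\frac{(k+1)\pi}{m}\sin\frac{i\pi}{n}=w(v_{k+1,i})$; these are exactly the widths $w\circ\beta(e)$ and heights $w\circ\alpha(e)$ of the rectangles $R_e$. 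I would then show that the parallelograms $D(\mu)(R_e)$, glued along the images of the $\e$- and $\n$-identifications, tile each $P(k)$, that the edges left free by this gluing are the $2n$ prescribed edges $\v_0,\dots,\v_{2n-1}$, and that the remaining identifications reproduce the rule ``even sides of $P(k)$ to the opposite side of $P(k+1)$, odd sides to $P(k-1)$.'' The parity-of-$n$ cases in the definition of $P(k)$ correspond to whether the angular index swaps the roles of $a$ and $b$ and are checked separately.

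The hard part will be the purely combinatorial bookkeeping in this assembly step: matching the ribbon structure of $\G_{m,n}$ — in particular the clockwise/counter-clockwise rule of Convention~\ref{permutation_convention} defining $\e$ and $\n$ — to the even/odd gluing rule of the semiregular polygons, and verifying that the cone points (equivalence classes of polygon vertices) are consistent; everything metric then reduces to the single trigonometric identity above together with the value of $\lambda$. For $\nu:(X_{m,n},\omega_{m,n})\to(Y_{n,m},\eta_{n,m})$ I would run the identical argument but group the same rectangles into $2m$-gons rather than $2n$-gons, letting the angular index run in the $m$-direction; this interchange of the two grid directions reverses orientation, which accounts for the extra sign in the top-left entry of $D(\nu)$ relative to $D(\mu)$.
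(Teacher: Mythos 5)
Your preliminary computations are correct, and they coincide with the trigonometric core of the paper's own argument (lemma \ref{lem:mu}); likewise your treatment of $\nu$ by interchanging the two grid indices is exactly the paper's corollary \ref{cor:e}. The gap is in your main assembly step. You propose to show that the \emph{whole} parallelograms $D(\mu)(R_e)$, glued along the $\e$- and $\n$-identifications, tile each $P(k)$, with the free boundary edges being ${\bf v}_0,\dots,{\bf v}_{2n-1}$. That is geometrically impossible: every edge of every parallelogram $D(\mu)(R_e)$ points in one of only two directions (the images of horizontal and vertical, namely $0$ and $-\pi/n$), so the boundary of any edge-glued union of such parallelograms also lies in those two directions, whereas $\partial P(k)$ has edges in $n$ distinct directions (so for $n\geq 3$ there is an immediate contradiction). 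Your own extent computation exhibits the obstruction: $D(\mu)^{-1}{\bf v}_i$ has horizontal extent $w(v_{k+1,i+1})$ and vertical extent $w(v_{k+1,i})$, which identifies it as a \emph{diagonal} of the rectangle attached to the vertical grid edge $\overline{v_{k+1,i}\,v_{k+1,i+1}}$ --- not as an edge of any rectangle. Consequently those vertical-edge rectangles can lie neither wholly inside nor wholly outside the preimage of $P(k)$: the boundary of $P(k)$ cuts each of them in half.

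The missing idea is precisely this cutting, and it is what the paper supplies. In lemma \ref{lem:mu} the region $Q(k)$ that maps onto $P(k)$ is defined in equation \ref{eq:qpolygon} as the union of whole rectangles $R(e)$ only for the \emph{horizontal} grid edges $e$ in the $k$-th column, together with the lower and upper triangles obtained by slicing the adjacent vertical-edge rectangles along their positive diagonals; each such rectangle donates one triangle to $Q(k)$ and the other to $Q(k\pm 1)$, and the shared diagonal becomes a pair of identified edges of neighboring polygons. You would also need the paper's second device, the augmented graph $\G'_{m,n}$ with degenerate (zero-width or zero-height) rectangles: without it the extreme polygons $P(0)$ and $P(m-1)$, which are degenerate semiregular $2n$-gons (regular $n$-gons), do not fit a uniform column-by-column scheme, since half of their prospective edges collapse to points. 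With these two corrections your outline becomes, in substance, the paper's proof; note also that your computation of the eigenvalue $\lambda=2\cos\frac{\pi}{m}+2\cos\frac{\pi}{n}$, while correct, is never actually used in this argument.
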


Of course, when $m$ and $n$ are even, the surface $(Y_{m,n}, \eta_{m,n})$ is not primitive. In this case, the polygons $P(i)$ and $P(m-1-i)$ differ only by translation.
In particular, there is an automorphism $\iota'_\ast:(Y_{m,n}, \eta_{m,n}) \to (Y_{m,n}, \eta_{m,n})$ which interchanges $P(i)$ and $P(m-1-i)$ for $i=0,1, \ldots, m/2-1$ and for which
$D(\iota'_\ast)=I$. Moreover, $\iota'_\ast=\mu \circ \iota_\ast \circ \mu^{-1}$. (See proposition \ref{prop:iota_ast}.)
We use $(Y^e_{m,n}, \eta^e_{m,n})$ to denote $(Y_{m,n}, \eta_{m,n})/\iota'_\ast$. Given this, the following is a corollary the theorem above.

\begin{corollary}
\label{cor:semiregular2}
For $m$ and $n$ even, there are affine homeomorphisms $\mu^e:(X_{m,n}^e, \omega_{m,n}^e) \to (Y^e_{m,n}, \eta^e_{m,n})$ and
$\nu^e:(X_{m,n}^e, \omega_{m,n}^e) \to (Y^e_{n,m}, \eta^e_{n,m})$, with $D(\mu^e)=D(\mu)$ and $D(\nu^e)=D(\nu)$.
\end{corollary}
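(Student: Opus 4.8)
The plan is to descend the affine homeomorphisms $\mu$ and $\nu$ of Theorem \ref{thm:semiregular} through the order-two quotients. The organizing principle is a general descent statement: if $f:(X,\omega)\to(Z,\zeta)$ is an affine homeomorphism, $\tau$ is a translation automorphism (one with derivative $I$) of $(X,\omega)$, and $f\circ\tau\circ f^{-1}$ is a translation automorphism $\tau'$ of $(Z,\zeta)$, then $f$ carries $\tau$-orbits to $\tau'$-orbits and hence induces a homeomorphism $\bar f:(X,\omega)/\tau\to(Z,\zeta)/\tau'$ commuting with the two quotient projections $\pi_X$ and $\pi_Z$. Because the deck groups of these projections consist of maps with derivative $I$, the projections are local isometries for the translation structures; reading $\bar f=\pi_Z\circ f\circ\pi_X^{-1}$ in local charts then shows that $\bar f$ is affine with $D(\bar f)=D(f)$. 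Note that $\iota_\ast$ is indeed a translation automorphism, since $D(\iota'_\ast)=I$ together with $\iota'_\ast=\mu\circ\iota_\ast\circ\mu^{-1}$ forces $D(\iota_\ast)=I$.

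First I would apply this principle to $\mu$. By Proposition \ref{prop:iota_ast} we have $\iota'_\ast=\mu\circ\iota_\ast\circ\mu^{-1}$, so its hypothesis holds with $\tau=\iota_\ast$ and $\tau'=\iota'_\ast$. Since $(X^e_{m,n},\omega^e_{m,n})=(X_{m,n},\omega_{m,n})/\iota_\ast$ and $(Y^e_{m,n},\eta^e_{m,n})=(Y_{m,n},\eta_{m,n})/\iota'_\ast$ by definition, the descent principle directly produces $\mu^e$ with $D(\mu^e)=D(\mu)$.

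The work is in handling $\nu:(X_{m,n},\omega_{m,n})\to(Y_{n,m},\eta_{n,m})$. Here I must verify that $\nu\circ\iota_\ast\circ\nu^{-1}$ coincides with the involution $\iota'_\ast$ on $(Y_{n,m},\eta_{n,m})$ used to define $(Y^e_{n,m},\eta^e_{n,m})$, that is, the derivative-$I$ involution exchanging $P(k)$ and $P(n-1-k)$ in the $(n,m)$-decomposition. The map $\nu\circ\iota_\ast\circ\nu^{-1}$ is automatically a translation automorphism, since its derivative is $D(\nu)\,I\,D(\nu)^{-1}=I$, and it is an involution; the only issue is to identify the permutation it induces on the polygons of $(Y_{n,m},\eta_{n,m})$. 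I would do this by tracking the graph symmetry $\iota(v_{i,j})=v_{m-i,n-j}$ of $\G_{m,n}$ through $\nu$ exactly as Proposition \ref{prop:iota_ast} does for $\mu$: the same symmetry seen through the vertical rectangle decomposition of $(X_{m,n},\omega_{m,n})$ underlying $\nu$ exchanges $P(k)$ and $P(n-1-k)$ of $Y_{n,m}$, which is precisely $\iota'_\ast$ for the pair $(n,m)$. Once this is verified, the descent principle again yields $\nu^e$ with $D(\nu^e)=D(\nu)$.

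The main obstacle is this last identification. Because $\nu$ is built from the vertical rather than the horizontal decomposition of $(X_{m,n},\omega_{m,n})$, one cannot reuse Proposition \ref{prop:iota_ast} verbatim and must redo the bookkeeping matching the vertex involution $\iota$ to the polygon exchange in $Y_{n,m}$. An alternative that sidesteps the bookkeeping would be to show that the group of translation automorphisms of $(X_{m,n},\omega_{m,n})$ is exactly $\{\mathrm{id},\iota_\ast\}\cong\Z/2\Z$, and likewise for $(Y_{n,m},\eta_{n,m})$; then any affine homeomorphism, in particular $\nu$, must conjugate the nontrivial translation automorphism on one side to the nontrivial one on the other, forcing $\nu\circ\iota_\ast\circ\nu^{-1}=\iota'_\ast$ with no further computation.
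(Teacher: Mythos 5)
Your proposal is correct, and it is the same descent argument the paper has in mind: the paper presents the corollary as immediate from theorem \ref{thm:semiregular} together with proposition \ref{prop:iota_ast}, which is precisely the principle you formalize. The one place where your route genuinely diverges is the case of $\nu$, which you flag as the main obstacle. In the paper, $\nu$ is constructed (in the proof of theorem \ref{thm:semiregular}) as $\nu=\mu'\circ\rho$, where $\mu':(X_{n,m},\omega_{n,m})\to(Y_{n,m},\eta_{n,m})$ is lemma \ref{lem:mu} applied to the pair $(n,m)$, and $\rho$ is the affine homeomorphism of corollary \ref{cor:e}, induced by the graph isomorphism $v_{i,j}\mapsto v_{j,i}$. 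That graph isomorphism visibly conjugates the involution $v_{i,j}\mapsto v_{m-i,n-j}$ of $\G_{m,n}$ to the involution $v_{i,j}\mapsto v_{n-i,m-j}$ of $\G_{n,m}$, so $\rho$ intertwines $\iota_\ast$ on $(X_{m,n},\omega_{m,n})$ with the corresponding involution on $(X_{n,m},\omega_{n,m})$ for free; proposition \ref{prop:iota_ast} applied to the pair $(n,m)$ then yields $\nu\circ\iota_\ast\circ\nu^{-1}=\iota'_\ast$ on $(Y_{n,m},\eta_{n,m})$. This one-line graph-level check replaces both of your proposed work-arounds: redoing the polygon bookkeeping for the vertical decomposition (valid, but cumbersome), and classifying the translation automorphism group as $\Z/2\Z$ (also true, but that classification itself requires an argument you have not supplied).
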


We will now give explicit formulas for the Riemann surfaces and $1$-forms for the surfaces $(Y_{m,n}, \eta_{m,n})$ and $(Y_{m,n}^e, \eta_{m,n}^e)$. 
Compare the following to \cite[theorem 5.15]{BM}. (All but the third formula, which is closely connected to the second, appear in \cite{BM}.
However, the formulas are applied more broadly here.)

\begin{proposition}
\label{prop:formulas}
Assume $mn \geq 6$. We have the following formulas for the primitive Riemann surfaces and holomorphic differentials
$(Y_{m,n}, \eta_{m,n})$ and $(Y^e_{m,n}, \eta^e_{m,n})$
(up to scaling and rotating). 
\begin{enumerate}
\item If $m$ is odd, then $Y_{m,n}$ is defined by 
$\displaystyle y^{2n}=(u-2)\prod_{j=1}^{(m-1)/2} \Big(u-2 \cos \frac{2j\pi}{m}\Big)^2$,
and 
$\displaystyle \eta_{m,n}=\frac{y~\d u}{(u-2) \prod_{j=1}^{(m-1)/2} (u-2 \cos \frac{2j\pi}{m})}.$
\vspace{0.5em}

\item If $m$ is even and $n$ is odd, then 
$Y_{m,n}$ is defined by 
$\displaystyle y^{2n}=(u-2)^n\prod_{j=1}^{m/2} \Big(u-2 \cos \frac{(2j-1)\pi}{m}\Big)^2$,
and 
$\displaystyle \eta_{m,n}=\frac{y~\d u}{(u-2) \prod_{j=1}^{m/2} (u-2 \cos \frac{(2j-1)\pi}{m})}$.
\vspace{0.5em}

\item If both $m$ and $n$ are even, then $Y^e_{m,n}$ is defined by
$\displaystyle y^{n}=(u-2)^\frac{n}{2} \prod_{j=1}^{m/2} \Big(u-2 \cos \frac{(2j-1)\pi}{m}\Big)$,
and
$\displaystyle \eta^e_{m,n}=\frac{y~\d u}{(u-2) \prod_{j=1}^{m/2} (u-2 \cos \frac{(2j-1)\pi}{m})}$.
\end{enumerate}
\end{proposition}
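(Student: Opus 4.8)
The plan is to realize each of these surfaces as a connected cyclic branched cover of $\mathbb{P}^1$ on which $\eta$ is an eigenform for the deck group, and then to pin down both the cover equation and the form from their ramification and divisor data. I would work with the semiregular model $(Y_{m,n},\eta_{m,n})$ directly, importing the genus and the number and orders of zeros from $(X_{m,n},\omega_{m,n})$ through the affine homeomorphism $\mu$ of Theorem~\ref{thm:semiregular} (an affine map preserves the genus and the order of each zero) together with the stratum computations in Theorems~\ref{thm:topological_type} and~\ref{thm:topological_type2}.

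\textbf{Step 1: the cyclic automorphism and the eigenform.} Each semiregular $2n$-gon $P_n(a,b)$ is invariant under rotation by $2\pi/n$ about its center, and rotating every $P(k)$ simultaneously gives an automorphism $T$ of $(Y_{m,n},\eta_{m,n})$ of order $n$ with $T^\ast\eta_{m,n}=e^{2\pi i/n}\eta_{m,n}$. I claim $T$ is the square of an order-$2n$ conformal automorphism $S$ with $D(S)$ equal to rotation by $\pi/n$. Such an $S$ exists because, by Theorem~\ref{thm:veech_groups}, $\GL(X_{m,n},\omega_{m,n})$ contains the elliptic element $BC$ with $(BC)^n=\pm I$; since $-I\in\langle A,B,C\rangle$, the element $\pm BC$ is conjugate to rotation by $\pi/n$ and has order $2n$, and transporting the corresponding affine automorphism through $\mu$ produces $S$ with $S^\ast\eta_{m,n}=e^{\pm i\pi/n}\eta_{m,n}$. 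Thus $\langle S\rangle\cong\Z/2n\Z$ acts on $Y_{m,n}$ with $\eta_{m,n}$ a primitive eigenform, exhibiting $\pi\colon Y_{m,n}\to B:=Y_{m,n}/\langle S\rangle$ as a connected cyclic cover of degree $2n$ (cases (1) and (2)). In the even--even case I pass first to $Y^e_{m,n}=Y_{m,n}/\iota'$ by Corollary~\ref{cor:semiregular2}, where the surviving rotation has order $n$, producing the degree-$n$ cover behind formula (3).

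\textbf{Step 2: the base $\mathbb{P}^1$ and its marked points.} I would show $B$ has genus $0$ by comparing the Riemann--Hurwitz formula for a degree-$2n$ cyclic cover of a genus-$g'$ surface with the value of $\mathit{genus}(X_{m,n})$ in Theorem~\ref{thm:topological_type}; the ramification forces $g'=0$. The branch points are the images of the $S$-orbits with nontrivial stabilizer, which are exactly the $m$ centers of the polygons $P(k)$ (these are the fixed points of $T=S^2$) and the $\gamma$ cone points of $\eta_{m,n}$ (which I expect to lie over $u=\infty$). For $m$ odd the $m$ centers split as one $S$-fixed center, to be placed at $u=2$, together with $(m-1)/2$ pairs interchanged by $S$, to be placed at the values $u=2\cos\frac{2j\pi}{m}$; for $m$ even the analogous values are $2\cos\frac{(2j-1)\pi}{m}$. \textbf{This normalization is the main obstacle.} Producing the $S$-invariant coordinate $u$ on $B$ and proving that the centers land at precisely these cosines is not formal: it requires a genuine computation with the flat geometry of the semiregular chain, where the arrangement of the $P(k)$ is controlled by the transverse order-$2m$ rotation coming from the relation $(AC)^m=-I$. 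Equivalently, one recognizes $2\cos\frac{j\pi}{m}$ as the eigenvalues of the $m$-direction path graph factor of $\G_{m,n}$, visible in the eigenfunction $w(v_{i,j})=\sin\frac{i\pi}{m}\sin\frac{j\pi}{m}$; getting the parity-dependent even/odd multiples of $\pi/m$ correct is the delicate point.

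\textbf{Step 3: exponents and identification of the form.} With the branch points located, the local monodromy of the cyclic cover fixes the exponents in $y^{2n}=\prod(u-u_i)^{a_i}$: the totally ramified $S$-fixed center at $u=2$ forces an exponent coprime to the degree (giving $(u-2)^1$ when $m$ is odd, and $(u-2)^n$ when $m$ is even and $n$ is odd), each $T$-fixed pair gives ramification index $n$ and hence exponent $2$, and the $\gamma$ points over $\infty$ determine the total degree of the product modulo $2n$. A Riemann--Hurwitz cross-check must then reproduce the genus of Theorems~\ref{thm:topological_type} and~\ref{thm:topological_type2}. Finally, $\eta_{m,n}$ is the unique holomorphic $\zeta_{2n}$-eigenform (up to scale) with the divisor forced by the stratum, so it suffices to verify that $\frac{y\,\d u}{(u-2)\prod_j (u-2\cos\frac{2j\pi}{m})}$ is holomorphic with exactly $\gamma$ zeros of the prescribed order; this is a divisor computation at the branch points. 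The even--even statement (3) then follows from the same analysis carried out on $Y^e_{m,n}$, where the quotient by $\iota'$ halves both the degree and the exponents, converting $y^{2n}$ into $y^{n}$ and $(u-2)^n$ into $(u-2)^{n/2}$.
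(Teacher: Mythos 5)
Your outline correctly captures the structure of these surfaces (the order-$2n$ rotation coming from the orthogonal group, the quotient sphere, the exponents forced by the local monodromy at the polygon centers and at the cone points, and the divisor check for the eigenform), but it does not prove the proposition, and you have flagged the reason yourself: Step 2, the placement of the branch points at $u=2$ and $u=2\cos\frac{2j\pi}{m}$ (resp.\ $2\cos\frac{(2j-1)\pi}{m}$), is exactly the content of the statement, and nothing in your Steps 1 and 3 can supply it. Genus, ramification indices, and monodromy are purely topological invariants; for $m\geq 4$ there are more than three branch points on $\mathbb{P}^1$, so their cross-ratios are genuine conformal moduli. Any other choice of real branch points with the same exponents would produce a cyclic cover with identical topology, deck group, and eigenform divisor, but a different Riemann surface. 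So the cosine values cannot be ``pinned down from ramification and divisor data''; they require an analytic computation tying the flat metric $|\eta_{m,n}|$ to the conformal coordinate $u$. Your closing remark that one ``recognizes $2\cos\frac{j\pi}{m}$ as the eigenvalues of the path graph factor'' is not such an argument: the Perron--Frobenius data determines the flat structure, but there is no a priori identification of adjacency eigenvalues with positions of branch points on the quotient sphere -- that identification is a consequence of the formula, not a proof of it.

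This missing analytic step is precisely what the paper's proof supplies, by a different route. Instead of the cyclic group, the paper quotients by the full set of reflection automorphisms, obtaining a simply connected polygonal fundamental domain $D$ with $m/2+2$ vertices (in the even--even case), and then exhibits an explicit Schwarz--Christoffel map
$$f(u)=K \int_{i}^u (z-2)^{-\frac{1}{2}} \prod_{j=1}^{m/2} \Big(z-2 \cos \tfrac{(2j-1)\pi}{m}\Big)^{\frac{1}{n}-1}~dz$$
from the upper half plane onto $D$. The Schwarz--Christoffel exponents make the angles match automatically; the real work (lemma \ref{lem:formula}) is the verification, via the substitutions $z=2\cos 2w$ and $x=w-\frac{j\pi}{m}$, that with the branch points at those particular cosines the side lengths $\ell_j$ of the image polygon come out proportional to $\cos\frac{j\pi}{m}$, which equals the flat distance between consecutive polygon centers $C(m/2-j)$ and $C(m/2-j-1)$ up to a common factor. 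That integral computation is the bridge between the flat geometry and the conformal coordinate that your proposal lacks. To complete your approach you would either have to carry out an equivalent period/length computation (or invoke hypergeometric machinery as in Bouw--M\"oller), at which point you would essentially be reproducing the paper's lemma inside your Step 2.
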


\begin{corollary}
\label{cor:same}
When $m$ and $n$ are relatively prime, $(Y_{m,n}, \eta_{m,n})$ is the same as a surface constructed in \cite[theorem 5.15]{BM}.
\end{corollary}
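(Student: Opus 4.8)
The plan is to reduce to the relevant cases of Proposition \ref{prop:formulas} and then match the resulting algebraic data against the formula of Bouw and M\"oller. Since $\gcd(m,n)=1$, the integers $m$ and $n$ cannot both be even, so the third case of Proposition \ref{prop:formulas} (which requires both even) never occurs. Thus exactly one of two situations holds: $m$ is odd, which is case (1), or $m$ is even and $n$ is odd, which is case (2). In either case Proposition \ref{prop:formulas} exhibits $(Y_{m,n},\eta_{m,n})$ explicitly as a cyclic cover $u\colon Y_{m,n}\to\P^1$ of degree $2n$, defined by an equation $y^{2n}=\prod_i (u-r_i)^{e_i}$, together with the differential $\eta_{m,n}=y\,\d u/\prod_i(u-r_i)$, where the branch points $r_i$ are the value $u=2$ and the values $2\cos\frac{2j\pi}{m}$ in case (1), respectively $2\cos\frac{(2j-1)\pi}{m}$ in case (2).

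First I would write down the precise datum furnished by Proposition \ref{prop:formulas} in cases (1) and (2): for each branch point $r_i$ I record its exponent $e_i$ in the defining equation, the degree $2n$ of the cover, and the induced order of vanishing of $\eta_{m,n}$. Next I would recall the corresponding curve and differential produced by \cite[theorem 5.15]{BM} for the relatively prime pair $(m,n)$, this being precisely the range in which \cite{BM} supply explicit algebraic formulas. As the remark preceding Proposition \ref{prop:formulas} already observes, formulas (1) and (2) are exactly the ones appearing in \cite{BM}; so the comparison reduces to checking that the branch locus, the exponents $e_i$, and the degree $2n$ of the cyclic cover coincide under the natural identification of the two base coordinates, after which the triples (Riemann surface, cyclic cover, holomorphic one--form) agree and the two translation surfaces are the same.

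The main obstacle is entirely one of conventions rather than of mathematical content. Bouw and M\"oller may normalize their curve by a M\"obius change of the $u$--coordinate or by a different labeling of the triangle parameters, so I must verify that the pair $(m,n)$ used here corresponds to the correct pair in their construction, and that any remaining discrepancy in the base coordinate is an automorphism of $\P^1$ carrying branch data to branch data, hence absorbed by the phrase \emph{up to scaling and rotating} in Proposition \ref{prop:formulas}. Once the branch points $2\cos(\cdot)$ together with $u=2$, the cyclic exponents $e_i$, and the cover degree are matched in both cases (1) and (2), the identification of $(Y_{m,n},\eta_{m,n})$ with the Bouw--M\"oller surface follows. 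I expect this bookkeeping to be brief, precisely because coprimality is the setting in which \cite{BM} already record these formulas.
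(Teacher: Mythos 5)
Your proposal is correct and follows essentially the same route as the paper: the paper's corollary is an immediate consequence of Proposition \ref{prop:formulas}, since $\gcd(m,n)=1$ forces $m$ and $n$ not both even, so case (1) or (2) applies, and those formulas are precisely the ones recorded in \cite[theorem 5.15]{BM}. Your additional care about normalization conventions is sensible but does not change the argument, which is exactly the paper's implicit one.
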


When $\gcd(m,n) \neq 1$, it remains to describe the relationship between the surfaces with Veech groups commensurable to $\Delta(m,n,\infty)$ constructed in
this paper and those constructed by Bouw and M\"oller \cite{BM}. Work of Wright has
recently shown that these surfaces are always the same \cite{Wright12}. 

\subsection{Locations of proofs}
\label{sect:outline}

The author has strived to make the proofs of each major result above readable independently. The paper has been separated into the following sections.
The semiregular decomposition theorem is the main tool of the paper. We prove it in section \ref{sect:semiregular}.
We study the topology of these surfaces in section \ref{sect:topology}. 
In section \ref{sect:veech_group}, we compute the Veech groups of these surfaces.
We prove our primitivity results in section \ref{sect:primitivity}. Section \ref{sect:formulas} discusses our formulas for the Riemann surfaces and $1$-forms given in proposition \ref{prop:formulas}. Finally, in section \ref{sect:not_veech},
we discuss the proof of Theorem \ref{thm:non_veech_triangle_groups}
which states that some triangle groups are not Veech groups.

\begin{remark}
In this version of the paper, nearly all results are proved in terms of the semiregular decomposition, and the grid graph description of the surfaces is
just a bridge between the affinely equivalent surfaces $(Y_{m,n}, \eta_{m,n})$ and $(Y_{n,m}, \eta_{n,m})$. It is possible, though more cumbersome, to compute the Veech group
and topology of these surfaces through the grid graph description. This was the point of view of an earlier version of this paper \cite{HooperBouwMoller}.
\end{remark}

\section{The semiregular polygon decomposition}
\label{sect:semiregular}

\def\d{{\bf d}}

In this section we prove theorem \ref{thm:semiregular}, which provides a decomposition of the surface $(X_{m,n}, \omega_{m,n})$ into semiregular polygons,
up to an affine transformation. The theorem provides two such decompositions. We will first prove the existence of $\mu:(X_{m,n}, \omega_{m,n}) \to (Y_{m,n}, \eta_{m,n})$, which provides a 
decomposition of $(X_{m,n}, \omega_{m,n})$ into semiregular $2n$-gons, up to an affine transformation. This is the difficult part of the theorem. 
Then, we will analyze the subgroup of $\SL(2,\R)$ which preserves
the set of horizontal and vertical directions. This is a dihedral group of order $8$. We will see that $(X_{m,n}, \omega_{m,n})$ and $(X_{n,m}, \omega_{n,m})$ differ only by an element of
this dihedral group. In particular, the existence of $\mu$ will imply the existence of a $\nu:(X_{m,n}, \omega_{m,n}) \to (Y_{n,m}, \eta_{n,m})$.

\subsection{The existence of $\mu:(X_{m,n}, \omega_{m,n}) \to (Y_{m,n}, \eta_{m,n})$}
\label{sect:mu}
We begin by describing a decomposition of $(X_{m,n}, \omega_{m,n})$ into polygons. These will be the analogs of the polygons $P(0), \ldots, P(m-1)$ making up $(Y_{m,n}, \eta_{m,n})$. 

For ease of exposition, we consider the augmented graph $\G'_{m,n}$ obtained by attaching {\em degenerate nodes} and {\em degenerate edges}
to the graph $\G_{m,n}$. 
The nodes of $\G_{m,n}$ are in bijection with the
coordinates $(i,j) \in \Z^2$ with $0<i<m$ and $0<j<n$. The nodes of $\G_{m,n}'$ will be in bijection with those $(i,j) \in \Z^2$ with $0 \leq i \leq m$ and $0 \leq j \leq n$. 
Our added nodes are called {\em degenerate nodes}.
We join new {\em degenerate edges} between nodes of distance one in the plane that are not already joined by an edge. 
Our graph $\G_{m,n}'$ is also bipartite, and we follow the same naming conventions for nodes as when discussing $\G_{m,n}$. See equation \ref{eq:node_labels},
and the text below.
An example graph is shown in figure \ref{fig:degenerate_graph}.

Let $\E'$ denote the set of all edges of $\G'_{m,n}$.
We call a degenerate edge $e \in \E'$ {\em $\Alpha$-degenerate}, {\em $\Beta$-degenerate} or {\em completely degenerate} if
$\partial e$ contains a degenerate $\Alpha$-node, a degenerate $\Beta$-node or both, respectively.
We also define permutations $\e',\n':\E'\to \E'$ following convention \ref{permutation_convention}.

\begin{figure}[h]
\begin{center}
\includegraphics[height=3in]{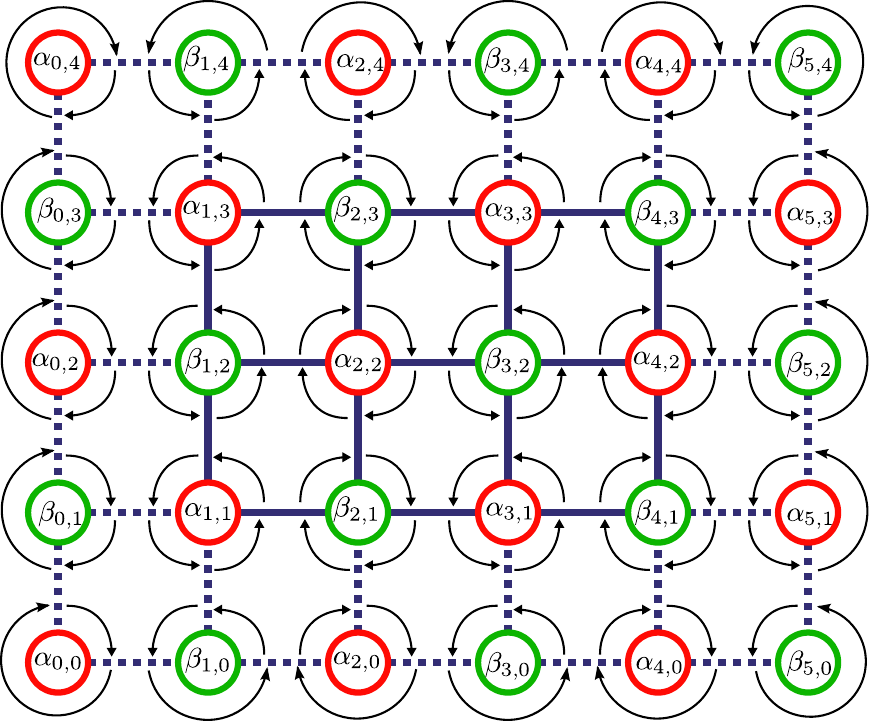}
\caption{The augmented graph $\G'_{5,4}$. The degenerate edges are drawn as dotted lines. The map $\e'$ is
given by the arrows surrounding the $\alpha$ vertices, and the map $\n'$ is given by the arrows surrounding the $\beta$ vertices.}
\label{fig:degenerate_graph}
\end{center}
\end{figure}

These degenerate edges correspond to degenerate rectangles on our surface $(X_{m,n}, \omega_{m,n})$. 
A {\em degenerate rectangle} is a rectangle with zero width or zero height. (The added nodes correspond to cylinders of zero width according to 
equation \ref{eq:eigenfunction}.)
The $\Alpha$-degenerate edges correspond to horizontal saddle connections (rectangles with zero height)
and the $\Beta$-degenerate edges correspond to vertical saddle connections. The completely degenerate edges correspond to points on our surface.

Each edge $e \in \E'$ corresponds to a rectangle (or degenerate rectangle) $R_e=R(e)$ in the surface $(X_{m,n}, \omega_{m,n})$ with horizontal and vertical sides. 
The {\em positive diagonal} of a rectangle with horizontal and vertical sides is the diagonal with positive slope. For a degenerate rectangle,
we take the positive diagonal to be the rectangle itself.
Let $\d(e)$ denote the vector which points along the positive diagonal, oriented rightward and upward.
The {lower triangle}, denoted $L(e)$, of a rectangle $R(e)$ is the triangle below the positive diagonal. The {upper triangle}, $U(e)$ is the triangle above the positive diagonal. See figure \ref{fig:rectangle}. For degenerate rectangles, we take $R(e)=L(e)=U(e)$ to be the corresponding saddle connection, or point.

\begin{figure}[h]
\begin{center}
\includegraphics[height=0.4in]{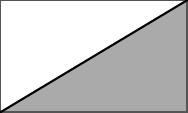}
\caption{A rectangle's positive diagonal. The lower triangle is shaded gray, and the upper triangle is white.}
\label{fig:rectangle}
\end{center}
\end{figure}

Recall that $\G'_{m,n}$ is naturally embedded in $\Z^2$. We use $v_{i,j}$ to denote the node of $\G'_{m,n}$ in the position $(i,j)$. 
We now define our decomposition of $(X_{m,n}, \omega_{m,n})$ into polygons. Let $H_k$ denote the set of edges of $G'_{m,n}$, 
\begin{equation}
\label{eq:Hk}
H_k=\{\overline{v_{k,i} v_{k+1,i}} \in \E' ~:~ 0 < i < n\} \quad 
\textrm{for $k=0,\ldots,m-1$}.
\end{equation}
($\bigcup_k H_k$ is the set of horizontal edges in the graph $\G'_{m,n}$, and the edges in each $H_k$ lie in a column.)
For each such $k$ define the polygon $Q(k) \subset (X_{m,n}, \omega_{m,n})$ by
\begin{equation}
\label{eq:qpolygon}
Q(k)=\bigcup_{e \in H_k} R(e) \cup L\big(\n'(e)\big) \cup L\big(\e'^{-1}(e)\big) \cup U\big(\n'^{-1}(e)\big) \cup U\big(\e'(e)\big).
\end{equation}
An example decomposition is shown in figure \ref{fig:stairs}.

\begin{figure}[h]
\begin{center}
\includegraphics[width=5.6in]{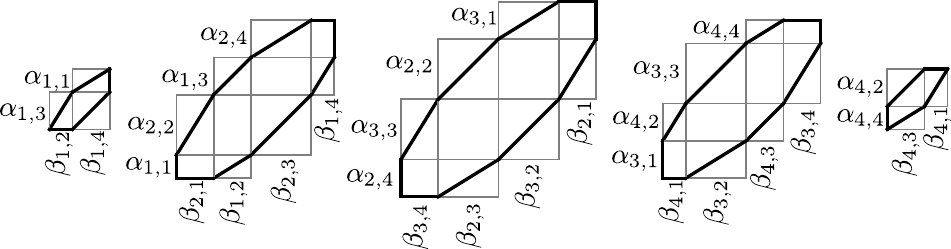}
\caption{The surface $(X_{5,5}, \omega_{5,5})$ decomposes into the polygons $Q(0)$, $Q(1)$, \ldots, $Q(4)$ ordered from left to right. Portions of the horizontal cylinders, $\alpha_\ast$, and the vertical cylinders, $\beta_\ast$ are labeled.} 
\label{fig:stairs}
\end{center}
\end{figure}

We have the following description of the affine homeomorphism $\mu:(X_{m,n}, \omega_{m,n}) \to (Y_{m,n}, \eta_{m,n})$. This implies half of theorem
\ref{thm:semiregular}.

\begin{lemma}
\label{lem:mu}
There is an affine homeomorphism $\mu:(X_{m,n}, \omega_{m,n}) \to (Y_{m,n}, \eta_{m,n})$ such that $\mu\big(Q(k)\big)=P(k)$ for $k=0, \ldots, m-1$.
Moreover, $D(\mu)=\left[\begin{array}{rr}
\csc \frac{\pi}{n} & -\cot \frac{\pi}{n} \\
0 & 1 
\end{array}\right]$.
\end{lemma}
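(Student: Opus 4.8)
The plan is to build $\mu$ one polygon at a time: I will develop each $Q(k)$ into the plane, verify that it is an embedded $2n$-gon, apply the linear map $D(\mu)$, and check that the image is a translate of $P(k)$. Since $\mu$ is required to be affine with the prescribed derivative, its restriction to each $Q(k)$ is pinned down once I know the image of a single point; the real content is therefore geometric --- that each $D(\mu)\big(Q(k)\big)$ is congruent by translation to $P(k)$ --- together with a consistency check that the edge identifications of $(X_{m,n},\omega_{m,n})$ restricted to the $Q(k)$ agree, under $D(\mu)$, with those defining $(Y_{m,n},\eta_{m,n})$, so that the piecewise definition glues into one homeomorphism.

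First I would record the local data. By (\ref{eq:eigenfunction}) the rectangle $R(e)$ on an edge with endpoints $v_{p,q}\in\Alpha$ and $v_{p',q'}\in\Beta$ has horizontal side $\sin(\tfrac{p'\pi}{m})\sin(\tfrac{q'\pi}{n})$ and vertical side $\sin(\tfrac{p\pi}{m})\sin(\tfrac{q\pi}{n})$. Developing $Q(k)$ from its column of rectangles $\{R(e):e\in H_k\}$ together with the triangular pieces $L(\n'(e)),\,L(\e'^{-1}(e)),\,U(\n'^{-1}(e)),\,U(\e'(e))$, one finds that $\partial Q(k)$ consists of axis-parallel sides of rectangles together with diagonals of the rectangles carried by the \emph{vertical} edges lying in columns $k$ and $k+1$. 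A vertical edge $\overline{v_{p,i}\,v_{p,i+1}}$ carries a rectangle whose sides are $\sin(\tfrac{p\pi}{m})\sin(\tfrac{(i+1)\pi}{n})$ and $\sin(\tfrac{p\pi}{m})\sin(\tfrac{i\pi}{n})$, so the relevant diagonal has the form $\sin(\tfrac{p\pi}{m})\big(\sin(\tfrac{(i+1)\pi}{n}),\,-\sin(\tfrac{i\pi}{n})\big)$.

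The heart of the matter is then the single trigonometric identity
\[\sin(\tfrac{\pi}{n})\cos(\tfrac{i\pi}{n})+\cos(\tfrac{\pi}{n})\sin(\tfrac{i\pi}{n})=\sin\big(\tfrac{(i+1)\pi}{n}\big).\]
Applying $D(\mu)$ to the diagonal above, the two sheared coordinates collapse by this identity to $\sin(\tfrac{p\pi}{m})\big(\cos(\tfrac{i\pi}{n}),\sin(\tfrac{i\pi}{n})\big)={\bf v}_i$, with amplitude exactly $\sin(\tfrac{p\pi}{m})$; the horizontal and vertical rectangle sides map to the corresponding edges of $P(k)$ in the same way. As $p$ runs through the two values $k$ and $k+1$ the amplitudes are $\sin(\tfrac{k\pi}{m})$ and $\sin(\tfrac{(k+1)\pi}{m})$, which are precisely the parameters $a,b$ of $P(k)$; matching the even and odd edges of $P_n(a,b)$ to the diagonals coming from columns $k+1$ and $k$, and tracking how this even/odd bookkeeping shifts with the parity of $n$, reproduces the three cases in the definition of $P(k)$. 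This shows that $D(\mu)\big(Q(k)\big)$ has the edge vectors of $P(k)$ in their correct cyclic order, hence is a translate of $P(k)$.

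It remains to control the combinatorics, and this is where I expect the difficulty to lie. Using Convention \ref{permutation_convention}, I would list the rectangles and triangles contributing to $Q(k)$ in cyclic order around $\partial Q(k)$ and check that this reading gives exactly the sequence ${\bf v}_0,\dots,{\bf v}_{2n-1}$, so that $Q(k)$ is an embedded, non-self-overlapping $2n$-gon; I would also verify that the assignment $e\mapsto\{\,L(\n'(e)),L(\e'^{-1}(e)),U(\n'^{-1}(e)),U(\e'(e))\,\}$, as $e$ ranges over $\E'$ and $k$ over $0,\dots,m-1$, uses each triangle $L(\cdot),U(\cdot)$ exactly once, so that the $Q(k)$ tile $(X_{m,n},\omega_{m,n})$. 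The degenerate edges and nodes in the boundary columns $k=0$ and $k=m-1$ must be handled carefully here, since they collapse to the degenerate (regular-polygon) sides of $P(0)$ and $P(m-1)$. Finally, I would match the gluings: a pair of sides of adjacent $Q(k)$ identified across a diagonal of $(X_{m,n},\omega_{m,n})$ must correspond, under $D(\mu)$, to the rule pairing the even sides of $P(k)$ with the opposite sides of $P(k\pm1)$. Verifying this bookkeeping --- and hence that the piecewise map descends to a genuine affine homeomorphism --- is the main obstacle; the trigonometry is routine once the cyclic order of the boundary has been established.
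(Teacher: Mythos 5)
Your plan follows the same route as the paper's proof: define $\mu$ piecewise by sending $Q(k)$ to $P(k)$, verify that the linear part carries the boundary edge vectors of $Q(k)$ to those of $P(k)$, and then check that the edge identifications correspond. But the one computation you actually carry out rests on a false geometric claim. By the paper's definitions, $Q(k)$ is cut out of the surface along \emph{positive} diagonals, so every non-degenerate boundary edge of $Q(k)$ is, up to sign, a vector with both components of the same sign: for a vertical edge $\overline{v_{p,i}v_{p,i+1}}$ it is $\pm\sin(\tfrac{p\pi}{m})\bigl(\sin\tfrac{(i+1)\pi}{n},\,\sin\tfrac{i\pi}{n}\bigr)$, possibly with the two sines interchanged according to which endpoint lies in $\Alpha$. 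The negative-slope vector $\sin(\tfrac{p\pi}{m})\bigl(\sin\tfrac{(i+1)\pi}{n},\,-\sin\tfrac{i\pi}{n}\bigr)$ that you use is the anti-diagonal and never occurs as a boundary edge. Your trigonometric collapse is correct arithmetic for your vector, but applied to the true diagonal the stated matrix gives $\bigl(\cos\tfrac{i\pi}{n}+2\cot\tfrac{\pi}{n}\sin\tfrac{i\pi}{n},\,-\sin\tfrac{i\pi}{n}\bigr)$, which is not an edge vector of $P(k)$; the matrix that does carry the true diagonals to the edge vectors of $P(k)$ is $\left[\begin{array}{rr}\csc\frac{\pi}{n} & -\cot\frac{\pi}{n}\\ 0 & 1\end{array}\right]$, i.e.\ the stated $D(\mu)$ composed with $\mathrm{diag}(1,-1)$ on the right. (You are in good company: the paper's own proof computes the vectors ${\bf u}_i$ with same-sign components and then asserts the value of $M{\bf u}_i$ that only your signs would produce, an inconsistency that is harmless projectively; but against the paper's actual definition of $Q(k)$, your verification as written would not close up.)

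The more substantive issue is that what you call ``the main obstacle'' is precisely the content of the paper's proof, and you defer it rather than do it. The paper fixes the parity of $k$, uses the relations $\n'\circ\e'(e_i)=\e'\circ\n'(e_i)=e_{i+1}$ for $k$ odd (and the inverse relations for $k$ even) to exhibit $Q(k)$ as a northeast (resp.\ southwest) chain of rectangles with triangles attached, writes down all $2n$ boundary vectors explicitly as signed diagonals of named vertical edges (equations \ref{eq:wi_odd} and \ref{eq:wi_even}), and then verifies the edge identifications case by case. Establishing that cyclic order, handling the degenerate edges at the two ends of the chain, and matching the gluings with those of $(Y_{m,n},\eta_{m,n})$ is where essentially all the work lies; a proposal that leaves this as a to-do, and whose executed portion has the sign problem above, is an outline of the paper's argument rather than a proof of the lemma.
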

\begin{proof}
Let $M$ denote the matrix identified as $D(\mu)$ in the lemma. 
The proof consists of two parts.  First we show that
$M\big(Q(k)\big)$ is the same as $P(k)$ up to translation. Second, we show that the boundary edges of the polygons $Q(k)$ considered
as subsets of $(X_{m,n}, \omega_{m,n})$ are identified in the same combinatorial way as the polygons $P(k)$ which make up $(Y_{m,n}, \eta_{m,n})$. 
Concretely, we are defining the affine homeomorphism on pieces, and checking that the homeomorphisms agree on the boundaries. 
If this is true, then the homeomorphisms extend to the whole surface.

We will show that these subsets $Q(k)$ are in fact polygons, and $M\big(Q(k)\big)=P(k)$. We break into two cases depending on the parity of $k$. 

Fix an odd integer $k$ satisfying $0<k \leq m-1$. 
Define the edge $e_i=\overline{v_{k,i} v_{k+1,i}}$ for $i=0, \ldots, n$. We have $e_i \in H_k$ when $i=1,\ldots, n-1$. 
We have that $$\n' \circ \e'(e_i)=\e' \circ \n' (e_i)=e_{i+1}.$$ Therefore many of the triangles are mentioned twice in 
equation \ref{eq:qpolygon}. (For instance, $L\big(\n' (e_1)\big)=L\big(\e'^{-1} (e_2)\big)$.)
Moreover, the top right coordinate vertex $R(e_i)$ is the same as the bottom left vertex of $R(e_{i+1})$ and this point is non-singular
provided neither rectangle is degenerate. Thus this point is non-singular for $i=1, \ldots, n-2$. 
With this in mind, we see that $Q(k)$ is formed by a chain of rectangles $R(e_i)$ moving to the northeast with some triangles added on.
In particular, for $k=1, \ldots, m-2$, $Q(k)$ has $2n$ sides. When $k=m-1$, half of these sides will degenerate to points.
We treat these cases as $2n$-gons as well, with half of their edges of length $0$. 
Let ${\bf u}_i$ for $i=0, \ldots, 2n-1$ be the edge vectors of $Q(k)$ oriented counterclockwise around $Q(k)$. We assume the first edge
vector ${\bf u}_0$ is the lower horizontal edge of the rectangle $R(e_1)$. 
(We have  ${\bf u}_0=\d(\n'^{-1}(e_1))=\d(\overline{\alpha_{k+1,0} \beta_{k+1,1}})$.)
We find these edge vectors of $Q(k)$ to be 
\begin{equation}
\label{eq:wi_odd}
{\bf u}_i=\begin{cases}
\d(\overline{\alpha_{k+1,i} \beta_{k+1,i+1}}) & \textrm{if $i<n$ and $i$ even} \\
\d(\overline{\alpha_{k,i} \beta_{k,i+1}}) & \textrm{if $i<n$ and $i$ odd} \\
-\d(\overline{\beta_{k+1,2n-1-i} \alpha_{k+1,2n-i}}) & \textrm{if $i \geq n$ and $i$ even} \\
-\d(\overline{\beta_{k,2n-1-i} \alpha_{k,2n-i}}) & \textrm{if $i \geq n$ and $i$ odd.}
\end{cases}
\end{equation}
Therefore, we have
$${\bf u}_i=\begin{cases}
\sin \frac{(k+1)\pi}{m}(\sin\frac{(i+1) \pi}{n},\sin \frac{i \pi}{n}) & \textrm{if $i$ is even} \\
\sin \frac{k\pi}{m}(\sin\frac{(i+1) \pi}{n},\sin \frac{i \pi}{n}) & \textrm{if $i$ is odd.}
\end{cases}$$
By a simple trigonometric calculation, 
$$M {\bf u}_i=\begin{cases}
\sin\frac{(k+1) \pi}{m} ( \cos \frac{i \pi}{n}, \sin \frac{i\pi}{n}) & \textrm{if $i$ is even} \\
\sin\frac{k \pi}{m} ( \cos \frac{i \pi}{n}, \sin \frac{i\pi}{n}) & \textrm{if $i$ is odd}.
\end{cases}$$
Thus, $M(Q(k))=P_n(\sin \frac{(k+1) \pi}{m}, \sin\frac{k \pi}{m})$, the same polygon as $P(k)$.

The case of $k$ even with $0 \leq k \leq m-1$ is similar. Let $e_i=\overline{v_{k,i} v_{k+1,i}}$ for $i=0, \ldots, n$. We have 
$\n'^{-1} \circ \e'^{-1}(e_i)=\e'^{-1} \circ \n'^{-1} (e_i)=e_{i+1}$.
So, again the lower left and top right vertices are non-singular. But, the chain of rectangles $R(e_i)$ moves toward the southwest.
Again, it can be observed that $Q(k)$ is a $2n$-gon, which is degenerate if $k=0$ or $k=m-1$. 
We would like to compute the edge vectors ${\bf w}_i$ for $i=0, \ldots, 2n-1$. 
We set ${\bf w}_0$ to be the lower horizontal edge of $Q(k)$. We see
${\bf w}_0=\d\big(\e(e_n)\big)$. Thus, we introduce the variable $j$ defined by $i=n+j$. 
The edge vectors are defined as follows.
\begin{equation}
\label{eq:wi_even}
{\bf w}_{i}={\bf w}_{n+j}=\begin{cases}
-\d(\overline{\alpha_{k,j} \beta_{k,j+1}}) & \textrm{if $j \geq 0$ and $j$ even} \\
-\d(\overline{\alpha_{k+1,j} \beta_{k+1,j+1}}) & \textrm{if $j \geq 0$ and $j$ odd} \\
\d(\overline{\beta_{k,-j-1} \alpha_{k,-j}}) & \textrm{if $j < 0$ and $j$ even} \\
\d(\overline{\beta_{k+1,-j-1} \alpha_{k+1,-j}}) & \textrm{if $j < 0$ and $j$ odd.}
\end{cases}
\end{equation}
We see 
$${\bf w}_{i}={\bf w}_{n+j}=\begin{cases}
-\sin \frac{k\pi}{m}(\sin\frac{(j+1) \pi}{n},\sin \frac{j \pi}{n}) & \textrm{if $j$ is even} \\
-\sin \frac{(k+1)\pi}{m}(\sin\frac{(j+1) \pi}{n},\sin \frac{j \pi}{n}) & \textrm{if $j$ is odd.}
\end{cases}$$
We have
$$M{\bf w}_{i}=M{\bf w}_{n+j}=\begin{cases}
\sin \frac{k\pi}{m}(\cos \frac{i \pi}{n}, \sin \frac{i\pi}{n}) & \textrm{if $i-n=j$ is even} \\
\sin \frac{(k+1)\pi}{m}(\cos \frac{i \pi}{n}, \sin \frac{i\pi}{n}) & \textrm{if $i-n=j$ is odd}.
\end{cases}$$
Thus, $M(Q(k))=P_n(\sin \frac{k\pi}{m},\sin \frac{(k+1)\pi}{m})$ when $n$ is even and $M(Q(k))=P_n(\sin \frac{(k+1)\pi}{m},\sin \frac{k\pi}{m})$ when $n$ is odd.
In either case, we have $M(Q(k))=P(k)$. 

Finally, we note that the identification of edges of these polygons agrees with the gluing definition given in section \ref{sect:semiregular_background}.
Fix $k$ odd. We will see that the even sides of $Q(k)$ are identified with the opposite sides of $Q(k+1)$, and the odd sides of $Q(k)$ are identified with the opposite
sides of $Q(k-1)$. This needs to be done in four cases, of which we will only do one.

Fix an even integer $i<n$, 
Since $k$ is odd, up to sign, the edge of $Q(k)$ associated to vector ${\bf u}_i$ is the positive diagonal of the rectangle $R(\overline{\alpha_{k+1,i} \beta_{k+1,i+1}})$.
This edge of $Q(k)$ is also the $n+i$-th edge of $Q(k+1)$, whose edge vector is $w_{n+i}=-u_i$. (Note that $k$ must be replaced with $k+1$ in the formula for $w_{n+i}$ to see this because we are looking at $Q(k+1)$.) This vector $w_{n+i}$ also comes from the positive diagonal
of $R(\overline{\alpha_{k+1,i} \beta_{k+1,i+1}})$. Thus,
these two edges are identified in $(X_{m,n}, \omega_{m,n})$. 

The remaining three cases to cover are when $i$ is even and $i \geq n$, $i$ is odd and $i<n$, and when $i$ is odd and $i \geq n$. These
cases are left to the motivated reader.
\end{proof}

We have the following corollary about the action of the automorphism $\iota_\ast:(X_{m,n}, \omega_{m,n}) \to (X_{m,n}, \omega_{m,n})$, which exists when $m$ and $n$ are even.
Recall, $\iota_\ast$ was induced by a graph automorphism $\iota: \G_{m,n} \to  \G_{m,n}$. See equation \ref{eq:iota}
of \S \ref{sect:bm}.

\begin{proposition}
\label{prop:iota_ast}
Suppose $m$ and $n$ are even. Then $\iota_\ast\big(Q(k)\big)=Q(m-1-k)$ for $k=0, \ldots m-1$. It follows that 
$\iota'_\ast=\mu \circ \iota_\ast \circ \mu^{-1}:(Y_{m,n}, \eta_{m,n}) \to (Y_{m,n}, \eta_{m,n})$ is an affine automorphism
of $(Y_{m,n}, \eta_{m,n})$ with $D(\iota'_\ast)=I$ and $\iota'_\ast\big(P(k)\big)=P(m-1-k)$ for $k=0, \ldots m-1$.
\end{proposition}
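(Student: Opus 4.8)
The plan is to reduce the whole statement to the combinatorial action of $\iota$ on the augmented graph $\G'_{m,n}$, exploiting the fact that $\iota_\ast$ has trivial derivative. First I would record that $\iota_\ast$ permutes the rectangles by $\iota_\ast\big(R(e)\big)=R(\iota(e))$, as in \S\ref{sect:bm}. Because $\iota(\Alpha)=\Alpha$ and $\iota(\Beta)=\Beta$ we have $\alpha\circ\iota=\iota\circ\alpha$ and $\beta\circ\iota=\iota\circ\beta$, and since $w\circ\iota=w$ the rectangle $R(\iota(e))$ has exactly the same width $w\circ\beta(e)$ and height $w\circ\alpha(e)$ as $R(e)$. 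Hence $\iota_\ast$ restricts to a translation on each rectangle, so $D(\iota_\ast)=I$. Having identity derivative, $\iota_\ast$ preserves horizontal, vertical, and positive-diagonal directions; in particular it carries the positive diagonal of $R(e)$ to that of $R(\iota(e))$, and therefore $\iota_\ast\big(L(e)\big)=L(\iota(e))$ and $\iota_\ast\big(U(e)\big)=U(\iota(e))$.

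Next I would nail down the two combinatorial facts that drive the identity. The automorphism $\iota(v_{i,j})=v_{m-i,n-j}$ of equation \ref{eq:iota} extends to all of $\G'_{m,n}$, degenerate nodes included. Since $m$ is even, the column parity $i \bmod 2$ is preserved (as $m-i\equiv i\!\pmod 2$), so the orientation choice of convention \ref{permutation_convention} is respected and the relations $\iota\circ\e=\e\circ\iota$, $\iota\circ\n=\n\circ\iota$ of \S\ref{sect:bm} extend to $\iota\circ\e'=\e'\circ\iota$ and $\iota\circ\n'=\n'\circ\iota$. I would then check that $\iota$ maps the column $H_k$ bijectively onto $H_{m-1-k}$: the edge $\overline{v_{k,i}v_{k+1,i}}$ with $0<i<n$ is sent to $\overline{v_{m-1-k,\,n-i}\,v_{m-k,\,n-i}}\in H_{m-1-k}$, and $i\mapsto n-i$ permutes $\{1,\dots,n-1\}$.

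With these in hand the first assertion is a substitution. Applying $\iota_\ast$ to the defining union (\ref{eq:qpolygon}) of $Q(k)$ and using $\iota_\ast\big(R(e)\big)=R(\iota e)$, $\iota_\ast\big(L(e)\big)=L(\iota e)$, $\iota_\ast\big(U(e)\big)=U(\iota e)$ together with the commutation of $\iota$ with $\e'^{\pm1}$ and $\n'^{\pm1}$, reindexing by $e'=\iota(e)\in H_{m-1-k}$ turns the union into exactly the defining union of $Q(m-1-k)$, giving $\iota_\ast\big(Q(k)\big)=Q(m-1-k)$. The ``it follows'' clause is then formal: since $\mu$ is affine with $\mu\big(Q(k)\big)=P(k)$ by lemma \ref{lem:mu}, the map $\iota'_\ast=\mu\circ\iota_\ast\circ\mu^{-1}$ is an affine automorphism of $(Y_{m,n},\eta_{m,n})$ with $D(\iota'_\ast)=D(\mu)\,D(\iota_\ast)\,D(\mu)^{-1}=M\,I\,M^{-1}=I$, and $\iota'_\ast\big(P(k)\big)=\mu\big(\iota_\ast(Q(k))\big)=\mu\big(Q(m-1-k)\big)=P(m-1-k)$.

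I expect the main obstacle to be the bookkeeping in the first two steps rather than the final substitution: establishing $D(\iota_\ast)=I$ cleanly (which is what legitimizes passing $\iota_\ast$ through the upper/lower triangle halves) and verifying that $\iota$ genuinely commutes with $\e'$ and $\n'$ on the augmented graph, where the hypothesis that $m$ is even is used precisely to preserve column parities so convention \ref{permutation_convention} survives. Once identity derivative is secured, the triangle-preservation and the reindexing are routine.
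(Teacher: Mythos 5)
Your proof is correct and takes essentially the same route as the paper's: the paper's own argument likewise reduces everything to the combinatorial facts that $\iota$ extends naturally to $\G'_{m,n}$ and satisfies $\iota(H_k)=H_{m-1-k}$, then substitutes into the definition of $Q(k)$ in equation \ref{eq:qpolygon}. The additional details you supply (that $D(\iota_\ast)=I$, the commutation of $\iota$ with $\e'$ and $\n'$ on the augmented graph, the preservation of upper and lower triangles, and the formal deduction of the claims about $\iota'_\ast$) are exactly the points the paper leaves implicit.
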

\begin{proof}
Recall the definition of $H_k \subset \E'$ given in equation \ref{eq:Hk}.
Note that the graph automorphism $\iota$ extends naturally to $\G_{m,n}'$ and 
satisfies $\iota(H_k)=H_{m-1-k}$. By the definition of $Q(k)$ in equation \ref{eq:qpolygon},
$\iota_\ast\big(Q(k)\big)=Q(m-1-k)$.
\end{proof}

\subsection{The dihedral group}
\label{sect:dihedral}

The dihedral group of order 8, $D_8$, acts on the plane in a way that preserves the set of directions $\{\textrm{horizontal}, \textrm{vertical}\}$. In particular,
if $S$ is a translation surface with horizontal and vertical cylinder decompositions and if $M \in D_8$, then the natural affine homeomorphism $S \to M(S)$ preserves the collection of
all horizontal and vertical cylinders. Thus, there is an action of $D_8$ on the data associated to the cylinder intersection graph. 
Note that the matrices $C$ and $E$ given in equation \ref{eq:matrices} generate $D_8$. We will record their actions on this data.

\begin{proposition}[Action of $D_8$]
\label{prop:dihedral}
Let $S[\G,(\Alpha,\Beta),(\e,\n),w]$ denote the translation surface constructed from the bipartite ribbon graph $\G$ with vertex
set $\V=\Alpha \cup \Beta$, width function $w:\V \to \R_{>0}$, edge set $\E$, and edge permutations $\e, \n:\E \to \E$, as in \S \ref{sect:thurston_veech}.
Then
\begin{itemize}
\item $C\big(S[\G,(\Alpha,\Beta),(\e,\n),w]\big)=S[\G,(\Beta,\Alpha),(\n^{-1},\e^{-1}),w]$, and
\item $E\big(S[\G,(\Alpha,\Beta),(\e,\n),w]\big)=S[\G,(\Alpha,\Beta),(\e^{-1},\n),w]$.
\end{itemize}
\end{proposition}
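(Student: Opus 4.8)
The plan is to compute directly how the linear maps $C$ and $E$ transform the rectangle-and-gluing data defining $S[\G,(\Alpha,\Beta),(\e,\n),w]$. Recall from \S\ref{sect:thurston_veech} that this surface is the union of the rectangles $R_e=[0,w\circ\beta(e)]\times[0,w\circ\alpha(e)]$, glued so that the right edge of $R_e$ meets the left edge of $R_{\e(e)}$ and the top edge of $R_e$ meets the bottom edge of $R_{\n(e)}$, every identification being a translation. Since conjugating a translation by a linear map $M$ is again a translation, $M(S)$ is again built from the rectangles $M(R_e)$ with correspondingly conjugated gluings; so it suffices to recognize the shapes $M(R_e)$, up to translation, as the rectangles of the claimed data, and to track which side of $M(R_e)$ is the image of which side of $R_e$. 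Both $C$ and $E$ lie in $D_8$ and hence carry sides of rectangles to sides; $E$ preserves the partition of directions into horizontal and vertical, while $C$ interchanges them, which is already the reason the bipartition $(\Alpha,\Beta)$ is preserved in the second formula but reversed to $(\Beta,\Alpha)$ in the first.

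For $E=\mathrm{diag}(-1,1)$, the image $E(R_e)$ is a translate of $R_e$ itself, so the width function $w$ and the partition $(\Alpha,\Beta)$ are unchanged. The map $E$ fixes top and bottom edges and interchanges left and right edges. The vertical gluing (top of $R_e$ to bottom of $R_{\n(e)}$) is therefore carried to the identical rule for the images, giving new north permutation $\n$. The horizontal gluing (right of $R_e$ to left of $R_{\e(e)}$) becomes the identification of the left edge of $E(R_e)$ with the right edge of $E(R_{\e(e)})$; rewriting this as ``right edge of $E(R_{\e(e)})$ to left edge of $E(R_e)$'' shows that the new east permutation sends $\e(e)\mapsto e$, i.e. it is $\e^{-1}$. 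This is exactly $S[\G,(\Alpha,\Beta),(\e^{-1},\n),w]$.

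For $C$, which acts by $(x,y)\mapsto(-y,-x)$, the image $C(R_e)$ is, up to translation, the rectangle $[0,w\circ\alpha(e)]\times[0,w\circ\beta(e)]$; after swapping the two vertex classes so that the new $\alpha$-function is the old $\beta$ and the new $\beta$-function is the old $\alpha$, this is precisely the rectangle prescribed by the data $(\Beta,\Alpha)$. A coordinate check shows that $C$ sends the right edge of $R_e$ to the bottom edge of $C(R_e)$, the left edge to the top, the top edge to the left, and the bottom edge to the right. Pushing the horizontal gluing (right of $R_e$ to left of $R_{\e(e)}$) through $C$ gives ``bottom of $C(R_e)$ to top of $C(R_{\e(e)})$,'' which, read as a top-to-bottom rule, forces the new north permutation to satisfy $\e(e)\mapsto e$, hence to equal $\e^{-1}$. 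Pushing the vertical gluing (top of $R_e$ to bottom of $R_{\n(e)}$) through $C$ gives ``left of $C(R_e)$ to right of $C(R_{\n(e)})$,'' which, read as a right-to-left rule, forces the new east permutation to satisfy $\n(e)\mapsto e$, hence to equal $\n^{-1}$. This yields $S[\G,(\Beta,\Alpha),(\n^{-1},\e^{-1}),w]$, as claimed.

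The only genuine subtlety is the orientation bookkeeping in the last two paragraphs: one must consistently rewrite each transformed gluing as a canonical ``right-to-left'' or ``top-to-bottom'' identification before reading off its permutation, and it is precisely this rewriting that produces the inverses. Everything else reduces to the two elementary observations that a linear map sends rectangles to rectangles and translation identifications to translation identifications, and that since $C$ and $E$ generate $D_8$, these two computations determine the action of the whole group on the combinatorial data.
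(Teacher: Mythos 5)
Your proposal is correct, and the coordinate bookkeeping (right $\to$ bottom, left $\to$ top, top $\to$ left, bottom $\to$ right under $C$; left/right swapped under $E$) checks out exactly, yielding the claimed inverses and the swap of $(\Alpha,\Beta)$. The paper itself states this proposition without proof, merely ``recording'' the action of the generators $C$ and $E$ on the rectangle-and-gluing data, so your direct verification is precisely the routine computation the paper leaves to the reader.
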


In our setting, this gives us the following. 

\begin{corollary}
\label{cor:e}
There is an affine homeomorphism $\rho:(X_{m,n}, \omega_{m,n}) \to (X_{n,m}, \omega_{n,m})$ with derivative $E$. 
In particular, $E \in \GL(X_{m,m}, \omega_{m,m})$ for all $m$.
\end{corollary}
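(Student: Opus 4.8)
The plan is to apply the $E$-part of Proposition \ref{prop:dihedral} and then recognize the resulting combinatorial surface as $(X_{n,m},\omega_{n,m})$ via the coordinate-swap of the grid graph. Writing $(X_{m,n},\omega_{m,n})=S[\G_{m,n},(\Alpha,\Beta),(\e,\n),w]$, Proposition \ref{prop:dihedral} gives
$$E\big(X_{m,n},\omega_{m,n}\big)=S[\G_{m,n},(\Alpha,\Beta),(\e^{-1},\n),w].$$
So it suffices to produce an isomorphism between this weighted bipartite ribbon graph and the one defining $(X_{n,m},\omega_{n,m})$; composing the tautological map $(X_{m,n},\omega_{m,n})\to E(X_{m,n},\omega_{m,n})$ (which has derivative $E$) with the resulting translation equivalence then yields $\rho$, and setting $m=n$ gives $E\in\GL(X_{m,m},\omega_{m,m})$.

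First I would introduce the transpose reindexing $\phi:v_{i,j}\mapsto v_{j,i}$, which is a graph isomorphism $\G_{m,n}\to\G_{n,m}$ because the adjacency condition $(i-k)^2+(j-l)^2=1$ is symmetric under interchanging the two coordinates. Since the bipartite labelling in equation \ref{eq:node_labels} depends only on the parity of $i+j$, and the eigenfunction $w(v_{i,j})=\sin(\tfrac{i\pi}{m})\sin(\tfrac{j\pi}{n})$ of equation \ref{eq:eigenfunction} is symmetric under the swap $(i,j)\leftrightarrow(j,i)$ together with $m\leftrightarrow n$, the map $\phi$ carries $\Alpha$ to $\Alpha$, $\Beta$ to $\Beta$, and intertwines the two width functions. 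It then remains only to compare the ribbon structures.

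The heart of the argument is the interaction of $\phi$ with Convention \ref{permutation_convention}. The coordinate swap $T(x,y)=(y,x)$ is orientation-reversing, so it interchanges clockwise and counterclockwise cyclic orderings, and reversing the cyclic order at a vertex inverts the associated ``next edge'' permutation. Because the convention selects clockwise or counterclockwise according to the parity of the \emph{first} coordinate, while $\phi$ moves the first coordinate into the second slot, I would compare the order transported by $\phi$ against the native $\G_{n,m}$-order vertex by vertex. At an $\Alpha$-vertex $v_{i,j}$ the indices $i$ and $j$ share a parity, and a short parity check shows the transported order is the reverse of the native order; at a $\Beta$-vertex $i$ and $j$ have opposite parities and the two orders agree. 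Hence $\phi$ conjugates $\e$ to $\e_{n,m}^{-1}$ (equivalently $\e^{-1}$ to $\e_{n,m}$) and $\n$ to $\n_{n,m}$, which is exactly the data appearing after applying $E$ above.

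This produces the translation equivalence $E(X_{m,n},\omega_{m,n})\cong(X_{n,m},\omega_{n,m})$, and therefore the affine homeomorphism $\rho$ with $D(\rho)=E$, completing the corollary. I expect the only delicate point to be the parity bookkeeping of the previous paragraph: one must correctly pair the orientation reversal of the coordinate swap with the first-coordinate parity dependence of Convention \ref{permutation_convention}. A sign slip there would conjugate $\e$ to $\e_{n,m}$ rather than to $\e_{n,m}^{-1}$, which would instead match the $C$-action and break the agreement with the $E$-statement of Proposition \ref{prop:dihedral}.
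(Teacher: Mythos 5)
Your proposal is correct and follows essentially the same route as the paper: the paper's proof consists precisely of the coordinate-swap graph isomorphism $v_{i,j}\mapsto v_{j,i}$ from $\G_{m,n}$ to $\G_{n,m}$, verified to preserve the bipartition and width function and to conjugate $\e_{m,n}$ to $\e_{n,m}^{-1}$ and $\n_{m,n}$ to $\n_{n,m}$, combined with the $E$-statement of Proposition \ref{prop:dihedral}. Your parity bookkeeping at $\Alpha$- and $\Beta$-vertices is exactly right and simply makes explicit what the paper leaves to the reader.
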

\begin{proof}
The graph homomorphism $\eta :\G_{m,n} \to \G_{n,m}: v_{i,j} \mapsto v_{j,i}$ satisfies 
$\eta(\Alpha_{m,n})=\Alpha_{n,m}$, $\eta(\Beta_{m,n})=\Beta_{n,m}$, $\eta \circ \e_{m,n} \circ \eta^{-1}=\e_{n,m}^{-1}$, $\eta \circ \n_{m,n} \circ \eta^{-1}=\n_{n,m}$,
and $w_{m,n} \circ \eta=w_{n,m}$.
\end{proof}

This corollary was the last piece we needed to prove the semiregular decomposition, theorem \ref{thm:semiregular}.

\begin{proof}[Proof of theorem \ref{thm:semiregular}.]
Lemma \ref{lem:mu} handles the existence of $\mu$, so we will concentrate on the existence of $\nu$. 
Let $\mu'$ denote the affine homeomorphism $(X_{n,m}, \omega_{n,m}) \to (Y_{n,m}, \eta_{n,m})$ guaranteed by lemma \ref{lem:mu}.
We define the affine homeomorphism $\nu:(X_{m,n}, \omega_{m,n}) \to (Y_{n,m}, \eta_{n,m})$ to be $\nu=\mu' \circ \rho$, with $\rho:(X_{m,n}, \omega_{m,n}) \to (X_{n,m}, \omega_{n,m})$ as in corollary \ref{cor:e}. 
We have that 
$$D(\nu)=D(\mu') \cdot D(\rho)=\left[\begin{array}{rr}
-\csc \frac{\pi}{m} & -\cot \frac{\pi}{m} \\
0 & 1 
\end{array}\right].$$
\end{proof}

\section{Topological type}
\label{sect:topology}
\def\T{\mathcal T}

In this section, we will compute the topological types of the surfaces $(Y_{m,n}, \eta_{m,n})$ and $(Y_{m,n}^e, \eta_{m,n}^e)$. 
Recall $(Y_{m,n}, \eta_{m,n})$ decomposes into semiregular polygons $P(0), \ldots, P(m-1)$.

\begin{proposition}[Singularities of $(Y_{m,n}, \eta_{m,n})$]
\label{prop:sing1}
Let $\gamma=\gcd(m,n)$. There are $\gamma$ equivalence classes of vertices of the decomposition into polygons. 
Each of these points has cone angle $2 \pi (mn-m-n)/\gamma$. 
\end{proposition}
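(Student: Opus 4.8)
The plan is to get the cone angle for free from a global angle count, and to spend the real effort on counting the vertex equivalence classes as orbits of the permutation of corners induced by the edge identifications.

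First I would record the interior angles. Because the edge vectors ${\bf v}_i$ of a semiregular $2n$-gon point in the fixed directions $i\pi/n$ regardless of the side lengths $a,b$, \emph{every} interior angle of a non-degenerate $P(k)$ equals $\pi-\pi/n$, while the two end polygons $P(0)$ and $P(m-1)$ degenerate to regular $n$-gons with interior angles $\pi-2\pi/n$. Summing $(2n-2)\pi$ over the $m-2$ genuine $2n$-gons and $(n-2)\pi$ over the two end polygons gives total angle $2\pi(mn-m-n)$. Since this total is precisely the sum of the cone angles over all vertex classes, the entire proposition reduces to showing that there are exactly $\gamma$ classes and that they all carry the same cone angle.

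Next I would set up the identification permutation. Label a corner by a pair $(k,a)$, where $k$ is the polygon index and $a\in\Z/2n$ indexes the vertex of $P(k)$ between edges ${\bf v}_{a-1}$ and ${\bf v}_a$. Reading off the gluing rule of \S\ref{sect:semiregular_background} (even sides of $P(k)$ glue to the opposite side of $P(k+1)$, odd sides to the opposite side of $P(k-1)$, the opposite of side $i$ being side $i+n$), a short computation shows that crossing ${\bf v}_a$ carries $(k,a)$ to $(k\pm1,\,a+n+1)$, with the sign governed by the parity of $a$. Traversing a cone point is then the iteration of this map, and the vertex classes are exactly its orbits.

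The heart of the matter, and the step I expect to be the main obstacle, is counting these orbits and showing $\gamma=\gcd(m,n)$ of them. In the idealized situation where the polygon index ran cyclically over $\Z/m$ with no degeneration, one can read off the orbit lengths directly from the shift $a\mapsto a+n+1$ together with the sign rule; but the true surface has $k$ ranging over $\{0,\dots,m-1\}$, and at the two degenerate end polygons the length-zero edges merge consecutive vertices, so an orbit arriving at an end is \emph{reflected} rather than passing through. Carrying out this reflection bookkeeping carefully is exactly what collapses the naive count down to $\gamma$; I also expect the same tracking to show that every orbit meets the two ends and the generic corners in identical proportions, which forces the cone angles of the $\gamma$ classes to coincide (and in particular shows no class is smooth). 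Combined with the total $2\pi(mn-m-n)$ from the first step, each class then has cone angle $2\pi(mn-m-n)/\gamma$.

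As a cross-check and an alternative route for the equal-angle conclusion, I would exploit the affine homeomorphism $\mu$ of Lemma \ref{lem:mu} to $(X_{m,n},\omega_{m,n})$, whose decomposition is into rectangles with every corner a right angle. Cone points and their cone angles are preserved by affine homeomorphisms, so there the cone angle of a class is simply $\tfrac{\pi}{2}$ times the number of surrounding rectangle corners; verifying that each of the $\gamma$ classes is surrounded by the same number of corners re-establishes the common value without angle degeneracies, and the order-$n$ rotational symmetry of each $P_n(a,b)$ (assembling into an automorphism of $(Y_{m,n},\eta_{m,n})$) provides additional leverage for the transitivity one needs.
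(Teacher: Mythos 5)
Your opening reduction is sound: the interior angles and the total angle $2\pi(mn-m-n)$ are computed correctly, your corner permutation $(k,a)\mapsto(k\pm 1,\,a+n+1)$ is a correct model of the gluings, and cone angles are indeed preserved under the affine homeomorphism $\mu$. But the proposal stops exactly where the proposition begins. The two assertions that constitute the statement --- that there are $\gamma$ orbits, and that all orbits carry the same cone angle --- are both left as expectations (``carrying out this reflection bookkeeping carefully is exactly what collapses the naive count down to $\gamma$''; ``I also expect the same tracking to show\dots''), and neither can be recovered from the other via the total-angle identity: if all cone angles are equal, the number of classes could a priori be \emph{any} divisor $N$ of $mn-m-n$ (each class having angle $2\pi(mn-m-n)/N$, a legitimate multiple of $2\pi$), and $\gamma$ is generally far from the only such divisor (e.g.\ $m=4$, $n=6$ gives $mn-m-n=14$, $\gamma=2$). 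So the heart of the proof is missing, not merely compressed.

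For comparison, the paper's proof makes the hard step tractable by tracking only the polygon index, never the corner label. At any corner of $P(k)$ the two adjacent edges have consecutive indices, hence opposite parities, so one is glued to $P(k+1)$ and the other to $P(k-1)$; rotating a vector around any vertex of the surface therefore forces the index sequence to zigzag $0,1,\dots,m-1,m-2,\dots,1,0,1,\dots$, with your ``reflections'' occurring automatically at the regular $n$-gons $P(0)$ and $P(m-1)$. Consequently the rotation around \emph{any} vertex class is a whole number $x$ of identical periods, each sweeping angle $2\bigl(\pi-\tfrac{2\pi}{n}\bigr)+2(m-2)\bigl(\pi-\tfrac{\pi}{n}\bigr)=2\pi(mn-m-n)/n$, and closing up requires the total to be a multiple of $2\pi$; since distinct corners of the regular $n$-gon $P(0)$ have distinct outgoing edge directions, the rotation closes at the first such opportunity, so $x=n/\gcd(mn-m-n,n)=n/\gamma$ and every class has cone angle exactly $2\pi(mn-m-n)/\gamma$. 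Only then is your total-angle identity invoked, dividing to obtain the count $\gamma$. If you wish to complete your orbit-counting route, this zigzag observation is the missing organizing principle: it shows every orbit passes through $P(0)$ and decomposes into identical periods, which simultaneously yields the equal-angle claim and reduces the orbit count to the elementary fact $\gcd(mn-m-n,n)=\gcd(m,n)$.
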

\begin{proof}
Let ${\mathbf v}_0$ be a vector based at a vertex $v$ of $P(0)$, which is pointing along the boundary of $P(0)$ in the counterclockwise 
direction. We will rotate this vector counterclockwise around the point $V$ of $(Y_{m,n}, \eta_{m,n})$ which is the equivalence class of vertices of polygons containing $v$.
$P(0)$ is a regular $n$-gon, so we reach $P(1)$ when we have rotated by 
$\pi-\frac{2\pi}{n}$. Inside $P(1)$ we may rotate by another $\pi-\frac{2 \pi}{2n}$ until we reach $P(2)$, since
$P(1)$ is a semiregular $2n$-gon. For $i=1, \ldots, m-2$, $P(i)$ is a regular $2n$-gon. Thus we repeat this process until we reach
$P(m-1)$. Then $P(m-1)$ is a regular $n$-gon again, so we rotate by $\pi-\frac{2\pi}{n}$. Now the indices decrease. When we rotate
by $(m-2)(\pi-\frac{2 \pi}{2n})$ we reach $P(0)$. We have closed up if the total rotation we have done is a multiple of $2 \pi$. In general, we see that the 
cone angle at $V$ is 
$$x \big(2(\pi-\frac{2\pi}{n})+2(m-2)(\pi-\frac{2 \pi}{2n})\big)=2x\pi\frac{nm-n-m}{n},$$
where $x$ is the smallest positive integer for which this number is a multiple of $2 \pi$. We see $x=n/\gamma$.
So $V$ has cone angle $2 \pi (mn-m-n)/\gamma$. 
Every singularity is of this form, and the sum of all the angles of polygons $P(0), \ldots, P(m-1)$ is $2 \pi(mn-m-n)$.
Hence, there are $\gamma$ total singularities.
\end{proof}

\begin{corollary}
The Euler characteristic of $(Y_{m,n}, \eta_{m,n})$ is $m+n+\gamma-mn$. 
\end{corollary}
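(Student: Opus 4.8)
The plan is to read off the Euler characteristic directly from the polygonal cell structure on $(Y_{m,n}, \eta_{m,n})$ coming from its decomposition into the polygons $P(0), \ldots, P(m-1)$. I would use $\chi = V - E + F$, where $V$, $E$, and $F$ are the numbers of vertices, edges, and faces of this cell structure, and compute each of the three counts separately.

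The face and vertex counts are immediate. The faces are the $m$ polygons, so $F = m$, and proposition \ref{prop:sing1} already tells us that the polygon corners fall into exactly $\gamma$ equivalence classes, so $V = \gamma$. For the edge count I would total the polygon sides and divide by two, since the gluing rule of \S \ref{sect:semiregular_background} matches the sides in pairs. Here one must notice that $P(0)$ and $P(m-1)$ are degenerate: because $\sin 0 = \sin \pi = 0$, one edge-length parameter vanishes for each of these, so each reduces to a regular $n$-gon with $n$ sides rather than $2n$. The middle polygons $P(1), \ldots, P(m-2)$ are genuine semiregular $2n$-gons with $2n$ sides each. Hence the total number of polygon sides is $2n + 2n(m-2) = 2n(m-1)$, giving $E = n(m-1)$.

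Substituting then yields
$$\chi = V - E + F = \gamma - n(m-1) + m = m + n + \gamma - mn,$$
as claimed. Given proposition \ref{prop:sing1}, the computation is routine; the only spot demanding care is the edge count, where one must correctly account for the collapsed halves of the two degenerate end polygons $P(0)$ and $P(m-1)$. This bookkeeping, rather than any deeper difficulty, is the main thing to get right.
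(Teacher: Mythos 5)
Your proposal is correct and follows essentially the same route as the paper: the paper's proof is the one-line observation that the decomposition into $P(0), \ldots, P(m-1)$ has $m$ faces, $(m-1)n$ edges, and $\gamma$ vertices, exactly the counts you derive. Your only addition is to spell out the edge count via the degeneration of $P(0)$ and $P(m-1)$ to regular $n$-gons, which is a correct justification of the figure the paper simply asserts.
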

\begin{proof}
The decomposition $(Y_{m,n}, \eta_{m,n})=\bigcup_{i=0}^{m-1} P(i)$ has
$m$ faces, $(m-1)n$ edges, and $\gamma$ vertices.
\end{proof}

\begin{proposition}[Singularities of $(Y_{m,n}^e, \eta_{m,n}^e)$]
If $m/\gamma$ and $n/\gamma$ are odd, then there are $\gamma$ equivalence classes of vertices in the polygonal decomposition of $(Y_{m,n}^e, \eta_{m,n}^e)$
and each has cone angle $\pi (mn-m-n)/\gamma$.
Otherwise, when one of $m/\gamma$ or $n/\gamma$ is even, there are $\gamma/2$ equivalence classes of vertices and each has cone angle 
$2\pi (mn-m-n)/\gamma$.
\end{proposition}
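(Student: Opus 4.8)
The plan is to build on Proposition~\ref{prop:sing1} by tracking how the automorphism $\iota'_\ast$ acts on the $\gamma$ singular points of $(Y_{m,n}, \eta_{m,n})$. Since $(Y^e_{m,n}, \eta^e_{m,n})=(Y_{m,n}, \eta_{m,n})/\iota'_\ast$ and $D(\iota'_\ast)=I$ (Proposition~\ref{prop:iota_ast}), passing to the quotient either identifies pairs of singularities or fixes them, and halves or preserves cone angles accordingly. So the whole computation reduces to understanding the permutation that $\iota'_\ast$ induces on the singularity set, together with which singularities are fixed points of $\iota'_\ast$. First I would set up explicit coordinates for the vertices: using the rotation argument from Proposition~\ref{prop:sing1}, a single orbit of vertices under the rotation-by-$2\pi$ closing-up procedure sweeps through $P(0), \ldots, P(m-1)$ and back, visiting $n/\gamma$ copies. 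Since $\iota'_\ast$ sends $P(k) \mapsto P(m-1-k)$ by translation with trivial derivative, I would identify precisely where a chosen base vertex of $P(0)$ is carried.

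The key step is a parity count. The rotation trajectory used to compute a cone point alternates between ``even edges'' and ``odd edges'' of the semiregular polygons, and the closing-up index $x=n/\gamma$ determines how many polygons are traversed. I would track a marked vertex through the full cone-angle loop and ask whether $\iota'_\ast$ maps that loop to itself (fixing the singularity, so that no angle is lost and the $\gamma/2$-versus-$\gamma$ dichotomy is governed by whether distinct singularities are paired) or whether $\iota'_\ast$ carries one singularity to a genuinely different one. The clean invariant is the parity of $n/\gamma$ together with $m/\gamma$: when both $m/\gamma$ and $n/\gamma$ are odd, I expect $\iota'_\ast$ to act \emph{freely} on the singularity set with no fixed singular points, so the quotient map is an unramified double cover near each singularity, each of the $\gamma$ upstairs singularities maps injectively to a downstairs singularity, but the cone angle at each downstairs point is halved to $\pi(mn-m-n)/\gamma$ because $\iota'_\ast$ identifies two \emph{sheets} of the angular neighborhood --- equivalently, the single cone of angle $2\pi(mn-m-n)/\gamma$ has $\iota'_\ast$ as an order-two rotational symmetry fixing the cone point, folding it in half. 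In the complementary case, where one of $m/\gamma$, $n/\gamma$ is even, I expect $\iota'_\ast$ to swap the $\gamma$ singularities in pairs without fixing any, yielding $\gamma/2$ downstairs singularities each retaining the full cone angle $2\pi(mn-m-n)/\gamma$.

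I would verify the total angle in both cases as an arithmetic sanity check: the sum of all cone angles of $(Y^e_{m,n}, \eta^e_{m,n})$ must equal half the total for $(Y_{m,n}, \eta_{m,n})$, namely $\pi(mn-m-n)$, since $\iota'_\ast$ is a fixed-point-free involution on the complement of the singular set and halves the area. In the first case $\gamma$ singularities of angle $\pi(mn-m-n)/\gamma$ give total $\pi(mn-m-n)$; in the second case $\gamma/2$ singularities of angle $2\pi(mn-m-n)/\gamma$ give the same total. This consistency pins down the dichotomy and confirms the two stated formulas.

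The main obstacle will be the combinatorial bookkeeping that decides the fixed-point structure: determining exactly when the marked vertex of $P(0)$ and its $\iota'_\ast$-image lie in the same cone-point equivalence class. This requires carefully following the vertex-identification pattern through the edge gluings of Lemma~\ref{lem:mu} (the even/odd side identifications between $P(k)$, $P(k\pm 1)$) and reconciling it with $\iota'_\ast(P(k))=P(m-1-k)$. The parity of $m/\gamma$ is what controls whether the index shift $k \mapsto m-1-k$ lands inside a single rotational orbit or bridges two distinct orbits, so the hard part is making that correspondence rigorous rather than heuristic; once the orbit structure is established, the cone-angle formulas follow immediately from Proposition~\ref{prop:sing1}.
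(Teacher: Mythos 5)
Your reduction --- understand how $\iota'_\ast$ acts on the $\gamma$ singular points of $(Y_{m,n},\eta_{m,n})$ and read off the quotient data from Proposition \ref{prop:sing1} --- is a legitimate strategy, and the dichotomy you predict is the correct one. But the proposal has a genuine gap at exactly the step that carries all the content: you never prove, for given parities of $m/\gamma$ and $n/\gamma$, whether $\iota'_\ast$ fixes each singular point (acting on its cone neighborhood as a rotation by half the cone angle, so the angle halves downstairs) or permutes the singular points freely in pairs (so the angle survives and the count halves). You explicitly defer this to ``combinatorial bookkeeping'' and call it the main obstacle, but it cannot be deferred: it \emph{is} the proposition. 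Nor can your total-angle check substitute for it, because the two scenarios are numerically indistinguishable at that level --- $\gamma$ points of angle $\pi(mn-m-n)/\gamma$ and $\gamma/2$ points of angle $2\pi(mn-m-n)/\gamma$ both sum to $\pi(mn-m-n)$ --- so matching the total decides nothing. One direction can in fact be settled cheaply: writing $k=(mn-m-n)/\gamma$, if one of $m/\gamma$, $n/\gamma$ is even then $k$ is odd, and a Euclidean cone of angle $2\pi k$ with $k$ odd admits no involution with derivative $I$ fixing the cone point (such an involution would be rotation by $2\pi j$ with $k\mid 2j$, forcing $j=k/2$); hence the singularities must be paired. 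But when both $m/\gamma$ and $n/\gamma$ are odd, $k$ is even and \emph{both} local pictures are possible, so the case your argument still owes is precisely the first one. I would also flag that your description of that case is internally inconsistent: you say $\iota'_\ast$ ``acts freely on the singularity set with no fixed singular points'' and, in the same sentence, that it is ``an order-two rotational symmetry fixing the cone point.'' The latter is the correct picture; the former would produce $\gamma/2$ points, not $\gamma$.

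For comparison, the paper avoids the equivariant analysis entirely by rerunning the rotation argument of Proposition \ref{prop:sing1} directly in the quotient surface: viewing $(Y^e_{m,n},\eta^e_{m,n})$ as the polygons with $P(0)$ and $P(m-1)$ identified, a vector based at a vertex returns to $P(0)=P(m-1)$ after rotating by only $\pi(nm-n-m)/n$, i.e.\ half of the upstairs loop. The cone angle is then the least multiple $x$ of this quantity lying in $2\pi\Z$, and $x$ is governed by
$$\gcd(2n,m+n)=\gamma\,\gcd\Big(\frac{2n}{\gamma},\frac{m}{\gamma}+\frac{n}{\gamma}\Big),$$
which equals $2\gamma$ when both $m/\gamma$ and $n/\gamma$ are odd and $\gamma$ otherwise; the parity dichotomy thus falls out of a one-line gcd computation, and the number of singular classes follows by dividing the total angle $\pi(mn-m-n)$ by the cone angle. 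If you want to salvage your route, you must do the analogous tracing upstairs: follow a marked vertex $v$ of $P(0)$ around its cone point, as in Proposition \ref{prop:sing1}, and decide whether $\iota'_\ast(v)$ (a vertex of $P(m-1)$, by Proposition \ref{prop:iota_ast}) lies in the same equivalence class and at what angular offset. That is essentially the same computation the paper performs, made more awkward by working upstairs and then quotienting.
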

\begin{proof}
The proof proceeds in the same manner. Recall that
$(Y_{m,n}^e, \eta_{m,n}^e)$ is the quotient $(Y_{m,n}, \eta_{m,n})/\iota'_\ast$,
where $\iota'_\ast$ is the order two action which switches the polygons
$P(i)$ with $P(m-1-i)$ for all $i$. 
Let ${\mathbf v}_0$ be a vector based at a vertex $v$ of the polygon $P(0)/\iota'_\ast$
in $(Y_{m,n}^e, \eta_{m,n}^e)$ which is pointing along the boundary of $P(0)$ in a counterclockwise 
direction. We rotate ${\mathbf v}_0$ counterclockwise. We continue rotating until we get back to $P(0)/\iota'_\ast=P(m-1)/\iota'_\ast$. At this point, we have rotated by
$\pi\frac{nm-n-m}{n}$. The cone angle at this point will be
$$x\pi\frac{nm-n-m}{n},$$
where $x$ is the smallest positive integer which makes this an integer multiple of $2\pi$. Thus $x=2n/\gcd(2n,m+n)$. 
We have 
$$\gcd(2n,m+n)=\gamma \gcd(\frac{2n}{\gamma},\frac{m}{\gamma}+\frac{n}{\gamma})=
\begin{cases}
2 \gamma & \textrm{if both $\frac{m}{\gamma}$ and $\frac{n}{\gamma}$ are odd} \\
\gamma & \textrm{if one of $\frac{m}{\gamma}$ or $\frac{n}{\gamma}$ is even.} 
\end{cases}
$$
This determines the cone angle. The number of vertices follows by dividing the total angle
by this cone angle.
\end{proof}

We can compute the Euler characteristic as before.

\begin{corollary}
If both $m/\gamma$ and $n/\gamma$ are odd, then $\chi(X_{m,n}^e, \omega_{m,n}^e)=\frac{m+n+2\gamma-mn}{2}$.
Otherwise, $\chi(X_{m,n}^e, \omega_{m,n}^e)=\frac{m+n+\gamma-mn}{2}$.
\end{corollary}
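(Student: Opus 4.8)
The plan is to mirror the preceding corollary that computed $\chi(Y_{m,n},\eta_{m,n})$: I would exhibit a cell decomposition of $(Y^e_{m,n},\eta^e_{m,n})$ coming from its semiregular polygons and apply Euler's formula $\chi=V-E+F$. First note that by corollary \ref{cor:semiregular2} the surfaces $(X^e_{m,n},\omega^e_{m,n})$ and $(Y^e_{m,n},\eta^e_{m,n})$ are affinely homeomorphic, so they have the same Euler characteristic and it suffices to compute $\chi(Y^e_{m,n},\eta^e_{m,n})$. By proposition \ref{prop:iota_ast}, the involution $\iota'_\ast$ permutes the faces of the decomposition of $Y_{m,n}$ by $P(k)\mapsto P(m-1-k)$; since $m$ is even we have $k\neq m-1-k$, so the action on faces is free and $P(0),\ldots,P(m/2-1)$ is a fundamental domain. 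Hence the quotient decomposition has $F=m/2$ faces.

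Next I would count edges. The decomposition of $Y_{m,n}$ has $(m-1)n$ edges (as used in the earlier corollary), and the point is that $\iota'_\ast$ acts freely on them, so the quotient has $E=(m-1)n/2$ edges. I expect this edge count to be the main obstacle, and the decisive input is that $D(\iota'_\ast)=I$ from proposition \ref{prop:iota_ast}. A nontrivial affine automorphism with derivative $I$ can fix no regular point, since near such a point it would be a translation fixing that point, hence the identity there and, by connectedness, everywhere. The interior of each edge consists of regular points, and a translation by a nonzero vector cannot preserve a bounded segment (while the zero vector again forces the identity), so $\iota'_\ast$ cannot fix any edge even setwise. Therefore the edges fall into free orbits of size two, giving $E=(m-1)n/2$.

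Finally, the number of vertices $V$ is exactly the number of vertex classes established in the preceding proposition on the singularities of $(Y^e_{m,n},\eta^e_{m,n})$: we have $V=\gamma$ when $m/\gamma$ and $n/\gamma$ are both odd, and $V=\gamma/2$ otherwise. (Only the $0$-cells distinguish the two cases: in the odd--odd case $\iota'_\ast$ fixes the $\gamma$ cone points, which is reflected in the halving of the cone angle recorded in that proposition, while otherwise it pairs them freely.) Substituting into $\chi=V-E+F$, the odd--odd case gives $\gamma-(m-1)n/2+m/2=(m+n+2\gamma-mn)/2$, and the remaining case gives $\gamma/2-(m-1)n/2+m/2=(m+n+\gamma-mn)/2$, which is the asserted value of $\chi(X^e_{m,n},\omega^e_{m,n})$.
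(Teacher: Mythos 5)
Your proof is correct and takes essentially the same route as the paper, whose own argument ("we can compute the Euler characteristic as before") is exactly this cell count on the quotient: $F=m/2$ faces, $E=(m-1)n/2$ edges, and $V=\gamma$ or $\gamma/2$ vertices read off from the preceding singularities proposition, combined via $\chi=V-E+F$. The only difference is that you explicitly justify the freeness of $\iota'_\ast$ on faces and edges (using $D(\iota'_\ast)=I$), a point the paper leaves implicit.
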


\section{The Veech groups}
\label{sect:veech_group}

\subsection{The orthogonal groups of $(Y_{m,n}, \eta_{m,n})$}

Given a translation surface $(X, \omega)$, we define the {\em orthogonal group} $O(X, \omega)=\GL(X, \omega) \cap O(2, \R)$. 
These are the derivatives of affine automorphisms which preserve the Euclidean metric. We define the additional matrix
\begin{equation}
\label{eq:orthogongal_matrices}
Y_n=\left[\begin{array}{rr} 
\cos \frac{\pi}{n} & -\sin \frac{\pi}{n} \\
-\sin \frac{\pi}{n} & -\cos \frac{\pi}{n}
\end{array}\right]
\end{equation}
The matrices $E$ and $Y_n$ generate a dihedral group of order $4n$, and satisfy the relations
$E^2=Y_n^2=I$ and $(EY_n)^n=-I$.

\begin{proposition}[Orthogonal group of $(Y_{m,n}, \eta_{m,n})$.]
\label{prop:orthogonal}
Suppose $(Y_{m,n}, \eta_{m,n})$ is not a torus.
If $m$ and $n$ are not both even, then $O(Y_{m,n}, \eta_{m,n})=\langle E, Y_n \rangle$.
If both $m$ and $n$ are even, then $O(Y_{m,n}, \eta_{m,n})=\langle E, Y_n E Y_n \rangle$, a dihedral group of order $2n$.
\end{proposition}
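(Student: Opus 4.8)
The plan is to pin down $O(Y_{m,n},\eta_{m,n})$ by a direction-counting argument, using the fact that an orthogonal automorphism is a length-preserving isometry. The first observation is purely group-theoretic: the edges of the semiregular $2n$-gons $P(k)$ all lie in the $n$ equally spaced undirected directions $\mathcal D=\{k\pi/n:k\in\Z\}$, and $\langle E,Y_n\rangle$ is exactly the stabilizer of $\mathcal D$ inside $O(2,\R)$, a dihedral group of order $4n$ whose rotation part is generated by $EY_n$ (rotation by $\pi/n$). When $n$ is even the directions split into two ``parity classes'' of $n/2$ lines each (even-index and odd-index lines), the rotation $(EY_n)^2$ by $2\pi/n$ and the reflection $E$ stabilize each class, and $\langle E,Y_nEY_n\rangle=\langle E,(EY_n)^2\rangle$ is precisely the index-two subgroup of order $2n$ stabilizing one parity class, while the odd elements (rotation by $\pi/n$, the reflection $Y_n$) interchange the two classes. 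The whole proposition should then follow from determining which directions of $\mathcal D$ are ``seen'' by the shortest saddle connections.

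For the upper bound I would argue that any $g\in O(Y_{m,n},\eta_{m,n})$ permutes the set of shortest saddle connections, and identify these with the short edges of the polygons, namely the edges of length $\sin\frac{\pi}{m}$ (the minimum of $\sin\frac{k\pi}{m}$ for $1\le k\le m-1$). Using the edge vectors computed in Lemma \ref{lem:mu} and equations \ref{eq:wi_odd}--\ref{eq:wi_even}, one checks the parity bookkeeping: the two end polygons $P(0)$ and $P(m-1)$ degenerate to regular $n$-gons, and together with $P(1)$, $P(m-2)$ they account for all the length-$\sin\frac{\pi}{m}$ edges. When $m$ and $n$ are not both even these edges occupy all of $\mathcal D$, whereas when both are even they all lie in a single parity class (the odd-index lines). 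Hence $g$ preserves $\mathcal D$, giving $g\in\langle E,Y_n\rangle$ in the first case, and $g$ stabilizes one parity class, giving $g\in\langle E,Y_nEY_n\rangle$ in the both-even case.

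For the lower bound I would exhibit the required symmetries on the polygon chain $P(0),\ldots,P(m-1)$. The rotation $(EY_n)^2$ by $2\pi/n$ is the rotational symmetry of each semiregular $2n$-gon, and since it is applied uniformly to every polygon it carries the edge identifications of \S\ref{sect:semiregular_background} to themselves; one reflection is obtained from the left--right mirror symmetry of the chain (equivalently from Corollary \ref{cor:e} and Proposition \ref{prop:dihedral} transported through $\mu$). These already lie in the stabilizer of a parity class, so by the upper bound they generate exactly $\langle E,Y_nEY_n\rangle$ in the both-even case. When $m$ and $n$ are not both even I must additionally produce an odd symmetry: rotation by $\pi/n$ sends $P_n(a,b)$ to $P_n(b,a)$, i.e.\ swaps the two edge lengths, and the parity-dependent definition of $P(k)$ is arranged so that this swap, composed with the chain reversal $P(k)\mapsto P(m-1-k)$, is a relabeling of the same glued chain (the shapes match because $\sin\frac{(m-k)\pi}{m}=\sin\frac{k\pi}{m}$). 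Verifying that the gluings are respected then places $Y_n$ in $O(Y_{m,n},\eta_{m,n})$, and with the upper bound this forces equality with $\langle E,Y_n\rangle$.

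The main obstacle I expect is exactly this parity bookkeeping, carried out separately over the parities of $m$ and $n$: confirming that the shortest saddle connections really are the minimal polygon edges and no shorter diagonal intervenes, and checking that the edge-length swap induced by rotation by $\pi/n$ extends compatibly across the edge identifications \ref{eq:wi_odd}--\ref{eq:wi_even}. The both-even case is the conceptually delicate one, but here the work is done for free by the upper bound: because both end polygons land in the same parity class of directions, no orthogonal map can realize the $a\leftrightarrow b$ swap, so the odd coset is genuinely absent and the group caps at order $2n$. Once the shortest-saddle-connection claim is verified and the odd symmetry is produced in the remaining cases, the upper and lower bounds coincide and yield the two stated equalities.
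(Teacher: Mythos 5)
Your proposal is correct and takes essentially the same route as the paper's proof: the upper bound comes from observing that an orthogonal affine automorphism permutes the shortest saddle connections (the boundaries of the degenerate end polygons $P(0)$ and $P(m-1)$), so its derivative must stabilize their set of directions, and the lower bound comes from exhibiting explicit symmetries of the semiregular polygons together with the chain reversal $P(k)\mapsto P(m-1-k)$ when $m$ and $n$ are not both even. The only differences are cosmetic: you generate the relevant dihedral groups using the rotations $(EY_n)^2$ and $\pi/n$-rotation-plus-reversal where the paper works directly with the reflections $E$, $Y_nEY_n$, and $Y_n$.
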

\begin{proof}
Recall, $(Y_{m,n}, \eta_{m,n})$ is a union of the semiregular $2n$-gons $P(0), P(1), \ldots, P(m-1)$. 
Both $E$ and $Y_nEY_n$ are symmetries of every semiregular $2n$-gon. In particular, there are affine automorphisms
of $(Y_{m,n}, \eta_{m,n})$ with derivatives $E$ and $Y_nEY_n$ which preserve the each of the polygons $P(0), P(1), \ldots, P(m-1)$. 
In addition, when $m$ or $n$ is odd, then $Y_n\big(P(i)\big)=P(m-1-i)$, up to translation. This action extends to an
affine automorphism of $(X_{m,n}, \omega_{m,n})$ with derivative $Y_n$.

Conversely, suppose $M \in O(Y_{m,n}, \eta_{m,n})$. Then the associated affine automorphism must permute the shortest saddle connections. These are the boundaries of the polygons
$P(0)$ and $P(m-1)$. (Since $(Y_{m,n}, \eta_{m,n})$ is not a torus, all the vertices are singularities by \ref{prop:sing1}.)
In particular, $M$ must preserve the set of directions in which these shortest saddle connections point. 
When $m$ and $n$ are not both even, the group of matrices with this property is $\langle E, Y_n \rangle$. When 
both $m$ and $n$ are even, $\langle E, Y_n E Y_n \rangle$ is the group of matrices with this property.
Consequently, $M$ must be in this group.
\end{proof}

We also cover the case of $(Y_{m,n}^e, \eta_{m,n}^e)$. The proof is nearly identical, so we omit it.

\begin{proposition}[Orthogonal group of $(Y_{m,n}^e, \eta_{m,n}^e)$.]
\label{prop:orthogonal2}
Suppose $m$ and $n$ are even, and $(Y_{m,n}^e, \eta_{m,n}^e)$ is not a torus.
Then $O(Y_{m,n}, \eta_{m,n})=\langle E, Y_n E Y_n \rangle$.
\end{proposition}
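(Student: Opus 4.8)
The plan is to mirror the proof of proposition~\ref{prop:orthogonal}, adjusting only the points where the quotient by $\iota'_\ast$ changes the structure. Recall that $(Y_{m,n}^e, \eta_{m,n}^e)=(Y_{m,n}, \eta_{m,n})/\iota'_\ast$, where $\iota'_\ast$ interchanges $P(i)$ and $P(m-1-i)$ and has derivative $I$. First I would verify the forward inclusion $\langle E, Y_n E Y_n \rangle \subset O(Y_{m,n}^e, \eta_{m,n}^e)$. As in the previous proposition, both $E$ and $Y_n E Y_n$ are symmetries of every individual semiregular $2n$-gon $P(k)$, so each induces an affine automorphism of $(Y_{m,n}, \eta_{m,n})$ preserving the polygon decomposition. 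Since these automorphisms commute with the free involution $\iota'_\ast$ (both having derivative $I$ or being built from polygon symmetries that respect the $P(i)\leftrightarrow P(m-1-i)$ pairing), they descend to affine automorphisms of the quotient $(Y_{m,n}^e, \eta_{m,n}^e)$ with the same derivatives.

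For the reverse inclusion, I would argue exactly as before. Suppose $M \in O(Y_{m,n}^e, \eta_{m,n}^e)$. The corresponding affine automorphism must permute the shortest saddle connections of the quotient surface. By the singularity analysis, and since $m,n$ are even so $(Y_{m,n}^e, \eta_{m,n}^e)$ is not a torus, all vertices are genuine cone singularities, and the shortest saddle connections are the images of the boundary edges of the (now identified) polygons $P(0)$ and $P(m-1)$. Hence $M$ must preserve the finite set of directions spanned by these shortest saddle connections, and when $m$ and $n$ are both even the group of orthogonal matrices preserving this set of directions is precisely $\langle E, Y_n E Y_n \rangle$, forcing $M \in \langle E, Y_n E Y_n \rangle$.

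The one genuine point of care — which I expect to be the main obstacle — is the descent argument and the identification of the shortest saddle connections on the quotient. On the cover, $Y_n$ sends $P(i)$ to $P(m-1-i)$, so $Y_n$ is \emph{not} in the orthogonal group when $m,n$ are both even (this is already the content of proposition~\ref{prop:orthogonal}); but after quotienting by $\iota'_\ast$, which itself identifies $P(i)$ with $P(m-1-i)$, one must check that this does not resurrect $Y_n$ as a symmetry of the quotient and thereby enlarge the orthogonal group. The resolution is that $\iota'_\ast$ has derivative $I$, so it cannot alter which derivative matrices are realized, and the set of shortest-saddle-connection directions on the quotient is unchanged from the even case of the cover. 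Thus the obstruction is purely bookkeeping about how the identification interacts with the symmetries, and the conclusion $O(Y_{m,n}^e, \eta_{m,n}^e)=\langle E, Y_n E Y_n \rangle$ follows by the same dihedral-group count. This is precisely why the authors can write ``the proof is nearly identical, so we omit it.''
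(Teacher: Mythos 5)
Your proposal is correct and is essentially the proof the paper has in mind: the paper omits it as ``nearly identical'' to Proposition \ref{prop:orthogonal}, and your version---descending the polygon symmetries with derivatives $E$ and $Y_n E Y_n$ to the quotient, and noting that the shortest saddle connections of $(Y_{m,n}^e, \eta_{m,n}^e)$ are the images of the boundary edges of $P(0)=P(m-1)$, so their set of directions is unchanged and the orthogonal matrices preserving it form exactly $\langle E, Y_n E Y_n \rangle$---is that argument. (One small caveat: your remark that $D(\iota'_\ast)=I$ ``cannot alter which derivative matrices are realized'' is not a valid principle in general, since quotients can acquire extra affine symmetries---this is precisely why Theorem \ref{thm:even_veech_groups} requires proof---but your argument does not actually rest on it, because the saddle-connection direction count is what excludes $Y_n$.)
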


The proof proceeds in the same manner. 

\subsection{The Veech groups}

In this section, we prove theorems \ref{thm:veech_groups} and \ref{thm:even_veech_groups} which prescribe the Veech groups
of the surfaces $(X_{m,n}, \omega_{m,n})$ and $(X_{m,n}^e, \omega_{m,n}^e)$. 
The following establishes the Veech group of $(X_{m,n}, \omega_{m,n})$.

\begin{proof}[Proof of theorem \ref{thm:veech_groups}]
We leave it to the reader to check that when $mn<10$, then $(X_{m,n}, \omega_{m,n})$ is a torus. Now assume $mn \geq 10$. 
We have the following relations between matrices.
\begin{eqnarray*}
A=D(\nu)^{-1} \circ E \circ D(\nu), \quad
B=D(\mu)^{-1} \circ E \circ D(\mu) \quad \textrm{and} \\
C=D(\mu)^{-1} \circ Y_n \circ D(\mu)=-I \circ D(\nu)^{-1} \circ Y_m \circ D(\nu).
\end{eqnarray*}
In particular, theorem \ref{thm:semiregular} implies that $A,B \in \GL(X_{m,n}, \omega_{m,n})$, by pulling back the automorphisms. Similarly,
$C \in \GL(X_{m,n}, \omega_{m,n})$ when $m$ and $n$ not both even, and $CAC, CBC \in \GL(X_{m,n}, \omega_{m,n})$ when both $m$ and $n$ are even.
When $m=n$, we have $E \in \GL(X_{m,m}, \omega_{m,m})$ by corollary \ref{cor:e}. For all $m$ and $n$, this proves that the group described in 
theorem \ref{thm:veech_groups} is really contained in the $\GL(X_{m,n}, \omega_{m,n})$.

Now we will see that this is the whole Veech group. Let $\Gamma_{m,n} \subset \SL(2,\R)$ denote the orientation preserving subgroup of
the group described in theorem \ref{thm:veech_groups} as the Veech group of $(X_{m,n}, \omega_{m,n})$. We have shown above that 
$\Gamma_{m,n} \subset \SL(X_{m,n}, \omega_{m,n})$. 

Let $M$ be an orbifold which is topologically a $2$-sphere (possibly with punctures). Let
$\Sigma$ be the set of singularities of $M$. That is, $\Sigma$ is the collection of cone points and punctures of $M$.
In this specific case, the {\em Euler number} of $M$ is given by the formula 
\begin{equation}
\label{eq:euler}
\chi(M)=2+\sum_{\sigma \in \Sigma} (\frac{1}{|G_\sigma|}-1), 
\end{equation}
where $G_\sigma$ is the group associated to the singularity $s$, and $|G_\sigma|$ denotes the order of this group. 
Treat $1/|G_\sigma|=0$ if $G_\sigma$ is infinite, ie. when $\sigma$ is a puncture. 
For more information on the Euler number of an orbifold see \cite[chapter 13]{ThNotes}.
Note that a hyperbolic orbifold must have negative Euler number. Moreover, if $M \to N$ is a covering map of
degree $d$, then $\chi(N)=\chi(M)/d$. In particular, we have $\chi(M) \leq \chi(N)$ with equality implying that $M=N$.
Note further that adding more singular points only lowers the Euler number.

We apply this argument to the case $M={\mathbb H}^2/\Gamma_{m,n}$ and $N={\mathbb H}^2/\SL(X_{m,n}, \omega_{m,n})$.
We know both $M$ and $N$ are spheres, because $M$ is a sphere and covers $N$. 
We know that $N$ has at least one puncture, corresponding to the horizontal cylinder decomposition. If both $n$ and $m$ are even,
$N$ must have another puncture corresponding to the vertical cylinder decomposition. Here, no element of $\SL(X_{m,n}, \omega_{m,n})$ may send the horizontal
direction to the vertical direction. This is because when $m$ and $n$ are even, the number of maximal horizontal and vertical cylinders of $(X_{m,n}, \omega_{m,n})$ differ by one.

Now we consider the finite order singularities. These are fixed points of maximal orthogonal subgroups (orthogonal up to conjugation). We utilize theorem \ref{thm:semiregular} and proposition \ref{prop:orthogonal} to determine two subgroups of $\SL(X_{m,n}, \omega_{m,n})$ which are orthogonal. For two subgroups to count in the formula \ref{eq:euler}, they must not differ by conjugation in $\SL(X_{m,n}, \omega_{m,n})$. In particular, we use the fact that if they differ in orders, then they do not differ by conjugation.
We have the following special cases of orthogonal
subgroups, for which we can verify distinctness up to conjugation.
Note that for the orbifold calculation, we consider the group order in $\PSL(2,\R)$, because $-I$ acts trivially on $\H^2$.
\begin{itemize}
\item If $m$ and $n$ are not both even and $m \neq n$, we have groups of order $m$ and $n$.
\item If $m$ and $n$ are even with $m \neq n$, we have groups of order $m/2$ and $n/2$. 
\item If $m=n$ is odd, we have at least one group of order $m$ and one group of order a multiple of $2$. (The group of order two is
$\langle EC\rangle$. $EC$ can not be conjugated to lie in the group of order $m$ because $2$ does not divide $m$.)
\item If $m=n$ is even, we have at least one group of order $m/2$.
\end{itemize}
In all cases, we have determined that $\chi(M) \geq \chi(N)$ and thus $\SL(X_{m,n}, \omega_{m,n})=\Gamma_{m,n}$. 

Finally, we consider orientation reversing elements. To see that $\GL(X_{m,n}, \omega_{m,n})$ is as stated in the theorem, note that $A \in \GL(X_{m,n}, \omega_{m,n})$. The orientation 
preserving subgroup is always index two inside a group with orientation reversing elements. Thus a single orientation reversing
element plus the orientation preserving subgroup determine the whole group.
\end{proof}

The following establishes the Veech group of $(X_{m,n}^e, \omega_{m,n}^e)$ for $m$ and $n$ even.

\begin{proof}[Proof of theorem \ref{thm:even_veech_groups}]
Again, by proposition \ref{prop:orthogonal2} and corollary \ref{cor:semiregular2},
we see $A,B,CAC,CBC \in \GL(X_{m,n}^e, \omega_{m,n}^e)$ by pulling back the actions of the dihedral groups. Thus
$\GL(X_{m,n}, \omega_{m,n}) \subset \GL(X_{m,n}^e, \omega_{m,n}^e)$.

Now we check that this is everything. To do this, note that there must be two cusps in $\H^2/\SL(X_{m,n}^e, \omega_{m,n}^e)$, because there are again a different number
of horizontal and vertical cylinders. In addition for $m \neq n$, the maximal orthogonal subgroups (orthogonal up to conjugation) 
are of orders $m/2$ and $n/2$. So, $\H^2/\SL(X_{m,n}^e, \omega_{m,n}^e)$
has two cone points of these orders. When $m=n$, then we have at least one orthogonal group of order $m/2$. Then,
an orbifold Euler number computation shows that it must be that $\SL(X_{m,n}^e, \omega_{m,n}^e) = \SL(X_{m,n}, \omega_{m,n})$.
Again, we can see $\GL(X_{m,n}^e, \omega_{m,n}^e) = \GL(X_{m,n}, \omega_{m,n})$ by noting that $A$ appears in both groups.
\end{proof}

\section{Primitivity}
\label{sect:primitivity}

To show primitivity, we consider the following consequence of the theorem \ref{thm:primitive} of M\"oller.
Recall, if a translation surface has the lattice property, then it only covers a torus if its Veech group is arithmetic.
Let $(X, \omega)$ and $(X_0, \omega_0)$ be translation surfaces and $f:X \to X_0$ be a covering.
Following \cite{HS01}, we say $f$ is a {\em balanced covering} if the image of every zero of $\omega$ is a zero of
$\omega_0$. 

\begin{proposition}
Let $(X,\omega)$ be a translation surface which does not cover a torus. Let $f:X \to X_0$ be
the unique covering of a primitive translation surface $(X_0,\omega_0)$ guaranteed by theorem \ref{thm:primitive}. 
If $\Aff(X,\omega)$ acts transitively on the zeros of $\omega$, then $f$ is a {\em balanced covering}.  
\end{proposition}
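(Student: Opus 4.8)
The plan is to argue by contradiction, exploiting the fact that the primitive covering $f:X \to X_0$ is \emph{unique} (by theorem \ref{thm:primitive}, since $(X,\omega)$ does not cover a torus, so the primitive surface $(X_0,\omega_0)$ has genus greater than $1$). First I would recall the basic local structure of a covering of translation surfaces: since $f^\ast(\omega_0)=\omega$, away from zeros $f$ is a local translation (hence locally an isometry in the flat metrics), and at a ramification point of local degree $k$ a regular point of $X_0$ can lift to a cone point of $X$ of angle $2\pi k$, while a zero of $\omega_0$ of order $\ell$ lifts to points of $\omega$-order $k(\ell+1)-1$. The key observation is that the zeros of $\omega$ all lie over the \emph{ramification locus together with the preimage of the zeros of $\omega_0$}: a zero of $\omega$ must map either to a zero of $\omega_0$ (this is the balanced case) or to a regular point of $X_0$ at which $f$ is ramified.

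Next I would bring in the transitivity hypothesis. Suppose, for contradiction, that $f$ is not balanced, so some zero $p$ of $\omega$ maps to a regular point $q=f(p)$ of $\omega_0$. The crucial point is that the uniqueness of $f$ forces $f$ to be equivariant with respect to the affine automorphism group: for any $\phi \in \Aff(X,\omega)$, the composition $f \circ \phi$ is again a covering of a primitive surface of the same genus, so by uniqueness there is an induced affine automorphism $\phi_0 \in \Aff(X_0,\omega_0)$ with $f \circ \phi = \phi_0 \circ f$. (This equivariance is exactly the kind of statement that uniqueness of the primitive quotient is designed to yield, and I would formalize it by checking that $f\circ\phi$ and $f$ have the same fibers up to the automorphism $\phi_0$.)

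With equivariance in hand, I would use transitivity of $\Aff(X,\omega)$ on the zeros of $\omega$ to spread the bad behavior to \emph{every} zero. Concretely, given any other zero $p'$ of $\omega$, choose $\phi \in \Aff(X,\omega)$ with $\phi(p)=p'$; then $f(p') = f(\phi(p)) = \phi_0(f(p)) = \phi_0(q)$. Since $\phi_0$ is an affine automorphism of $(X_0,\omega_0)$ it permutes the zeros of $\omega_0$ and sends regular points to regular points, so $\phi_0(q)$ is again a regular point of $X_0$. Thus \emph{every} zero of $\omega$ maps to a regular point of $X_0$, i.e. $f$ misses the zeros of $\omega_0$ entirely in its image of the zero set. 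The contradiction comes from comparing the flat metrics: the zeros of $\omega_0$ carry all of the negative curvature of $(X_0,\omega_0)$, which is nonzero because $(X_0,\omega_0)$ has genus greater than $1$; each such zero must have a preimage under the surjective map $f$, and that preimage carries cone angle at least $2\pi(\ell+1)$ where $\ell \geq 1$ is the order of the zero downstairs, hence is itself a cone point, i.e. a zero of $\omega$. This preimage is then a zero of $\omega$ mapping to a zero of $\omega_0$, contradicting that every zero of $\omega$ maps to a regular point.

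The main obstacle I expect is establishing the equivariance of $f$ with respect to $\Aff(X,\omega)$ rigorously; everything else is a clean local-to-global bookkeeping of cone angles. The subtlety is that $\phi_0$ is only guaranteed to exist once one knows the primitive quotient and its covering map are genuinely canonical, which is precisely the content of the uniqueness clause of theorem \ref{thm:primitive} (valid because $\textit{genus}(X_0)>1$, itself a consequence of the hypothesis that $(X,\omega)$ does not cover a torus together with theorem \ref{thm:gutkin-judge}). I would therefore open the proof by carefully justifying $\textit{genus}(X_0)>1$ and the resulting uniqueness, and then derive the affine equivariance as a formal consequence, before running the transitivity-plus-surjectivity argument above.
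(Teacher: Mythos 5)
Your proposal is correct and follows essentially the same route as the paper: both rest on deriving $\Aff$-equivariance of $f$ from the uniqueness clause of theorem \ref{thm:primitive} (together with the inclusion $\SL(X,\omega)\subset\SL(X_0,\omega_0)$ furnishing the downstairs automorphism), using transitivity to propagate behavior across all zeros, and noting that preimages of zeros of $\omega_0$ are zeros of $\omega$. The only difference is organizational — the paper argues directly (lift one zero of $\omega_0$, then push it to every zero of $\omega$) while you run the contrapositive — which is a trivial rearrangement of the same argument.
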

\begin{proof}
We know $X_0$ is not a torus. Then $\omega_0$ has a zero $z \in X_0$. Choose $x \in X$ so that $f(x)=z$. 
Such an $x$ must be a zero of $\omega$. Let $y \in X$ be another zero. 
Then there is a $\rho \in \Aff(X,\omega)$ for which $\rho(x)=y$. 
Let $\phi:X_0 \to X_0$ be the affine automorphism with $D(\phi)=D(\rho)^{-1}$, which exists by theorem \ref{thm:primitive}.
This theorem also guarantees the uniqueness of the covering $X \to X_0$. Thus, $\phi \circ f \circ \rho=f$.
In particular, $f(y)=\phi^{-1} \circ f(x)=\phi^{-1}(z)$ which must be a zero.  
\end{proof}

In order to apply this, we need the following.

\begin{proposition}
$\Aff(X_{m,n}, \omega_{m,n})$ acts transitively on zeros of $\omega_{m,n}$. 
For $m$ and $n$ even, $\Aff(X^e_{m,n}, \omega^e_{m,n})$ acts transitively on zeros of $\omega^e_{m,n}$. 
\end{proposition}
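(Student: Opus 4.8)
The plan is to use the semiregular polygon decomposition, theorem \ref{thm:semiregular}, to transport the problem to the surface $(Y_{m,n}, \eta_{m,n})$, where the zeros are the vertices of the polygons $P(0), \ldots, P(m-1)$ and the relevant affine symmetries are readily visible. Since $\mu:(X_{m,n}, \omega_{m,n}) \to (Y_{m,n}, \eta_{m,n})$ is an affine homeomorphism, $\Aff(X_{m,n}, \omega_{m,n})$ acts transitively on the zeros of $\omega_{m,n}$ if and only if $\Aff(Y_{m,n}, \eta_{m,n})$ acts transitively on the zeros of $\eta_{m,n}$. So I would recast the entire statement in terms of $(Y_{m,n}, \eta_{m,n})$, and analogously recast the $(X_{m,n}^e, \omega_{m,n}^e)$ claim in terms of $(Y_{m,n}^e, \eta_{m,n}^e)$ using corollary \ref{cor:semiregular2}.

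The key ingredient is the orthogonal symmetry $Y_n$ (or rather its pullback) produced in the course of proving proposition \ref{prop:orthogonal}. Recall from that proof that when $m$ or $n$ is odd, the automorphism with derivative $Y_n$ sends $P(i)$ to $P(m-1-i)$ (up to translation), while $E$ and $Y_n E Y_n$ preserve each polygon individually. I would first argue that the affine automorphisms preserving the decomposition act transitively on the vertices \emph{within} the union of all polygons: inside a single semiregular $2n$-gon $P(i)$, the orthogonal symmetries $E$ and $Y_n E Y_n$ (which generate a dihedral group acting on $P(i)$) permute its vertices transitively among the two vertex classes forced by the bipartite/even-odd edge structure, and combined with the rotational part they move any vertex of $P(i)$ to any other vertex of the same singularity type of $P(i)$. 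Then the symmetry $Y_n$, which swaps $P(i) \leftrightarrow P(m-1-i)$, links the polygons together. Tracking how the vertex equivalence classes of proposition \ref{prop:sing1} are distributed across the polygons, and noting that the rotation by $Y_n E Y_n$ permutes the $\gamma$ singularities cyclically, I would conclude that the group generated by these automorphisms is transitive on all $\gamma$ equivalence classes of zeros.

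For the surface $(Y_{m,n}^e, \eta_{m,n}^e)$ when $m$ and $n$ are even, the same strategy applies using proposition \ref{prop:orthogonal2}: the orthogonal group is $\langle E, Y_n E Y_n \rangle$, which still acts on the semiregular polygons making up the quotient, and I would verify that its action on the $\gamma$ or $\gamma/2$ vertex classes (as computed in the singularities proposition for $(Y_{m,n}^e, \eta_{m,n}^e)$) is transitive, again by combining the within-polygon dihedral action with the rotation $(Y_n E)$ cycling through singularities.

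The main obstacle I anticipate is the careful bookkeeping of \emph{which} vertex of which polygon belongs to which of the $\gamma$ singularity classes, and confirming that the available symmetries genuinely connect all of them rather than splitting the vertices into several orbits. The delicate point is that the two vertex-types of a single semiregular $2n$-gon may or may not lie in the same equivalence class on the glued surface, and the rotational symmetry $(Y_n E)$ of order $n$ must be shown to act as a single cycle (or to generate a transitive action) on the $\gamma = \gcd(m,n)$ singularities. I would resolve this by following a vertex explicitly around the cone-angle rotation exactly as in the proof of proposition \ref{prop:sing1}, identifying the orbit of the orthogonal group with a full equivalence class, and then invoking the $P(i) \leftrightarrow P(m-1-i)$ swap to merge whatever classes remain.
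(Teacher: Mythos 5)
Your first move (transporting the problem to $(Y_{m,n},\eta_{m,n})$ via theorem \ref{thm:semiregular} and corollary \ref{cor:semiregular2}) matches the paper, but the core of your argument has a genuine gap: you never establish the one fact that makes transitivity provable, and the bookkeeping you defer is partly circular. The paper's proof rests on the observation that \emph{every} zero is already a vertex of $P(0)$: any vertex $v$ of $P(k)$ is an endpoint of an edge of $P(k)$ that is glued to $P(k-1)$, so $v$ is also a vertex of $P(k-1)$, and by induction a vertex of $P(0)$. Since $P(0)$ is a (degenerate) regular $n$-gon, the rotation by $\frac{2\pi}{n}$ --- which equals $E\cdot Y_nEY_n=(EY_n)^2$, hence is the derivative of an affine automorphism preserving each $P(i)$ in \emph{all} parity cases --- acts transitively on the $n$ vertices of $P(0)$, and therefore on the zeros. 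No swap $P(i)\leftrightarrow P(m-1-i)$ and no tracking of the $\gamma$ equivalence classes is needed. Your proposal instead takes as an ingredient that the rotation ``permutes the $\gamma$ singularities cyclically''; but that statement \emph{is} the transitivity to be proved, and without knowing how the vertex classes are distributed among the polygons (i.e., without the induction above) you have no independent proof of it.

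The second problem is that your fallback mechanism fails exactly where you need it. When $m$ and $n$ are both even, $Y_n$ is not the derivative of any affine automorphism (propositions \ref{prop:orthogonal} and \ref{prop:orthogonal2} give only $\langle E, Y_nEY_n\rangle$), so neither the swap $P(i)\leftrightarrow P(m-1-i)$ nor the ``rotation $(Y_nE)$'' you invoke for $(Y^e_{m,n},\eta^e_{m,n})$ is available; moreover, on the quotient surface the swap has been killed by passing to $(Y_{m,n},\eta_{m,n})/\iota'_\ast$, so there is nothing left to ``merge whatever classes remain.'' (Note also that $Y_nEY_n$ is a reflection, not a rotation.) Your instinct to follow a vertex around its cone angle as in proposition \ref{prop:sing1} is sound --- the link of any vertex passes through every polygon, which is exactly how one discovers that every singularity touches $P(0)$ --- but once you have that observation, the argument collapses to the paper's one-paragraph proof and the within-polygon/swap orbit analysis becomes unnecessary.
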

\begin{proof}
We use the semiregular decomposition given by theorem \ref{thm:semiregular}. Consider the surface $(Y_{m,n}, \eta_{m,n})$,
which is a union of the semiregular $2n$-gons $P(0), P(1), \ldots, P(m-1)$.  Let $v \in (Y_{m,n}, \eta_{m,n})$ be a vertex of $P(k)$. Then
one of the adjacent edges to $v$ joins $P(k)$ to $P(k-1)$. Thus, $v$ is also a vertex of $P(k-1)$. By induction, we see that
$v$ is a vertex of $P(0)$. But, the group generated by a rotation by $\frac{2\pi}{n}$ preserves $P(0)$ setwise and acts
transitively on the vertices of $P(0)$. This group action extends to a group of affine automorphisms of $(Y_{m,n}, \eta_{m,n})$. 
Note that essentially the same argument works for $(X^e_{m,n}, \omega^e_{m,n})$.
\end{proof}

We have the following proof of primitivity. 

\begin{proof}[Proof of theorem \ref{thm:primitivity}]
We only consider the case where $m$ and $n$ are not both even. A slight variant of the argument below also holds for $(X^e_{m,n}, \omega^e_{m,n})$. 

Primitivity of $(Y_{m,n}, \eta_{m,n})$ is equivalent to primitivity of $(X_{m,n}, \omega_{m,n})$ by theorem \ref{thm:semiregular}.

Let $f:Y_{m,n} \to X_0$ be a covering of a primitive translation surface $(X_0,\omega_0)$. 
We know that $X_0$ is not a torus, since $(Y_{m,n}, \eta_{m,n})$ has the lattice property but $\SL(Y_{m,n}, \eta_{m,n})$ is not arithmetic.

By the propositions above, $f$ sends zeros of $\eta_{m,n}$ to zeros of $\omega_0$. 
Then it sends saddle connections to saddle connections. 
Thus, if $P$ is a convex polygon in $(Y_{m,n}, \eta_{m,n})$ whose boundary edges
are all saddle connections, then $f(P)$ also has this property in $(X_0, \omega_0)$. Suppose that $f$ is generically $k$-to-one.
Then given any such $P$, the collection of polygons in $f^{-1} \circ f(P)$ is a collection of $k$ isometric polygons with disjoint interiors that are bounded by saddle connections and differ only by translation.
Now consider the polygon $P(0) \subset Y_{m,n}$. The saddle connections bounding $P(0)$ are the shortest of all the saddle connections
of $(Y_{m,n}, \eta_{m,n})$, and the only other saddle connections that are this short bound $P(m-1)$. But, $P(m-1)$ is not a translate of $P(0)$. 
So $f$ is generically one-to-one,  and $(Y_{m,n}, \eta_{m,n})$ is primitive.
\end{proof} 
\section{Formulas for the surfaces}
\label{sect:formulas}
In this section, we establish part (3) of Proposition \ref{prop:formulas}. The proofs of the remaining parts are almost identical. In addition, this final formula is the only one not discussed in
\cite{BM}. Therefore, we will assume both $m$ and $n$ are even in this section.

The formulas in this proposition are an application of the Schwarz-Christoffel Mapping Theorem. 
For background see \cite{DT02}, for instance. This formula has been used in the past to find formulas
for translation surfaces which arise from billiard tables \cite{AI88} \cite{Ward}. 


Fix a choice of $m$ and $n$ even. The surface $(Y^e_{m,n},\eta^e_{m,n})$ admits $n$ affine automorphisms whose derivatives are Euclidean reflections.
(See Proposition \ref{prop:orthogonal2}.)
Let $\sC$ be the set of all closures of the 
connected components of the complement of the union of fixed points of these automorphisms. One component is shown in Figure \ref{fig:reflection_domain}. There are a total of $2n$ components, and each is a simply connected polygonal subset of $(Y^e_{m,n}, \eta^e_{m,n})$ with $m/2+2$ vertices and edges. 

Let $C_0, \ldots, C_{m/2-1}$ denote the center points of the polygons $P(0), \ldots, P(\frac{m}{2}-1)$, respectively. Each connected component 
has the points $C_0, \ldots, C_{m/2-1}$ as vertices. The other two vertices
are a midpoint $M$ of an edge of polygon $P(\frac{m}{2}-1)$, and a singularity $S$.
The components come in two different possible orientations. Let $D \in \sC$ be a component so that as we travel around $\partial D$ counter-clockwise we
visit vertices in the following order:
$$S \to C_0 \to C_1 \to \ldots \to C_{m/2-1} \to M \to S.$$
Figure \ref{fig:reflection_domain} shows a possible $D \in \sC$.

Throughout this section we will use $U$ and $L$ to denote the closures
of the upper and lower half-planes in the Riemann sphere $\widehat \C$. We also define
\begin{equation}
\label{eq:uk}
u_k=2 \cos \left(\pi-\frac{(2k+1)\pi}{m}\right) \quad \text{for $k=0,\ldots,\frac{m}{2}-1$.}
\end{equation}

As an application of 
the Schwarz-Christoffel Mapping Theorem, we have:

\begin{figure}[t]
\begin{center}
\includegraphics[width=5in]{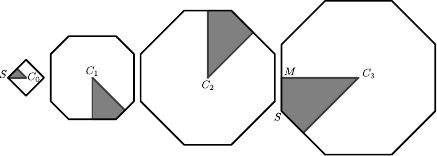}
\caption{This is the surface $(Y^e_{8,4}, \eta^e_{8,4})$. The region bounded by the gray lines is a component of the complement of the collection
of fixed points for affine automorphisms whose derivatives are Euclidean reflections.}
\label{fig:reflection_domain}
\end{center}
\end{figure}

\begin{lemma}
\label{lem:formula}
There is a complex constant $K$ so that the function
$f: U \to \C$ defined by
$$f(u)=K \int_{i}^u (\zeta-2)^{-\frac{1}{2}} \prod_{k=0}^{\frac{m}{2}-1} (\zeta-u_k)^{\frac{1}{n}-1}~d\zeta$$
lifts to a holomorphic bijection $\til f:U \to D \subset Y_{m,n}^e$
so that $\eta_{m,n}^e$ restricted to $D$ is the pullback of $dz$ under the map 
$z=f \circ \til f^{-1}:D \to \C$.
For this lift we have $\til f(\infty)=S$,
$\til f(2)=M$ and
$\til f(u_k)=C_k$ for each $k=0,\ldots,\frac{m}{2}-1$.
\end{lemma}

We prove this lemma in subsection \ref{sect:mapping}.

\subsection{Proof of Proposition \ref{prop:formulas} given the Lemma}
We will now explain how the above lemma leads to the formula in part (3) of Proposition \ref{prop:formulas}.

Consider the mapping $\til f:U \to D$ as in the lemma. We can apply the Schwarz Reflection Principle to extend this map.  
Choose any $D' \in \sC$ which shares an edge $e$ with $D$. By the Schwarz Reflection Principle, we can extend $\til f$ to a map
to $D \cup D'$. This extension is defined on $U \sqcup L/ f^{-1}(e)$, the disjoint union of the closed upper half plane $U$ and the closed lower half plane $L$ 
identified along $f^{-1}(e) \subset \R$. For $u \in L$, we define
$$\til f(u)=R_e \circ \til f(\overline{u}),$$
where $R_e:Y_{m,n}^e \to Y_{m,n}^e$ denotes the reflection in $e$. We also can extend the original map $f:U \to \C$ in a similar way. We obtain a map $f:U \sqcup L/ f^{-1}(e)
\to \C$ whose image in $\C$ is the developed image of $D \cup D'$ obtained by integrating $\eta^e_{m,n}$ over this union. With this definition, we have that 
$\eta^e_{m,n}$ restricted to $D \cup D'$ is the pullback of $dz$ under the map $z=f \circ \til f^{-1}$.

Now consider all of $\sC$. 
The combinatorics of the edge identifications of the components making up $\sC$ can be recorded using a graph. Here we have a vertex for each component in $\sC$ and we draw an edge between two vertices for each edge shared by the corresponding components. Choose a spanning tree for this graph. We inductively apply the Schwarz Reflection Principle along the edges
of the components associated to the edges of the tree. The end result is an extension $\til f$ from a simply connected space $X$, which is a disjoint union of multiple closed upper and lower half planes with identifications by intervals in $\R$, onto the surface $Y^e_{m,n}$. Now
$\eta^e_{m,n}$ is the pullback of $dz$ under the map $z=f \circ \til f^{-1}$ globally.
(There may be multiple choices for a preimage under $\til f$, but each choice satisfies this condition.)

The map $\til f:X \hookrightarrow Y^e_{m,n}$ is one-to-one except on $\til f(\partial X)$. Define
$X'=X/\sim$, where the equivalence relation is defined so that $x_1 \sim x_2$ if $\til f(x_1)=\til f(x_2)$. By definition, $\til f$ descends to a biholomorphic map
$\widehat{f}:X' \to Y^e_{m,n}$. 
Furthermore, $X'$ is branched cover of $\widehat{\C}$. Let $\pi:X' \to \widehat \C$ denote this covering map. By construction, the cover automorphisms of $X'$ are conjugate under $\widehat{f}$ to the action
of $SO(Y^e_{m,n}, \eta^e_{m,n})$ on $Y^e_{m,n}$ by affine automorphisms.

We define two complex valued functions on $X'$. Namely, for $x \in X'$ we define
$$u(x)=\pi(x) \quad \text{and}\quad  w(x)=\frac{1}{f'(x)}.$$
Using the lemma, we see that $w$ and $u$ are related on the upper half plane $\til f^{-1}(D)$ by
$$\frac{1}{w}=K (u-2)^{-\frac{1}{2}} \prod_{k=0}^{\frac{m}{2}-1} (u-u_k)^{\frac{1}{n}-1}.$$
Therefore, globally we have the relationship
\begin{equation}
\label{eq:alg1}
w^n=\frac{1}{K^n} (u-2)^\frac{n}{2} \prod_{k=0}^{\frac{m}{2}-1} (u-u_k)^{n-1}.
\end{equation}
Let $\omega$ denote the pullback of the one form $\eta^e_{m,n}$ under $\widehat{f}^{-1}$.
Since $\eta^e_{m,n}$ is the pullback of $dz$ under $z=f \circ \til f^{-1}$, we have 
\begin{equation}
\label{eq:alg2}
\omega=f'(u)du=\frac{du}{w}.
\end{equation}
We can now observe that points on $X'$ are uniquely determined by their $u$ and $w$ coordinates.
The $u$-coordinate determines the value of the covering projection $\pi:X' \to \C$. For any two distinct points $x_1,x_2 \in X'$ with $u(x_1)=u(x_2)$ we observe that $w(x_2)/w(x_1)$ is an $n$-th root of unity 
recording the element of $SO(Y^e_{m,n}, \eta^e_{m,n})$ whose associated affine automorphism
sends $\widehat{f}(x_1)$ to $\widehat{f}(x_2)$. So the translation surface $(Y^e_{m,n}, \eta^e_{m,n})$ 
is the same as the Riemann surface defined in equation \ref{eq:alg1} equipped with the holomorphic 1-form defined in equation \ref{eq:alg2}. 

The equations given above are not exactly the same as the ones provided in part (3) of Proposition \ref{prop:formulas}. As an alternate coordinate scheme, we define $y$ as a replacement for $w$:
$$y=\frac{(u-2) \prod_{k=0}^{\frac{m}{2}-1} (u-u_k)}{Kw}.$$
After a little algebra, we observe that 
$$y^n=(u-2)^\frac{n}{2} \prod_{k=0}^{\frac{m}{2}-1} (u-u_k)
\quad \text{and} \quad
\omega=\frac{K y~du}{(u-2) \prod_{k=0}^{\frac{m}{2}-1} (u-u_k)}
.$$
This is the same expression as in part (3) of Proposition \ref{prop:formulas}, except for the presence
of the constant $K$ in our expression for the $1$-form. The removal of $K$ has the effect of scaling and rotating the translation surface.

\subsection{The Schwarz-Christoffel Mapping}
\label{sect:mapping}
The goal of this subsection is to prove the lemma above. The lemma is a consequence of the Schwarz-Christoffel Mapping Theorem. 
We will state a variant of this theorem for non-embedded polygons. A {\em generalized polygon} is just a loop formed by a finite list of line segments in the plane. We'll say that a {\em polygonal immersion} is an immersion $\phi$ from a closed disk $\Delta$ into the plane $\C$ whose restriction to $\partial \Delta$ yields a generalized $n$-gon $P$ with vertices $v_0, \ldots, v_{n-1}$. At each vertex $v_j$ we have a well defined notion of an interior angle $\alpha_j>0$. By the Gauss-Bonnet Theorem these $n$ angles sum to $(n-2) \pi.$

\begin{theorem}[Schwarz-Christoffel Mapping Theorem]
\label{thm:sc}
Let $z_0 < z_2< \ldots < z_{n-2}$ be real numbers and let $z_{n-1}=\infty$, considered as a point on the Riemann sphere $\widehat{\C}$.
Choose positive real numbers $\alpha_0, \alpha_1, \ldots, \alpha_{n-1}$ 
whose sum is $(n-2)\pi$.
Let $U$ be the closure of upper half plane in $\widehat{\C}$ and define the function
$$g:U \to \C; \quad f(z)= \int_i^z \prod_{j=0}^{n-2} (\zeta-z_j)^{
(\alpha_j/\pi)-1}~d\zeta.$$
Then, $g$ is a polygonally immersion so that $g(\partial U)$ is a generalized $n$-gon with vertices
$v_j=f(z_j)$ and interior angles $\alpha_j$ for $j=0,\ldots,n-1$.
\end{theorem}

We can use the the Schwarz-Christoffel Mapping Theorem to easily check that the angles of $D$ agree with the angles of $f(U)$.
There are two points that need further attention to prove Lemma \ref{lem:formula}. 

First, we need to show that the polygon $f(\partial U)$ is similar to 
$\dev(\partial D)$, where $\dev:D \to \C$ a the developing map to the plane defined by integrating the holomorphic $1$-form $\eta^e_{m,n}$. 

Second, the lemma claims we can lift the map $f$ to a map to $D \subset Y_{m,n}$. That we can do this is not obvious. It is well known that there are immersed curves in the plane which bound multiple immersed disks. The most famous example is Milnor's Doodle; see \cite[pp. 315]{MilnorCollected3}.
We have
two immersions of closed disks $f:U \to \C$ and $\dev:D \to \C$ which agree on the boundaries of the disks. Saying that
$f$ lifts to a map $\til f:U \to D$ is equivalent to saying that these
two immersed disks are the same (up to precomposition with a homeomorphism). So, if the generalized polygon
$\dev(\partial D)$ bounds multiple disks, we need an additional argument
to obtain the lift.

Fortunately, this second concern is moot: the generalized polygon  $\dev(\partial D)$ only bounds one immersed disk. The polygonal immersion $\dev:D \to \C$ is an example of a ``parking garage\footnote{The author first heard this term used by M. M\"oller.},''
which we define in the next paragraph.

We will say a {\em spiral} is a piecewise differentiable curve
 $\gamma:[0,1] \to \C \smallsetminus \{0\}$
so that 
$\frac{d}{dt} \arg \gamma(t)>0$ for all $t$. To form a closed curve $C$ we augment $\gamma$ to begin with the line segment from $0$ to $\gamma(0)$ and end with the line segment from $\gamma(1)$ to $0$. We will observe that such a curve bounds an immersed disk. We define a closed topological disk
$$\Delta=\{(x,y) \in \R^2~:~ \text{$0 \leq x \leq 1$ and $0 \leq y \leq x$}\}.$$ 
Then we define the immersion $\phi:\Delta \to \C$ by $\phi(x,y)=x \gamma(y/x).$ Observe that $\phi(\partial \Delta)=C$. We call any immersion of a closed disk
into $\C$ a {\em parking garage}, if it differs from a map of the form $\phi:\Delta \to \C$
by precomposition with a homeomorphism. 

\begin{proposition}
\label{prop:lift}
Let $\gamma$ be a spiral. Then, the associated immersion $\phi:\Delta \to \C$ 
is the only immersed disk with boundary $\phi(\partial \Delta)$. That is,
if $\psi:\Delta \to \C$ is another immersion
so that $\psi$ agrees with $\phi$ pointwise on $\partial \Delta$, then there is a homeomorphism
$h:\Delta \to \Delta$ preserving $\partial \Delta$ and satisfying
$\psi = \phi \circ h$.
\end{proposition}

\begin{proof}
Suppose $\psi$ is as in the statement of the proposition. We will construct $h$.

Consider the universal covering map $\pi:X \to \C \smallsetminus \{0\}$. Let $p=(1,0)$ in the boundary of $\Delta$.
Fix $w=\phi(p)=\psi(p) \in \C \smallsetminus \{0\}$
and a lift $\widetilde w \in X$. 
Set $\Delta_0=\Delta \smallsetminus \{(0,0)\}$. Then we have unique lifts $\widetilde \phi, \widetilde \psi:\Delta_0 \to X$ so that $\widetilde \phi(p)=\widetilde \psi(p)=\widetilde w$.

Let $\overline X$ be the one point compactification of $X$. Observe that $\overline X$ is a sphere.
There are a unique continuous extensions $\overline \phi, \overline \psi:\Delta \to \overline X$ of $\widetilde \phi$ and $\widetilde \psi$, respectively. These extensions just send $(0,0)$ to the point added to build $\overline X$.
The images of $\partial \Delta$ under $\overline \phi$ and $\overline \psi$ agree. This is a simple closed curve $\widetilde \gamma$ in $\overline X$, which can be explicitly described in terms of $\gamma$. By the Jordan Curve Theorem, we know $\overline X \smallsetminus \widetilde \gamma$ consists of two disks. The maps $\overline \phi$ and $\overline \psi$ must send $\Delta$ to the same disk, because only one of the disks has bounded image under the map $\pi$. 
Therefore, there is an $h:\Delta \to \Delta$ so that $\widetilde \psi = \widetilde \phi \circ h$.
By construction, the same $h$ satisfies the proposition.
\end{proof}

We will now summarize the ideas we use to Lemma \ref{lem:formula}.
Recall that by the Schwarz-Christoffel Mapping Theorem, the image
of the map $f$ defined in the Lemma is an immersed disk whose boundary is a generalized polygon whose angles agree with the boundary of the immersed disk 
obtained by developing $D \subset Y^e_{m,n}$ into the plane using the $1$-form $\eta^e_{m,n}$. The identification of vertices is as stated in the Lemma. 
Observe that the developing map applied to $D$ is a parking garage (up to postcomposition with a homeomorphism). Because of the above proposition,
to show that $f$ lifts to a map $\til f$, it suffices to show that the generalized polygon $f(\partial U)$ is similar to the generalized polygon obtained by developing $\partial D$ into the plane. We know the angles agree. To prove that there is a similarity, it will typically suffice to check that all but two 
of the edge lengths of these two polygons agree up to a scalar constant. See the formal proof below.

\begin{proof}[Proof of lemma \ref{lem:formula}.]
We may develop $D$ into the plane using the holomorphic $1$-form $\eta_{m,n}^e$. The image of $\partial D$ is a generalized polygon,
which we call $P$. We abuse notation by naming the vertices of $P$ in the same way as the vertices of $\partial D$. Observe
that $P$ is a generalized $\frac{m}{2}+2$-gon. We think of $m$ as fixed, and allow $n$ to vary. 
The angle of $P$ at each $C_k$ is $\frac{\pi}{n}$ for $k=0,\ldots, \frac{m}{2}-1$. And the angle at $M$ is $\frac{\pi}{2}$. 
Set $\kappa=(\cos \frac{\pi}{m}+\cos \frac{\pi}{n})/ \sin \frac{\pi}{n}$.
All the edge lengths of $P$ are trigonometric expressions in $m$ and $n$:
$$|C_{k-1} C_k|=\kappa \sin \frac{k \pi}{m}, \quad \text{for $k=1, \ldots, \frac{m}{2}-1$.}$$
$$|C_{m/2-1} M|=\frac{1}{2}\kappa. \qquad |MS|=\frac{1}{2}. \qquad |S C_0|=\frac{\sin \frac{\pi}{m}}{2 \sin \frac{\pi}{n}}.$$
These quantities can be computed by working with the definition of $(Y_{m,n}^e,\eta_{m,n}^e)$ as a union of
semiregular polygons. 
We consider $n \geq 2$ to be a real number, for reasons that will become apparent at the end of the proof. We can define the generalized polygon $P$ for each real $n \geq 2$, because we have given the angles
and the edge lengths as functions of $n$. (It is an exercise to check that these
angles and edge lengths always lead to a closed generalized polygon when $n \geq 2$.)

Similarly, we define $Q$ to be the polygon $f(\partial U)$. By the Schwarz-Christoffel Theorem, this is always a 
generalized $\frac{m}{2}+2$-gon with vertices $f(u_k)$ for $k=0,\ldots, \frac{m}{2}-1$, $f(2)$ and $f(\infty)$. 
Again this polygon is well defined for all real $n \geq 2$.

We will show that the polygons $P$ and $Q$ differ by a similarity for each $n \geq 2$. The angles match by the Schwarz-Christoffel Theorem. We will now check that certain corresponding pairs of edges are proportional
in length. 

Let $\ell_k$ denote the length of the line segment
$f([u_{k}, u_{k-1}])$ for $k=1,\ldots,\frac{m}{2}-1$. This length is expected to be proportional to the distance from $C_{k-1}$ to $C_{k}$, namely $\kappa \sin \frac{k \pi}{m}$. By definition of $f$,
$$\ell_k=|f(u_{k})-f(u_{k-1})|=\Big| K \int_{u_{k}}^{u_{k-1}}(\zeta-2)^{-\frac{1}{2}} \prod_{k=0}^{\frac{m}{2}-1} (\zeta-u_k)^{\frac{1}{n}-1}~dz \Big|.$$
Now, we make the change of coordinates $\zeta=2 \cos(2 w)$. After some trigonometric manipulations, we see
\begin{equation*}
\ell_k=2^{\frac{n+1}{n}} |K| \Big|\int_{\frac{m-2k+1}{2m}\pi}^{\frac{m-2k-1}{2m}\pi} \frac{\cos w}{\cos(mw)^{1-\frac{1}{n}}}~dw\Big|.
\end{equation*}
Now make a change of coordinates $x=w+\frac{k\pi}{m}-\frac{\pi}{2}$. This yields
$$
\begin{array}{rcl}
\ell_k & = & \displaystyle 2^{\frac{n+1}{n}} |K| \Big|\int_{\frac{-\pi}{2m}}^{\frac{\pi}{2m}} \frac{\cos (x+\frac{\pi}{2}-\frac{k\pi}{m}) }{[\pm\cos(mx)]^{1-\frac{1}{n}}}~dx\Big| \\
 & = &
\displaystyle 2^{\frac{n+1}{n}} |K| \Big|\int_{\frac{-\pi}{2m}}^{\frac{\pi}{2m}}
 \frac{\cos (x)\cos(\frac{\pi}{2}-\frac{k\pi}{m})-\sin (x)\sin(\frac{\pi}{2}-\frac{k\pi}{m})}{\cos(mx)^{1-\frac{1}{n}}}~dx\Big|
\end{array} 
$$
The sine term drops out by symmetry. Also, $\cos(\frac{\pi}{2}-\frac{k\pi}{m})=\sin(\frac{k\pi}{m}).$ Thus we have 
\begin{equation*}
\ell_k = 
2^{\frac{n+1}{n}}  \sin(\frac{k\pi}{m}) |K| \int_{\frac{-\pi}{2m}}^{\frac{\pi}{2m}} \frac{\cos (x)}{\cos(mx)^{1-\frac{1}{n}}}~dx.
\end{equation*}
The integral is now independent of $k$, so we see that $\ell_k$ is proportional to $\sin(\frac{k\pi}{m})$. Thus these edges $f([u_{k-1}, u_{k}])$
are proportional in length to the distance from $C_{k-1}$ to $C_{k}$,
as desired. The proportionality constant is
\begin{equation}
\label{eq:proportionality}
\kappa^{-1} 2^{\frac{n+1}{n}} |K| \int_{\frac{-\pi}{2m}}^{\frac{\pi}{2m}} \frac{\cos (x)}{\cos(mx)^{1-\frac{1}{n}}}~dx.
\end{equation}

We will now discuss one more edge. (We will be leaving out the edges with the singularity $S$ as an endpoint). Let $\ell_0=|f(u_{m/2-1})-f(2)|$.
We need to show that this length differs from the length of the edge
joining $C_{m/2-1}$ to $M$ by the same proportionality constant. The length of the segment $\overline{C_{m/2-1} M}$ is $\frac{1}{2}\kappa$, as remarked above.
Now we will compute $\ell_0$. 
$$\ell_0=|f(u_{m/2-1})-f(2)|=
\Big| K \int_{2}^{u_{m/2-1}}(\zeta-2)^{-\frac{1}{2}} \prod_{k=0}^{\frac{m}{2}-1} (\zeta-u_k)^{\frac{1}{n}-1}~dz \Big|.$$
After making the coordinate change $\zeta=2 \cos(2x)$, we have
$$\ell_0=2^{\frac{n+1}{n}} |K| \big|\int_0^{\frac{\pi}{2m}} \frac{\cos x ~dw}{\cos(mx)^{1-\frac{1}{n}}}\big|=2^{\frac{n+1}{n}} \frac{|K|}{2} \int_{\frac{-\pi}{2m}}^{\frac{\pi}{2m}} \frac{\cos (x)}{\cos(mx)^{1-\frac{1}{n}}}~dx.$$
We observe that this quantity is proportional to the distance from $C_{m/2-1}$ to $M$
for each $n \geq 2$
by the proportionality constant given in Equation \ref{eq:proportionality}.

We conclude by arguing that checking the ratio of these pairs of edges suffices to show the two generalized polygons are similar. We have shown that there is a single similarity carrying the edges $\overline{C_{k-1} C_k}$ to $f([u_{k-1},u_k])$ and carrying $\overline{C_{m/2-1} M}$ to $f([u_{m/2-1}, 2])$, because these edges are proportional in length and the angles between the edges are identical. The angles at the beginning and end of these chains of edges agree as well; the angle at $M$ agrees with that at $f(2)$
and the angle at $C_0$ agrees with that at $f(u_0)$. So we can {\em typically} characterize the location of $S$
by extending lines out of the points $M$ and $C_0$ in the right directions. The point $S$ lies at the intersection of these lines.
In this case, because everything else agrees, the similarity must carry $S$ to $f(\infty)$. 
But it can happen that these two lines coincide. This happens when the interior angle at $S$ is a multiple of $\pi$. We compute that the 
angle at $S$ is given by $\frac{mn-m-n}{2n}\pi$. So, we get our similarity unless $m$ is an odd multiple of $n$. So, it
happens except at a finite list of values of $n \geq 2$. By continuity of the lengths of edges in both generalized polygons, 
we find that this similarity must exist even when $m$ is an odd multiple of $n$. 

By fiddling with the complex constant $K$ we can make it so that the immersed disks
$f(U)$ and the developed image of $D$ under $\eta^e_{m,n}$ differ by a translation.
We know these regions have the same generalized polygon for their boundaries.
Proposition \ref{prop:lift} implies that we can find the lift $\widetilde f$. 
\end{proof}

\section{Lattices which are not Veech groups}
\label{sect:not_veech}

The trace field of a subgroup $\Gamma \subset \PSL(2, \R)$ is the field
$\Q(\tr~\Gamma)=\Q(\tr~\gamma~:~ \gamma \in \Gamma)$. 
Note that the trace of an element of $\PSL(2, \R)$ is only defined up to sign, but these choices have no effect on the definition of this field.
Let $\Gamma^{(2)}=\langle \gamma^2 ~|~\gamma \in \Gamma\rangle$. This is a finite index subgroup of $\Gamma$.
The {\em invariant trace field of $\Gamma$} is the field $\Q(\tr~\Gamma^{(2)})$. To abbreviate notation we use $k \Gamma$ to denote
$\Q(\tr~\Gamma^{(2)})$.
The name is justified by the fact that if $\Gamma$ is finitely generated non-elementary group, 
then $k \Gamma'=k \Gamma$ for any finite index subgroup $\Gamma'  \subset \Gamma$.
In particular, $k \Gamma \subset \Q(\tr~\Gamma') \subset \Q(\tr~\Gamma)$. See \cite[\S 3]{MR03} for further background on this subject.

\begin{lemma}
\label{lem:trace_field}
Suppose that $\Gamma=\PSL(X, \omega)$ is finitely generated and non-elementary. Then $k \Gamma = \Q(\tr~\Gamma)$.
\end{lemma}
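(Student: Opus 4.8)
The plan is to prove the reverse inclusion $\Q(\tr\,\Gamma)\subseteq k\Gamma$, since $k\Gamma\subseteq\Q(\tr\,\Gamma)$ holds for any finitely generated non-elementary group as recalled just before the statement. First I would record two elementary reductions. Because $\gamma^2\in\Gamma^{(2)}$ and $\tr(\gamma^2)=(\tr\gamma)^2-2$, we get $(\tr\gamma)^2\in k\Gamma$ for every $\gamma$; hence $\Q(\tr\,\Gamma)$ is obtained from $k\Gamma$ by adjoining square roots, so $\Q(\tr\,\Gamma)/k\Gamma$ is (multi-)quadratic. Second, since $\Gamma/\Gamma^{(2)}$ has exponent two it is abelian, so $[\Gamma,\Gamma]\subseteq\Gamma^{(2)}$; feeding the commutator $[\gamma,\delta]=\gamma\delta\gamma^{-1}\delta^{-1}$ into the Fricke identity
\[\tr[\gamma,\delta]=(\tr\gamma)^2+(\tr\delta)^2+(\tr\gamma\delta)^2-\tr\gamma\,\tr\delta\,\tr(\gamma\delta)-2,\]
and using that the left side and all three squares lie in $k\Gamma$, yields $\tr\gamma\,\tr\delta\,\tr(\gamma\delta)\in k\Gamma$ for all $\gamma,\delta\in\Gamma$.

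Next I would invoke the hypothesis that $\Gamma$ is a Veech group. Every Veech group is non-cocompact by Veech's theorem, and the Veech groups to which this lemma is applied have the lattice property, so $\Gamma$ has cusps and therefore contains two non-commuting parabolics $P$ and $Q$ (take a parabolic $P$ and a conjugate $gPg^{-1}$ by some $g$ not fixing the cusp); this is also the structural hypothesis behind the ribbon-graph description in Theorem \ref{thm:graphs}. In $\PSL(2,\R)$ a parabolic has trace $\pm2\in\Q\subseteq k\Gamma$, so applying the displayed relation to the pair $P,Q$ collapses the product of three traces (two of which are $\pm2$) and forces $\tr(PQ)\in k\Gamma$. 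The last ingredient is that for a Veech surface the trace field $\Q(\tr\,\Gamma)$ is generated by the trace of a single hyperbolic element, and that this element may be taken to be the product $PQ$ of the two parabolics: this is exactly the pseudo-Anosov produced by Thurston's construction, whose trace is a rational function of the squared Perron--Frobenius eigenvalue $\lambda^2$ of Theorem \ref{thm:graphs} (see \cite{KS}). Granting this, $\Q(\tr\,\Gamma)=\Q(\tr(PQ))\subseteq k\Gamma$, closing the argument.

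The hard part is this last ingredient: showing the whole trace field already lives inside the two-parabolic subgroup, i.e. that no element contributes a genuinely new square root beyond those in $\Q(\tr(PQ))$. Phrased cohomologically, the relation $\tr\gamma\,\tr\delta\,\tr(\gamma\delta)\in k\Gamma$ says that $\gamma\mapsto[\tr\gamma]$ is a homomorphism from $\Gamma$ (on elements of nonzero trace) into the $2$-torsion of $\Q(\tr\,\Gamma)^\times/k\Gamma^\times$, and equality of the fields is equivalent to this homomorphism being trivial; the parabolics only kill the cusp classes, so one must rule out contributions from hyperbolic ``handle'' generators. This is precisely where the Veech hypothesis is indispensable, since for a general non-elementary Fuchsian group the conclusion is false: the arithmetic triangle group $\Delta^+(4,4,\infty)$ has $k\Gamma=\Q$ but $\Q(\tr\,\Gamma)=\Q(\sqrt2)$. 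Any complete proof must therefore exploit the translation-surface structure---the single-generator description of the trace field via \cite{KS}, together with the background on invariant trace fields in \cite{MR03}---rather than soft group theory alone.
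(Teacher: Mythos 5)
Your argument has a genuine gap at exactly the point you flag: the ``last ingredient,'' that $\Q(\tr\,\Gamma)$ is generated by the trace of a single hyperbolic element, is precisely the content of the Kenyon--Smillie theorem (theorem \ref{thm:KS}), and you never derive it---you only grant it. Moreover, the way you set up its application introduces two further defects. First, your route needs two non-commuting parabolics, i.e.\ the lattice property; but the lemma assumes only that $\Gamma$ is finitely generated and non-elementary, and a finitely generated non-elementary Fuchsian group (even a Veech group, which is never cocompact) need not contain any parabolics. Second, even when parabolics exist, the product of two non-commuting parabolics need not be hyperbolic: for the matrices $P_0,Q_0$ of equation \ref{eq:standard_matrices} one computes $\tr(P_0Q_0)=2-\lambda^2$, which is elliptic whenever $\lambda<2$ (this happens, e.g., for $\G_{2,3}$, where $\lambda=1$). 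So identifying $PQ$ with ``the pseudo-Anosov produced by Thurston's construction'' and feeding it into the single-hyperbolic-generator statement is unjustified as written; at a minimum you would have to replace $Q$ by a high power $Q^N$ to force $|\tr(PQ^N)|>2$.

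Once theorem \ref{thm:KS} is in hand, the entire detour through parabolics and the Fricke identity is unnecessary, and this is how the paper argues. Since $\Gamma$ is non-elementary it contains a hyperbolic element $B$; then $B^2\in\Gamma^{(2)}$ is again hyperbolic, so $\tr(B^2)\in k\Gamma$ by the very definition of the invariant trace field, and theorem \ref{thm:KS} applied to $A=B^2$ gives $\Q(\tr\,\Gamma)\subset\Q(\tr\,B^2)\subset k\Gamma$. Combined with the standard inclusion $k\Gamma\subset\Q(\tr\,\Gamma)$ recalled before the lemma, this closes the proof in three lines, with no cusps and no trace identities. Your preliminary reductions (that $\Q(\tr\,\Gamma)/k\Gamma$ is multi-quadratic, and that $\tr\gamma\,\tr\delta\,\tr(\gamma\delta)\in k\Gamma$) are correct but end up doing no work, while your example $\Delta^+(4,4,\infty)$, with $k\Gamma=\Q$ and $\Q(\tr\,\Gamma)=\Q(\sqrt2)$, is a good sanity check that the translation-surface hypothesis cannot be dropped.
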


This lemma follows from results of Kenyon and Smillie \cite{KS}.
Hubert and Schmidt realized that the following is implied by Theorem 28 of \cite{KS}.

\begin{theorem}[Kenyon-Smillie]
\label{thm:KS}
Let $(X, \omega)$ be a translation surface.
If $A \in \PSL(X, \omega)$ is hyperbolic, then $\PSL(X, \omega)$ is conjugate into $\PSL\big(2, \Q(\tr ~ A)\big)$.
\end{theorem}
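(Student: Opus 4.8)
The plan is to split the statement into an arithmetic input and a rigidity conclusion. Write $K_0=\Q(\tr A)$. The arithmetic input is the claim that $\tr B\in K_0$ for \emph{every} $B\in\PSL(X,\omega)$; granting this, the theorem follows from a standard fact about $\SL_2$: an irreducible subgroup $\Gamma\subset\SL(2,\R)$ (one fixing no point of $\partial\H^2$) is conjugate into $\SL(2,k)$, where $k$ is its trace field, because the $2\times 2$ trace identities $\tr(GH)+\tr(GH^{-1})=\tr G\,\tr H$ express all entries of a suitable conjugate in terms of traces of group elements. Since $\PSL(X,\omega)$ contains the hyperbolic $A$, it is non-elementary exactly when it is not virtually cyclic; in the non-elementary case it is irreducible and the rigidity applies with $k=K_0$. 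In the remaining virtually-cyclic case the group stabilizes the axis of $A$, and conjugating $A$ to its companion matrix $\bigl[\begin{smallmatrix}0&-1\\1&\tr A\end{smallmatrix}\bigr]$ already realizes the group over $K_0$. So the whole problem reduces to the arithmetic input.

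To prove $\tr B\in K_0$, I would exploit that affine automorphisms act $\Q$-linearly on homology compatibly with their linear action on $\R^2$. Let $\phi$ be an affine automorphism with $D(\phi)=A$; it is pseudo-Anosov. The period (holonomy) map $\mathrm{hol}:H_1(X,\Sigma;\Q)\otimes\R\to\R^2$ is equivariant, $\mathrm{hol}\circ(\phi_*\otimes 1)=A\circ\mathrm{hol}$, and likewise $\mathrm{hol}\circ(\psi_*\otimes1)=B\circ\mathrm{hol}$ for any $\psi$ with $D(\psi)=B$. The operator $\phi_*$ is defined over $\Q$, being an integral matrix on $H_1(X,\Sigma;\Z)$, and on absolute homology it preserves the intersection form, so its characteristic polynomial there is reciprocal. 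The eigenvalues $\lambda^{\pm1}$ of $A$ occur among its roots; hence $\lambda$ is an algebraic integer and $K_0=\Q(\lambda+\lambda^{-1})$ is the fixed field of the involution $\lambda\mapsto\lambda^{-1}$ of the (at most quadratic) extension $\Q(\lambda)/K_0$.

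The heart of the matter is to push every Veech trace into $K_0$. The mechanism is that all the operators $\psi_*$ live on one and the same rational vector space $H_1(X,\Sigma;\Q)$ and are tied to the geometric matrices $B$ through the single equivariant map $\mathrm{hol}$; the rational intersection pairing couples the expanding ($\lambda$) and contracting ($\lambda^{-1}$) data, and the resulting relations force the trace of each $B$ — a priori only a real number — to be expressible through $\lambda+\lambda^{-1}$ and rationals, i.e.\ to lie in $K_0$. Carrying out this bookkeeping carefully, and in particular checking that the trace of a \emph{single} hyperbolic element already generates the entire trace field, is precisely the content of Theorem 28 of \cite{KS}, which I would invoke to complete this step.

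The main obstacle is exactly this last step. The difficulty is that distinct affine automorphisms diagonalize along distinct directions, so different $B$'s correspond to different invariant splittings of $H_1(X,\Sigma;\Q)$; there is no common eigenbasis simultaneously trivializing all the traces. What saves the argument is the rigidity of the shared $\Q$-structure together with the reciprocal symplectic form, which confines every trace to the index-two fixed field $K_0$ rather than to the a priori larger eigenvalue field $\Q(\lambda)$. A secondary point requiring care is the verification of irreducibility needed for the rigidity conclusion; this is automatic once $\PSL(X,\omega)$ contains a hyperbolic element and is not virtually cyclic, the latter case being disposed of by the companion-matrix computation above.
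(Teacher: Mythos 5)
The paper does not actually prove theorem \ref{thm:KS}; it imports it, following Hubert and Schmidt, from theorem 28 of \cite{KS}. Your proposal also ends by invoking that theorem, so everything turns on the scaffolding you build around it, and that scaffolding has a genuine gap: the ``standard fact'' your reduction pivots on is false. For a non-elementary subgroup $\Gamma\subset\SL(2,\R)$ with all traces in a field $k$, the trace identities only show that $\Gamma$ sits inside the $k$-algebra $k[\Gamma]$, which is a quaternion algebra over $k$; $\Gamma$ is conjugate into $\SL(2,k)$ if and only if this algebra is isomorphic to $M_2(k)$, and it may instead be a division algebra. Concretely, the norm-one units of an order in a quaternion division algebra over $\Q$ split at the real place form a cocompact Fuchsian group with all traces in $\Z$ that is \emph{not} conjugate into $\SL(2,\Q)$. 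The paper itself trades on exactly this distinction: its corollary on non-Veech arithmetic triangle groups rests on groups commensurable with $\SL(2,\Z)$ failing to be conjugate into $\SL(2,\Q)$, and Schmith\"usen's theorem \ref{thm:schmithusen} is a nontrivial result precisely because trace data alone never yields rational conjugation. So your ``arithmetic input'' ($\tr~B\in K_0$ for all $B$) is strictly weaker than the conclusion of theorem \ref{thm:KS}, and your decomposition cannot close without a further argument splitting the quaternion algebra --- for instance a parabolic element, which a Veech group containing a hyperbolic need not possess.

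Relatedly, you invoke theorem 28 of \cite{KS} for the wrong statement. It does not assert that the traces of the Veech group lie in $K_0=\Q(\tr~A)$; it asserts --- and this is how \cite[remark 7]{HS01} and the present paper use it --- that an affine pseudo-Anosov with dilatation $\lambda$ forces the holonomy field of $(X,\omega)$ to equal $\Q(\lambda+\lambda^{-1})=K_0$, so that after a real-linear normalization all relative periods of $\omega$ lie in $K_0^2$. Since the period set contains a basis of $\R^2$ consisting of vectors with coordinates in $K_0$, and every derivative $D(\psi)$ of an affine automorphism permutes this $K_0$-rational set, the matrix \emph{entries} of every element of $\GL(X,\omega)$ --- not merely the traces --- lie in $K_0$ in this normalization, and the conjugacy assertion follows at once with no appeal to trace-field rigidity. (This also repairs your elementary case, where elements sharing the axis of $A$ can a priori have eigenvalues generating extensions of $K_0$.) In short, the correct use of the cited theorem delivers matrix entries directly and bypasses your middle step; as written, your reduction replaces the theorem's actual content with a weaker trace statement and then leans on a rigidity principle that fails.
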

This theorem was used in \cite[remark 7]{HS01} to show that $\Delta^+(2, m,\infty)$ can not 
arise as $\PSL(X, \omega)$ when $m$ is even.
The weaker statement of lemma \ref{lem:trace_field} is all that is necessary for our proof that certain triangle groups can not arise as a $\PSL(X, \omega)$,
and the full strength of this theorem of Kenyon and Smillie does not exclude any additional triangle groups. We have chosen the proof using lemma \ref{lem:trace_field}
because it yields a more conceptually natural proof.

\begin{proof}[Proof of Lemma \ref{lem:trace_field}]
Let $B \in \Gamma=\PSL(X, \omega)$ be hyperbolic. $B^2$ is also hyperbolic. 
Thus,
$$\Q(\tr~B^2) \subset k \Gamma \subset \Q(\tr~\Gamma) \subset \Q(\tr ~ B^2),$$
with the last containment following from applying theorem \ref{thm:KS} with $A=B^2$. 
\end{proof}

\begin{remark}[A second proof of lemma \ref{lem:trace_field}]
Another method of proving lemma \ref{lem:trace_field} would be to generalize work of Gutkin and Judge \cite{GJ00}.
Their work implies that if $\Gamma=\PSL(X, \omega)$ contains a finite index subgroup $\Gamma' \subset \Gamma$
which is conjugate into $\PSL(2, \Q)$ then $\Gamma$ must be conjugate into $\PSL(2, \Q)$ as well.
Their proof works with $\Q$ replaced by any subfield of $\R$. 
\end{remark}

With regard to triangle groups, we have the following.

\begin{lemma}
\label{lem:triangle_groups_fields}
Suppose $2 \leq m \leq n<\infty$, $n>2$, and let $\Gamma_{m,n}=\Delta^+(m,n,\infty) \subset \PSL(2, \R)$. Then $k\Gamma_{m,n}=\Q(\tr~\Gamma_{m,n})$ unless
one of the following statements holds.
\begin{enumerate}
\item $\gcd(m,n)=2$. 
\item Both $m$ and $n$ are even, and both $m/\gcd(m,n)$ and $n/\gcd(m,n)$ are odd.
\end{enumerate}
\end{lemma}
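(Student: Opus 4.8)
The plan is to identify both fields explicitly and then reduce the equality $k\Gamma_{m,n}=\Q(\tr\,\Gamma_{m,n})$ to a single question of field membership. Writing $\Gamma_{m,n}=\langle g,h\rangle$ with $g=ac$ and $h=cb$ in the standard reflection presentation, so that $g,h$ are elliptic of orders $m,n$ and $gh=ab$ is parabolic, the generator traces are $\tr g=2\cos\frac{\pi}{m}$, $\tr h=2\cos\frac{\pi}{n}$ and $\tr gh=\pm2$ (all well defined up to sign in $\PSL$, which does not affect the fields). The two-generator description of the trace field then gives $\Q(\tr\,\Gamma_{m,n})=\Q(\tr g,\tr h,\tr gh)=\Q(\cos\frac{\pi}{m},\cos\frac{\pi}{n})$, while the two-generator formula $k\langle g,h\rangle=\Q(\tr^2 g,\tr^2 h,\tr g\,\tr h\,\tr gh)$ of \cite[\S3]{MR03} gives $k\Gamma_{m,n}=\Q(\cos\frac{2\pi}{m},\cos\frac{2\pi}{n},\cos\frac{\pi}{m}\cos\frac{\pi}{n})$, using $4\cos^2\frac{\pi}{m}=2+2\cos\frac{2\pi}{m}$.

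Next I would set $F=\Q(\cos\frac{2\pi}{m},\cos\frac{2\pi}{n})$, so that $F\subseteq k\Gamma_{m,n}\subseteq\Q(\tr\,\Gamma_{m,n})$. Since $\cos^2\frac{\pi}{m}$ and $\cos^2\frac{\pi}{n}$ lie in $F$, the extension $\Q(\tr\,\Gamma_{m,n})/F$ is built by adjoining square roots, and an elementary argument with these quadratic extensions shows that (for $m\ge3$, where $\cos\frac{\pi}{m}\neq0$) $k\Gamma_{m,n}=\Q(\tr\,\Gamma_{m,n})$ iff $\cos\frac{\pi}{m}\in F$ or $\cos\frac{\pi}{n}\in F$; the degenerate case $m=2$ I would check by hand. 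This immediately disposes of the odd case: if $m$ (or $n$) is odd, a degree count ($\varphi(2m)=\varphi(m)$ for odd $m$, so $[\Q(\cos\frac{\pi}{m}):\Q]=[\Q(\cos\frac{2\pi}{m}):\Q]$) shows $\cos\frac{\pi}{m}\in\Q(\cos\frac{2\pi}{m})\subseteq F$, whence the two fields agree. Thus only the both-even case can be exceptional, matching the hypotheses of the two listed conditions.

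For the both-even case I would pass to cyclotomic Galois theory. Put $M=\operatorname{lcm}(2m,2n)$ and identify $\mathrm{Gal}(\Q(\zeta_M)/\Q)=(\Z/M)^\times$. For $d\mid M$ the real subfield $\Q(\zeta_d)^+$ is the fixed field of $G_d=\{t:t\equiv\pm1\pmod d\}$, so $F=\Q(\zeta_m)^+\Q(\zeta_n)^+$ is the fixed field of $G_m\cap G_n$, while $\cos\frac{\pi}{m}$ generates $\Q(\zeta_{2m})^+=\mathrm{Fix}(G_{2m})$. By the Galois correspondence, $\cos\frac{\pi}{m}\in F$ iff $G_m\cap G_n\subseteq G_{2m}$, which fails exactly when the system $t\equiv\pm1\pmod m$, $t\equiv\pm1\pmod n$, $t\equiv m\pm1\pmod{2m}$ admits a unit solution $t\in(\Z/M)^\times$. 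Splitting this by the Chinese Remainder Theorem into its $2$-adic and odd parts, the odd part forces the two sign choices to agree modulo the odd part of $\gamma=\gcd(m,n)$, while the $2$-adic part reduces to a short congruence analysis governed by the valuations $v_2(m),v_2(n)$ and the shift $t\equiv\pm1+2^{v_2(m)}\pmod{2^{v_2(m)+1}}$.

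Carrying out this bookkeeping for the $2m$- and $2n$-systems simultaneously should show that both memberships fail---so that $k\Gamma_{m,n}\neq\Q(\tr\,\Gamma_{m,n})$---precisely when $v_2(m)=v_2(n)$ (equivalently $m/\gamma$ and $n/\gamma$ are both odd, condition (2)) or when $\gamma=2$ (condition (1)), and that in every other both-even case one of $\cos\frac{\pi}{m},\cos\frac{\pi}{n}$ lies in $F$. I expect the main obstacle to be exactly this $2$-adic case analysis: tracking which sign choices $s_1,s_2\in\{\pm1\}$ are simultaneously admissible for the shifted $2$-power congruences and for the odd-part compatibility, and confirming that the two exceptional families are the only cases in which both systems are solvable. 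The trace-field identifications and the odd-part computations are routine; the delicate point is the interaction of the signs with the single factor of $2$ distinguishing $\Q(\zeta_d)^+$ from $\Q(\zeta_{2d})^+$.
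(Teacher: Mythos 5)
Your proposal is correct, and it matches the paper's skeleton (the \cite{MR03} formulas for $\Q(\tr~\Gamma_{m,n})$ and $k\Gamma_{m,n}$, the $\varphi(2m)=\varphi(m)$ degree count for the odd case, cyclotomic Galois theory for the both-even case), but your central reduction is genuinely different. The paper characterizes failure of $k\Gamma_{m,n}=\Q(\tr~\Gamma_{m,n})$ by the existence of a \emph{single} Galois automorphism negating $\cos\frac{\pi}{m}$ and $\cos\frac{\pi}{n}$ simultaneously (such an automorphism fixes their product, hence all of $k\Gamma_{m,n}$), which packages the even case into one solvability question and collapses to the single congruence $m-n+\epsilon\equiv 0\pmod{2\gamma}$, $\epsilon\in\{-2,0,2\}$. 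You instead prove equality holds iff $\cos\frac{\pi}{m}\in F$ or $\cos\frac{\pi}{n}\in F$, where $F=\Q(\cos\frac{2\pi}{m},\cos\frac{2\pi}{n})$ omits the product term; your quadratic-extension argument for this is sound and makes the two memberships independent, at the price of two congruence systems instead of one. Since you flagged the $2$-adic bookkeeping as the open point, note that it does close, and more easily than you fear: by the Chinese Remainder Theorem the $2m$-system is solvable iff $m\equiv 0,\pm 2\pmod{\gcd(2m,n)}$ (coprimality of $t$ to $M$ is automatic, as $m\pm1$ is prime to $2m$ and $\pm1$ to $n$), and symmetrically for the $2n$-system with modulus $\gcd(2n,m)$. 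If $v_2(m)=v_2(n)$ then both gcds equal $\gamma$, both conditions hold trivially since $\gamma$ divides $m$ and $n$, and you get case (2). If, say, $v_2(m)<v_2(n)$, then $\gcd(2n,m)=\gamma$ makes that condition automatic, while $\gcd(2m,n)=2\gamma$ and $m\equiv\gamma\pmod{2\gamma}$ (as $m/\gamma$ is odd), so the remaining condition $\gamma\equiv 0,\pm 2\pmod{2\gamma}$ holds iff $\gamma=2$, which is case (1). So your route delivers exactly the lemma's dichotomy: the paper's single-automorphism criterion buys a one-line congruence and shorter case analysis, while yours buys the transparent Galois-correspondence formulation ($G_m\cap G_n\subseteq G_{2m}$) and the structurally interesting fact that the two cosine memberships can be tested separately.
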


Given this lemma, the proof of theorem \ref{thm:non_veech_triangle_groups} of the introduction follows
by concatenating lemmas \ref{lem:trace_field} and \ref{lem:triangle_groups_fields}. The remainder of this section will be devoted to proving
lemma \ref{lem:triangle_groups_fields}. We will heavily use the book of Maclachlan and Reid \cite{MR03}.

The group $\Gamma_{m,n}=\Delta^+(m,n,\infty) \subset \PSL(2, \R)$ is generated by the projections of the following matrices to $\PSL(2, \R)$.
\begin{equation}
\label{eq:matrices1}
X=\left[\begin{array}{rr}
0 & -1 \\
1 & 2 \cos \frac{\pi}{m} \\
\end{array}\right]
\quad 
Y=\left[\begin{array}{rr}
-2 \cos \frac{\pi}{n} & 1 \\
-1 & 0
\end{array}\right]
\end{equation}
These matrices satisfy the identities $X^m=Y^n=-I$ (which projects to the identity in $\PSL(2, \R)$, while $XY$ is parabolic. 

The following follows from lemma 3.5.3 of \cite{MR03}.
\begin{proposition}[Trace field]
$\displaystyle \Q(\tr~\Gamma_{m,n})=\Q(\cos \frac{\pi}{m}, \cos \frac{\pi}{n})$. 
\end{proposition}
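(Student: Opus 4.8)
The plan is to establish the two inclusions separately, reducing the nontrivial direction to the standard fact that the trace field of a two-generator group is generated by the traces of $X$, $Y$, and $XY$ (this is the content of \cite[Lemma 3.5.3]{MR03}, whose hypotheses are met once we lift to $\SL$). First I would observe that the matrices in \eqref{eq:matrices1} both have determinant $1$, so $\langle X, Y\rangle \subset \SL(2,\R)$ is a lift of $\Gamma_{m,n}$; since traces in $\PSL(2,\R)$ are well defined up to sign, the field $\Q(\tr~\Gamma_{m,n})$ coincides with $\Q(\tr~\langle X,Y\rangle)$, the sign ambiguity being harmless because negation and multiplication by $2$ do not change the field generated.

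For the inclusion $\Q(\cos\frac{\pi}{m},\cos\frac{\pi}{n}) \subseteq \Q(\tr~\Gamma_{m,n})$, I would read off $\tr X = 2\cos\frac{\pi}{m}$ and $\tr Y = -2\cos\frac{\pi}{n}$ directly from the generators; dividing by $\pm 2$ places $\cos\frac{\pi}{m}$ and $\cos\frac{\pi}{n}$ in the trace field. For the reverse inclusion I would compute the one remaining generating trace: a direct multiplication gives
$$XY = \left[\begin{array}{rr} 1 & 0 \\ -2\cos\frac{\pi}{m} - 2\cos\frac{\pi}{n} & 1 \end{array}\right],$$
so $\tr~XY = 2$, consistent with the already-noted fact that $XY$ is parabolic. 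Invoking \cite[Lemma 3.5.3]{MR03}, every trace of an element of $\langle X,Y\rangle$ is an integer polynomial in $\tr X$, $\tr Y$, and $\tr~XY$, and hence lies in $\Q(\cos\frac{\pi}{m},\cos\frac{\pi}{n})$. Combining the two inclusions yields the claimed equality.

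The computation itself is entirely routine, so there is no genuine analytic obstacle; the only points requiring care are the bookkeeping between $\SL(2,\R)$ and $\PSL(2,\R)$ — in particular confirming that passing to traces up to sign neither enlarges nor shrinks the field — and the verification that $\tr~XY$ contributes nothing new, which holds precisely because the third vertex is a cusp so that $XY$ is parabolic with rational trace. Essentially all the mathematical content is imported from \cite[Lemma 3.5.3]{MR03}, and the proposition is really just the specialization of that lemma to the explicit generators $X$ and $Y$.
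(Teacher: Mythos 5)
Your proof is correct and follows exactly the paper's approach: the paper's proof is simply the citation of \cite[Lemma 3.5.3]{MR03} applied to the generators $X$ and $Y$ of equation \eqref{eq:matrices1}, and you have filled in the routine verification (the traces $2\cos\frac{\pi}{m}$, $-2\cos\frac{\pi}{n}$, and $\tr\,XY=2$) that the paper leaves implicit.
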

Note that if $m=2$, then $\Q(\tr~\Gamma_{m,n})=\Q(\cos \frac{\pi}{n})$. 
The following follows from lemmas 3.5.7 and 3.5.8 of \cite{MR03}.

\begin{proposition}[Invariant trace field]
 $\displaystyle k\Gamma_{m,n} = \Q(\cos \frac{2\pi}{m}, \cos \frac{2\pi}{n}, \cos \frac{\pi}{m} \cos \frac{\pi}{n})$. 
\end{proposition}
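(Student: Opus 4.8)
The plan is to apply the standard formula for the invariant trace field of a two-generator non-elementary Fuchsian group to the generators $X$ and $Y$ of equation \ref{eq:matrices1}, and then carry out the resulting trace computations. Since $\Gamma_{m,n}=\Delta^+(m,n,\infty)$ is a lattice, it is finitely generated and non-elementary, so the hypotheses of the relevant results of Maclachlan and Reid (lemmas 3.5.7 and 3.5.8 of \cite{MR03}) are met. Fixing lifts of $X,Y$ to $\SL(2,\R)$, these results give
$$k\Gamma_{m,n}=\Q\big(\tr^2 X,\ \tr^2 Y,\ \tr^2 XY,\ \tr X\,\tr Y\,\tr XY\big).$$
The inclusion $\supseteq$ of the right side in $k\Gamma_{m,n}$ is transparent: $X^2,Y^2,(XY)^2\in\Gamma_{m,n}^{(2)}$ contribute $\tr^2 X-2,\ \tr^2 Y-2,\ \tr^2 XY-2$, while the commutator lies in $\Gamma_{m,n}^{(2)}$ (the abelianization mod squares is elementary abelian), so the Fricke identity $\tr[X,Y]=\tr^2 X+\tr^2 Y+\tr^2 XY-\tr X\,\tr Y\,\tr XY-2$ places the triple product in $k\Gamma_{m,n}$ as well; the content of the cited lemmas is that these generate all of $k\Gamma_{m,n}$. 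Only even-degree symmetric combinations appear, so the expression is independent of the chosen $\SL(2,\R)$ lifts and is well defined for $\Gamma_{m,n}\subset\PSL(2,\R)$.

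Next I would record the three relevant traces. From equation \ref{eq:matrices1} one reads off $\tr X=2\cos\frac{\pi}{m}$ and $\tr Y=-2\cos\frac{\pi}{n}$. A direct multiplication gives
$$XY=\left[\begin{array}{rr} 1 & 0 \\ -2\cos\frac{\pi}{m}-2\cos\frac{\pi}{n} & 1 \end{array}\right],$$
which confirms that $XY$ is parabolic and that $\tr XY=2$.

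Substituting and simplifying is then routine. From $\tr^2 X=4\cos^2\frac{\pi}{m}=2+2\cos\frac{2\pi}{m}$ we get $\Q(\tr^2 X)=\Q(\cos\frac{2\pi}{m})$, and likewise $\Q(\tr^2 Y)=\Q(\cos\frac{2\pi}{n})$; the generator $\tr^2 XY=4$ is rational and contributes nothing. Finally $\tr X\,\tr Y\,\tr XY=-8\cos\frac{\pi}{m}\cos\frac{\pi}{n}$, which adjoins $\cos\frac{\pi}{m}\cos\frac{\pi}{n}$. Assembling these yields $k\Gamma_{m,n}=\Q(\cos\frac{2\pi}{m},\cos\frac{2\pi}{n},\cos\frac{\pi}{m}\cos\frac{\pi}{n})$, as claimed. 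The only real obstacle is citing the Maclachlan--Reid formula in exactly the right form and keeping track of degenerate traces: when $m=2$ one has $\tr X=0$, so the triple-product generator vanishes, and one must retain $\tr^2 XY$ (here equal to $4$) among the generators for the formula to still produce the correct field; a quick check shows the specialization $\Q(\cos\pi,\cos\frac{2\pi}{n},0)=\Q(\cos\frac{2\pi}{n})$ agrees with the general expression.
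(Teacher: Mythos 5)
Your proposal is correct and takes essentially the same approach as the paper: the paper proves this proposition simply by citing lemmas 3.5.7 and 3.5.8 of \cite{MR03}, and your argument just carries out the trace computations ($\tr X = 2\cos\frac{\pi}{m}$, $\tr Y = -2\cos\frac{\pi}{n}$, $\tr XY = 2$, so $\tr^2 X$, $\tr^2 Y$ and $\tr X\,\tr Y\,\tr XY$ generate exactly $\Q(\cos\frac{2\pi}{m}, \cos\frac{2\pi}{n}, \cos\frac{\pi}{m}\cos\frac{\pi}{n})$) that make this citation work. The extra details you supply --- the Fricke-identity justification that each generator lies in $k\Gamma_{m,n}$, independence of the $\SL(2,\R)$ lifts, and the degenerate check at $m=2$ --- are all sound and are simply left implicit in the paper.
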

Note that in the special case that $m=2$, we have $k\Gamma_{m,n} = \Q(\cos \frac{2\pi}{n})$. 

\begin{proposition}
$[\Q(\tr~\Gamma_{m,n}):k\Gamma_{m,n}] \leq 2$. Let $p,q \in k\Gamma_{m,n}[x]$ denote the polynomials
$$p(x)=2x^2+(\cos \frac{2\pi}{m}-1)
\quad \textrm{and} \quad
q(x)=2x^2+(\cos \frac{2\pi}{n}-1).$$
\begin{enumerate}
\item In the case $m=2$, $\Q(\tr~\Gamma_{m,n})$ is the splitting field of $q$.
\item Otherwise, $\Q(\tr~\Gamma_{m,n})$ is both the splitting field for $p$ and the splitting field for $q$.
\end{enumerate}
\end{proposition}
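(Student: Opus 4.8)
The claim has two parts: an index bound $[\Q(\tr~\Gamma_{m,n}):k\Gamma_{m,n}] \leq 2$, and a description of $\Q(\tr~\Gamma_{m,n})$ as the splitting field of the explicit quadratics $p$ and $q$ over $k\Gamma_{m,n}$. The plan is to work entirely with the explicit field generators provided by the two previous propositions, namely $\Q(\tr~\Gamma_{m,n})=\Q(\cos \frac{\pi}{m}, \cos \frac{\pi}{n})$ and $k\Gamma_{m,n} = \Q(\cos \frac{2\pi}{m}, \cos \frac{2\pi}{n}, \cos \frac{\pi}{m} \cos \frac{\pi}{n})$, and to exploit the double-angle identity $\cos \frac{2\pi}{m}=2\cos^2\frac{\pi}{m}-1$.

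\textbf{The roots of $p$ and $q$.}
First I would observe that the roots of $p(x)=2x^2+(\cos \frac{2\pi}{m}-1)$ are exactly $x=\pm\cos\frac{\pi}{m}$: substituting the double-angle identity gives $\cos\frac{2\pi}{m}-1 = 2\cos^2\frac{\pi}{m}-2$, so $p(x)=2x^2-2\cos^2\frac{\pi}{m}=2(x-\cos\frac{\pi}{m})(x+\cos\frac{\pi}{m})$. Likewise $q$ has roots $\pm\cos\frac{\pi}{n}$. Hence the splitting field of $p$ over $k\Gamma_{m,n}$ is $k\Gamma_{m,n}(\cos\frac{\pi}{m})$ and the splitting field of $q$ is $k\Gamma_{m,n}(\cos\frac{\pi}{n})$. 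The entire problem is therefore reduced to the chain of field equalities
\begin{equation*}
k\Gamma_{m,n}(\cos\tfrac{\pi}{m}) = \Q(\cos\tfrac{\pi}{m},\cos\tfrac{\pi}{n}) = k\Gamma_{m,n}(\cos\tfrac{\pi}{n}),
\end{equation*}
together with the statement that each single adjunction has degree at most $2$ (degree at most $2$ is immediate since each generator satisfies a monic quadratic over $k\Gamma_{m,n}$, which also yields the index bound).

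\textbf{The key containments.}
For the index bound and the inclusion $k\Gamma_{m,n}(\cos\frac{\pi}{m}) \subseteq \Q(\tr~\Gamma_{m,n})$, note $\cos\frac{\pi}{m}\in\Q(\tr~\Gamma_{m,n})$ trivially, so I only need the reverse: that adjoining $\cos\frac{\pi}{m}$ to $k\Gamma_{m,n}$ already recovers $\cos\frac{\pi}{n}$. Here is the crucial trick: $k\Gamma_{m,n}$ contains the product $\cos\frac{\pi}{m}\cos\frac{\pi}{n}$ by the invariant-trace-field formula. Once $\cos\frac{\pi}{m}$ has been adjoined and is nonzero (which holds since $m\geq 3$ in the generic case, forcing $\cos\frac{\pi}{m}\neq 0$), I can divide: $\cos\frac{\pi}{n}=(\cos\frac{\pi}{m}\cos\frac{\pi}{n})/\cos\frac{\pi}{m}\in k\Gamma_{m,n}(\cos\frac{\pi}{m})$. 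This shows $\Q(\tr~\Gamma_{m,n})\subseteq k\Gamma_{m,n}(\cos\frac{\pi}{m})$, and symmetrically $\Q(\tr~\Gamma_{m,n})\subseteq k\Gamma_{m,n}(\cos\frac{\pi}{n})$. Combined with the trivial reverse inclusions, both splitting fields equal $\Q(\tr~\Gamma_{m,n})$, proving part (2). For part (1), the case $m=2$, I use instead the specialized formulas $\Q(\tr~\Gamma_{m,n})=\Q(\cos\frac{\pi}{n})$ and $k\Gamma_{m,n}=\Q(\cos\frac{2\pi}{n})$; the same factorization shows $q$ splits over $k\Gamma_{m,n}$ precisely upon adjoining $\cos\frac{\pi}{n}$, giving the splitting-field statement directly (and $p$ plays no role since $\cos\frac{\pi}{2}=0$ makes $p(x)=2x^2-2=2(x-1)(x+1)$ split trivially over $\Q$).

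\textbf{Expected main obstacle.}
The genuinely delicate point is the division step, which requires $\cos\frac{\pi}{m}\neq 0$; this fails only at $m=2$, which is exactly why that case is split off and handled with its own formulas. I would double-check that in the $m\geq 3$ regime the argument is symmetric in $m$ and $n$ so that both splitting-field claims follow, and confirm that the index $[\Q(\tr~\Gamma_{m,n}):k\Gamma_{m,n}]$ is at most $2$ and not forced to be exactly $2$ (it can collapse to $1$ when $\cos\frac{\pi}{m}$ already lies in $k\Gamma_{m,n}$). The surrounding lemma \ref{lem:triangle_groups_fields} then decides exactly when the index is $1$ versus $2$, but that analysis lies beyond this proposition.
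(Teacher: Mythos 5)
Your proof is correct and follows essentially the same route as the paper: use the double-angle identity to see that $\pm\cos\frac{\pi}{m}$, $\pm\cos\frac{\pi}{n}$ are the roots of $p$, $q$, then use the membership $\cos\frac{\pi}{m}\cos\frac{\pi}{n}\in k\Gamma_{m,n}$ together with division by the nonzero root (valid precisely when $m\neq 2$) to show that adjoining either root recovers all of $\Q(\tr~\Gamma_{m,n})$, with the $m=2$ case handled by the specialized formulas. Your write-up merely makes explicit the factorization and nonvanishing details that the paper leaves implicit.
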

\begin{proof}
By the double angle formula, $\cos \frac{\pi}{m}$ and $\cos \frac{\pi}{n}$ are roots of $p$ and $q$, respectively. 
Combined with the knowledge that $\Q(\tr~\Gamma_{2,n}) = \Q(\cos \frac{\pi}{n})$ and $k\Gamma_{2,n} = \Q(\cos \frac{2\pi}{n})$,
this implies statement (1). Note that $\cos \pi/m \cos \pi/n \in k \Gamma_{m,n}$. Thus when $m \neq 2$, $\cos \frac{\pi}{m} \in k \Gamma_{m,n}$
implies $\cos \frac{\pi}{n} \in k \Gamma_{m,n}$, and vice versa. This implies statement (2).
\end{proof}

We break the remainder of the proof of lemma \ref{lem:triangle_groups_fields} into special cases.

\begin{corollary}[The case $m=2$]
\label{cor:m2}
$k\Gamma_{2,n}=\Q(\tr~\Gamma_{2,n})$ if and only if $n$ is odd.
\end{corollary}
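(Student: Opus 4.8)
The plan is to reduce the statement to a comparison of two real cyclotomic fields and then to a single degree computation. Setting $m=2$ in the trace field and invariant trace field propositions above gives $\Q(\tr~\Gamma_{2,n})=\Q(\cos\frac{\pi}{n})$ and $k\Gamma_{2,n}=\Q(\cos\frac{2\pi}{n})$. Since $\cos\frac{2\pi}{n}=2\cos^2\frac{\pi}{n}-1$, the invariant trace field is a subfield of the trace field, and the preceding proposition bounds the index by $2$. Thus $k\Gamma_{2,n}=\Q(\tr~\Gamma_{2,n})$ holds precisely when $\cos\frac{\pi}{n}\in\Q(\cos\frac{2\pi}{n})$, so everything comes down to deciding whether this index is $1$ or $2$.

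To settle that, I would invoke the standard description of these fields as maximal totally real subfields of cyclotomic fields: for $N\ge 3$, the field $\Q(\cos\frac{2\pi}{N})$ is the maximal real subfield of $\Q(\zeta_N)$ and has degree $\varphi(N)/2$ over $\Q$. Writing $\cos\frac{\pi}{n}=\cos\frac{2\pi}{2n}$ and using $n>2$, this yields $[\Q(\cos\frac{\pi}{n}):\Q]=\varphi(2n)/2$ and $[\Q(\cos\frac{2\pi}{n}):\Q]=\varphi(n)/2$, hence
\[
[\Q(\tr~\Gamma_{2,n}):k\Gamma_{2,n}]=\frac{\varphi(2n)}{\varphi(n)}.
\]
Because $\varphi(2n)=\varphi(n)$ when $n$ is odd while $\varphi(2n)=2\varphi(n)$ when $n$ is even, the index equals $1$ exactly when $n$ is odd, which is the assertion of the corollary.

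If one prefers to avoid the cyclotomic degree formula, the ``$n$ odd'' direction can be done by hand: when $n$ is odd, $n-1$ is even, so $\cos\frac{\pi}{n}=-\cos\frac{(n-1)\pi}{n}=-\cos\!\big(\tfrac{n-1}{2}\cdot\frac{2\pi}{n}\big)$ is a Chebyshev-polynomial value in $\cos\frac{2\pi}{n}$ and therefore lies in $k\Gamma_{2,n}$; combined with the index bound $\le 2$ this already forces equality. I do not expect any genuine obstacle in this proof — the only care needed is to quote the correct degree $\varphi(N)/2$ and the elementary fact $\varphi(2n)/\varphi(n)\in\{1,2\}$. Indeed, since the previous proposition guarantees the index is at most $2$, for even $n$ it is enough to produce any strict inequality of degrees, so even a rough count closes the argument.
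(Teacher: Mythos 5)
Your proof is correct and is essentially the paper's own argument: both identify $\Q(\cos\frac{\pi}{n})$ and $\Q(\cos\frac{2\pi}{n})$ as the maximal real subfields of the cyclotomic fields $\Q(e^{i\pi/n})$ and $\Q(e^{2i\pi/n})$, and compute the index as $\varphi(2n)/\varphi(n)$, which is $1$ exactly when $n$ is odd. The Chebyshev-polynomial remark is a pleasant extra but not needed; the degree count already settles both directions.
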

\begin{proof}
Let $\zeta_n=e^{i \pi/n}$. We have that $[\Q(\zeta_{n}^2):k\Gamma_{2,n}]=[\Q(\zeta_n):\Q(\tr~\Gamma_{2,n})]=2$,
as our fields of interest are the real subfields of cyclotomic fields. We note that 
$$[\Q(\zeta_{n}^2):\Q]=\varphi(n) 
\quad \textrm{and} \quad
[\Q(\zeta_{n}):\Q]=\varphi(2n),$$
where $\varphi$ denotes the Euler $\varphi$ function. From the standard formula for $\varphi$, we have
that
$$[\Q(\tr~\Gamma_{2,n}):k\Gamma_{2,n}]=\frac{\varphi(2n)}{\varphi(n)}=\begin{cases}
1 & \textrm{if $n$ is odd} \\
2 & \textrm{if $n$ is even.}
\end{cases}$$
\vspace{-2em}\\
\end{proof}

\begin{corollary}[The odd case]
If either $m$ or $n$ is odd, then $k\Gamma_{m,n}=\Q(\tr~\Gamma_{m,n})$.
\end{corollary}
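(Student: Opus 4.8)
The goal is to show that the extension $k\Gamma_{m,n} \subseteq \Q(\tr~\Gamma_{m,n})$, which the previous proposition guarantees has degree at most two, is actually trivial. Since $\Q(\tr~\Gamma_{m,n})=\Q(\cos\frac{\pi}{m},\cos\frac{\pi}{n})$, it suffices to show that $\cos\frac{\pi}{m}$ and $\cos\frac{\pi}{n}$ both lie in $k\Gamma_{m,n}$. Because $m>2$, the previous proposition tells us that $\cos\frac{\pi}{m}\in k\Gamma_{m,n}$ if and only if $\cos\frac{\pi}{n}\in k\Gamma_{m,n}$, so it is enough to place \emph{one} of these cosines in $k\Gamma_{m,n}$. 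The plan is to use the odd index to do exactly this, feeding on the fact that $k\Gamma_{m,n}$ already contains $\cos\frac{2\pi}{m}$ and $\cos\frac{2\pi}{n}$.

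The key elementary input is: \emph{if $k$ is an odd integer then $\cos\frac{\pi}{k}\in\Q(\cos\frac{2\pi}{k})$.} One way to see this is that $\Q(\zeta_k)=\Q(\zeta_{2k})$ for $k$ odd, so these cyclotomic fields have the same maximal real subfield, namely $\Q(\cos\frac{2\pi}{k})=\Q(\cos\frac{\pi}{k})$. Concretely, since $k$ is odd the quantity $\frac{k+1}{2}$ is an integer and
$$\cos\frac{\pi}{k}=-\cos\Big(\pi+\frac{\pi}{k}\Big)=-\cos\Big(\frac{k+1}{2}\cdot\frac{2\pi}{k}\Big),$$
and by the Chebyshev recursion the right-hand side is a polynomial with integer coefficients in $\cos\frac{2\pi}{k}$.

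Now I would assemble the corollary. Suppose first that $m$ is odd. Then by the input above $\cos\frac{\pi}{m}\in\Q(\cos\frac{2\pi}{m})\subseteq k\Gamma_{m,n}$, and since $m>2$ the previous proposition forces $\cos\frac{\pi}{n}\in k\Gamma_{m,n}$ as well. Hence $\Q(\tr~\Gamma_{m,n})=\Q(\cos\frac{\pi}{m},\cos\frac{\pi}{n})\subseteq k\Gamma_{m,n}$, and combined with the always-valid inclusion $k\Gamma_{m,n}\subseteq\Q(\tr~\Gamma_{m,n})$ this gives equality. If instead $n$ is odd, the identical argument with the roles of $m$ and $n$ interchanged (invoking the ``vice versa'' direction of the previous proposition) first places $\cos\frac{\pi}{n}$ in $k\Gamma_{m,n}$ and then $\cos\frac{\pi}{m}$, again yielding equality.

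There is no serious obstacle here: the whole corollary is formal once the previous two propositions are in hand, and the only genuinely arithmetic point is the cyclotomic lemma $\cos\frac{\pi}{k}\in\Q(\cos\frac{2\pi}{k})$ for odd $k$. I would make sure to state that cleanly, since it is precisely what converts the hypothesis ``$m$ or $n$ odd'' into the containment of some $\cos\frac{\pi}{\cdot}$ in the invariant trace field, after which the restriction $m>2$ propagates the containment to the other cosine.
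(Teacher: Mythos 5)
Your proof is correct and takes essentially the same route as the paper's: both hinge on the key fact that $\cos\frac{\pi}{k}\in\Q(\cos\frac{2\pi}{k})$ when $k$ is odd (equivalently, that the quadratic $p$ or $q$ already splits over the invariant trace field), and both then use the hypothesis $m>2$ through the previous proposition to propagate the containment to the other cosine and conclude $\Q(\tr~\Gamma_{m,n})\subseteq k\Gamma_{m,n}$. The only difference is cosmetic: the paper justifies the key fact by citing the Euler-$\varphi$ degree computation from the $m=2$ corollary, while you justify it via $\Q(\zeta_k)=\Q(\zeta_{2k})$ for odd $k$ together with an explicit Chebyshev identity.
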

\begin{proof}
If $m$ is odd then the polynomial $p$ splits over $\Q(\cos 2\pi/m)$ and hence over $k\Gamma_{m,n}$.
(A proof equivalent to that of corollary \ref{cor:m2} applies.)
Similarly, when $n$ is odd, $q$ splits over $\Q(\cos 2\pi/n)$.
\end{proof}

The following finishes the proof of lemma \ref{lem:triangle_groups_fields}.

\begin{proposition}[The even case]
Assume $m>2$ and both $m$ and $n$ are even. Then $k\Gamma_{m,n}=\Q(\tr~\Gamma_{m,n})$
unless $\gcd(m,n)=2$ or both $m/\gcd(m,n)$ and $n/\gcd(m,n)$ are odd.
\end{proposition}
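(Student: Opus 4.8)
The plan is to decide exactly when $\cos\frac{\pi}{m}$ lies in $k\Gamma_{m,n}$. First I would record that, since $k\Gamma_{m,n}$ contains $\cos\frac{2\pi}{m}$, $\cos\frac{2\pi}{n}$ and the product $\cos\frac{\pi}{m}\cos\frac{\pi}{n}$, and since $\cos\frac{\pi}{m}\neq 0$ (as $m>2$), the field $k\Gamma_{m,n}(\cos\frac{\pi}{m})$ also contains $\cos\frac{\pi}{n}$ and hence equals $\Q(\tr~\Gamma_{m,n})=\Q(\cos\frac{\pi}{m},\cos\frac{\pi}{n})$. As $[\Q(\tr~\Gamma_{m,n}):k\Gamma_{m,n}]\leq 2$ by the previous proposition, this reduces the claim to the statement that $\cos\frac{\pi}{m}\in k\Gamma_{m,n}$ if and only if $\gamma\neq 2$ and not both $m/\gamma,n/\gamma$ are odd, where $\gamma=\gcd(m,n)$.

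Next I would set $L=\mathrm{lcm}(m,n)$ and work inside the cyclotomic field $\Q(\zeta_{2L})$, which is Galois over $\Q$ with group $(\Z/2L)^\times$ and contains $k\Gamma_{m,n}$ together with $\cos\frac{\pi}{m}$ (note $2m\mid 2L$). By Galois theory, $\cos\frac{\pi}{m}\notin k\Gamma_{m,n}$ precisely when some $g\in(\Z/2L)^\times$ fixes $k\Gamma_{m,n}$ — equivalently fixes each of $\cos\frac{2\pi}{m}$, $\cos\frac{2\pi}{n}$ and $\cos\frac{\pi}{m}\cos\frac{\pi}{n}$ — while moving $\cos\frac{\pi}{m}$. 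The action is explicit: $g$ fixes $\cos\frac{2\pi}{m}=\frac{1}{2}(\zeta_m+\zeta_m^{-1})$ iff $g\equiv\pm1\pmod{m}$, and for such $g$ one has $g(\cos\frac{\pi}{m})=\delta_m\cos\frac{\pi}{m}$ with $\delta_m\in\{\pm1\}$, where $\delta_m=-1$ exactly when $g\equiv m\pm1\pmod{2m}$ (the extra factor of two is where the essential arithmetic lives). Likewise for $n$. Since $g(\cos\frac{\pi}{m}\cos\frac{\pi}{n})=\delta_m\delta_n\cos\frac{\pi}{m}\cos\frac{\pi}{n}$ and this product is nonzero, such a $g$ fixes the product but moves $\cos\frac{\pi}{m}$ exactly when $\delta_m=\delta_n=-1$.

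This turns the problem into a congruence: $\cos\frac{\pi}{m}\notin k\Gamma_{m,n}$ iff there is a unit $g$ modulo $2L$ with $g\equiv m+\epsilon\pmod{2m}$ and $g\equiv n+\epsilon'\pmod{2n}$ for some signs $\epsilon,\epsilon'\in\{\pm1\}$. I would check that any solution is automatically coprime to $2L$ (it is odd, and $\equiv\pm1$ modulo every prime dividing $m$ or $n$), so by the Chinese remainder theorem for the moduli $2m,2n$, whose greatest common divisor is $2\gamma$, solvability is equivalent to $m+\epsilon\equiv n+\epsilon'\pmod{2\gamma}$, i.e. $m-n\equiv\epsilon'-\epsilon\pmod{2\gamma}$ with $\epsilon'-\epsilon\in\{0,\pm2\}$.

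Finally I would run the parity analysis. Because $m\equiv 0$ or $\gamma\pmod{2\gamma}$ according as $m/\gamma$ is even or odd (and similarly for $n$), and since $\gcd(m/\gamma,n/\gamma)=1$ forbids both from being even, one gets $m-n\equiv 0\pmod{2\gamma}$ when $m/\gamma,n/\gamma$ are both odd and $m-n\equiv\gamma\pmod{2\gamma}$ when exactly one is odd. In the first situation $0\in\{0,\pm2\}$ makes the congruence always solvable; in the second it is solvable iff $\gamma\equiv\pm2\pmod{2\gamma}$, i.e. iff $\gamma=2$. Hence $\cos\frac{\pi}{m}\notin k\Gamma_{m,n}$ — equivalently $k\Gamma_{m,n}\neq\Q(\tr~\Gamma_{m,n})$ — exactly when both $m/\gamma,n/\gamma$ are odd or $\gamma=2$, which is the assertion. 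I expect the main obstacle to be the product condition in the second paragraph: correctly extracting, from the requirement that $g$ fix $\cos\frac{\pi}{m}\cos\frac{\pi}{n}$, the clean sign condition $\delta_m=\delta_n$ while carefully tracking information modulo $2m$ against information modulo $m$; once that is in place, the concluding number-theoretic bookkeeping with $\gamma$ is routine.
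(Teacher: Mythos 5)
Your proposal is correct and takes essentially the same route as the paper's proof: both work in the cyclotomic field $\Q(\zeta_{2\,\mathrm{lcm}(m,n)})$, characterize the relevant Galois automorphisms by the congruences $g \equiv m\pm1 \pmod{2m}$ and $g \equiv n\pm1 \pmod{2n}$, reduce solvability via the Chinese remainder theorem to $m-n\equiv\epsilon\pmod{2\gamma}$ with $\epsilon\in\{0,\pm2\}$, and finish with the identical parity analysis. Your only departures are cosmetic: you phrase the degree-two question as whether $\cos\frac{\pi}{m}\in k\Gamma_{m,n}$ rather than via the unique nontrivial automorphism of $\Q(\tr~\Gamma_{m,n})$ over $k\Gamma_{m,n}$, and you make explicit the coprimality check $\gcd(g,2L)=1$ that the paper leaves implicit.
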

\begin{proof}
Let $x=mn/\gamma$ and $\zeta=e^\frac{2\pi i}{2x}$, so $\zeta$ is a $2x$-th root of unity. 
We consider the cyclotomic field $\Q(\zeta)$. Note that
$$2 \cos \frac{\pi}{m}=\zeta^{x/m}+\zeta^{-x/m}
\quad \textrm{and} \quad
2 \cos \frac{\pi}{n}=\zeta^{x/n}+\zeta^{-x/n}.$$
The Galois automorphisms of $\Q(\zeta)$ over $\Q$ are all induced by $\zeta \mapsto \zeta^k$ where 
$k$ is an integer with $\gcd(k,2x)=1$ and $1 \leq k < 2x$. We denote this Galois automorphism
by $\sigma_k$. 

The field $\Q(\zeta)$ contains both $k\Gamma_{m,n}$ and $\Q(\tr~\Gamma_{m,n})$. 
Assume that $k\Gamma_{m,n} \neq \Q(\tr~\Gamma_{m,n})$. 
Then, we know
$[\Q(\tr~\Gamma_{m,n}):k\Gamma_{m,n}]=2$. Hence, there is a unique non-trivial Galois automorphism
$\sigma \in \mathit{Aut}_{k\Gamma_{m,n}}\Q(\tr~\Gamma_{m,n})$ \cite[corollary V.4.3]{Hu74}. 
By the fundamental theorem of Galois theory, this $\sigma$ is the restriction of a Galois
automorphism of $\Q(\zeta)$ over $\Q$. In particular, it must be that
$\sigma=\sigma_k|_{\Q(\tr~\Gamma_{m,n})}$ for some $k$ with $\gcd(k,2x)=1$ and $1 \leq k < 2x$.
Such an automorphism must be an involution satisfying
\begin{equation}
\label{eq:galois_action}
\sigma_k(\cos \frac{\pi}{m})=-\cos  \frac{\pi}{m}
\quad \textrm{and} \quad
\sigma_k(\cos \frac{\pi}{n})=-\cos  \frac{\pi}{n},
\end{equation}
as it must act transitively on the roots of both $p$ and $q$. Note that any $\sigma_k$ satisfying
equation \ref{eq:galois_action} fixes all elements of $k\Gamma_{m,n}$, but acts non-trivially on elements
of $\Q(\tr~\Gamma_{m,n})$. In particular, $k\Gamma_{m,n} \neq \Q(\tr~\Gamma_{m,n})$
if and only if there is a Galois automorphism satisfying equation \ref{eq:galois_action}.

Consider the set of all $k$ for which $\sigma_k(2 \cos \frac{\pi}{m})=-2 \cos \frac{\pi}{m}$. This is equivalent to saying
that $\sigma_k(\zeta^{x/m})=\zeta^{x\pm x/m}$. In particular, this implies that
$$k \equiv \frac{x \pm x/m+2ax}{x/m} \pmod{2x}$$
for some integer $a$. Similarly, $\sigma_k(2 \cos \frac{\pi}{n})=-2 \cos \frac{\pi}{n}$ implies
$$k \equiv \frac{x \pm x/n+2bx}{x/n} \pmod{2x}$$
for some $b$. By simplifying, we see this is equivalent to the conditions that
$$k \equiv m \pm 1+2am \pmod{2x}
\quad \textrm{and} \quad
k \equiv n \pm 1+2bn \pmod{2x}.$$
The existence of such a $k$ is equivalent to the statement that there are choices of $a, b \in \Z$ and $\epsilon \in \{-2, 0, 2\}$ for which
$$m-n+\epsilon \equiv 2bn-2am \pmod{2x}.$$
Since $\gamma=\gcd(m,n)$, the right hand side can be any even multiple of $\gamma$. Therefore, this is equivalent to the statement that
\begin{equation}
\label{eq:number_theory}
m-n+\epsilon \equiv 0 \pmod{2 \gamma}.
\end{equation}

We will now check to see when equation \ref{eq:number_theory} holds for various choices of $\epsilon$. 
First assume $\epsilon=0$, then everything is divisible
by $\gamma$, so this equation is equivalent to 
$$
m/\gamma-n/\gamma \equiv 0 \pmod{2}.
$$
This equation is true if and only if both $m/\gamma$ and $n/\gamma$ are odd.

Now assume $\epsilon=\pm 2$. Notice that $m-n$ is always a multiple of $\gamma$. In particular, either
$$m-n \equiv 0 \pmod{2\gamma} 
\quad \textrm{or} \quad
m-n \equiv \gamma \pmod{2\gamma}.$$
As $\epsilon=\pm 2$, the only possible values of $m-n+\epsilon$ are $\pm 2$ or $\gamma \pm 2$ modulo
$2 \gamma$. In particular for equation \ref{eq:number_theory} to be true, we must have 
$\gamma=2$. In this case, there is always a choice of $\epsilon \in \{-2, 0, 2\}$ which makes the equation true.
We choose $\epsilon=0$ if both $m/\gamma$ and $n/\gamma$ are odd, and $\epsilon=\pm 2$ otherwise. (The choice of sign is irrelevant in this case.)
\end{proof}

\vspace{1em}
\noindent
{\bf Acknowledgments. }
Many helpful conversations occurred at MSRI's workshop on ``Topics in Teichm\"uller Theory and Kleinian Groups'' held in November, 2007.
The author would like to thank Matt Bainbridge, Pascal Hubert, Martin M\"oller, Yaroslav Vorobets, and Barak Weiss for these helpful conversations.
The author would especially like to thank John Smillie for realizing that a relatively simple presentation of these surfaces should be possible.
The author thanks Curt McMullen for many helpful comments and for clarifying the history of many ideas appearing here, and Ronen Mukamel for asking helpful questions. Finally, the author would like to thank Anja Randecker and an anonymous referee for pointing out some errors in previous versions.

\bibliographystyle{amsalpha}
\bibliography{/home/pat/active/my_papers/bibliography}

\end{document}